\documentclass[a4paper]{article}

\usepackage{amsmath}
\usepackage{amsfonts}
\usepackage{amsthm}
\usepackage{amssymb}
\usepackage{url}
\usepackage[usenames,dvips]{color}
\usepackage{stmaryrd}
\usepackage[dvips]{graphicx}
\usepackage{psfrag, float}
\usepackage[colorlinks,pdfmark,dvips]{hyperref}
\usepackage{comment}



\usepackage{graphicx}

\newtheorem{theorem}{Theorem}[section]

\newtheorem{corollary}[theorem]{Corollary}

\newtheorem{definition}[theorem]{Definition}

\newtheorem{lemma}[theorem]{Lemma}

\newtheorem{proposition}[theorem]{Proposition}
\newtheorem{remark}[theorem]{Remark}



\def\N{\mathbb{N}}

\def\RR{\mathbb{R}}
\def\C{\mathbb{C}}
\def\exp{\text{exp}}
\def\ee{\text{e}}
\def\re{\text{Re}}
\def\im{\text{Im}}

\newcommand{\D}{\mathbb{D}}
\newcommand{\ic}{\text{i}}
\newcommand{\spec}{\text{spec}\,}

\def\e{\varepsilon}
\def\g{\gamma}
\def\r{\rho}

\def\u{{\rm u}}
\def\s{{\rm s}}

\def\p{\mu}
\def\f{\psi}
\def\fa{\phi_0}
\def\varp{\eta_1}
\def\vars{\eta_2}
\def\varpin{\hat{\eta}_1}
\def\varsin{\hat{\eta}_2}

\def\difin{\Theta}
\def\fin{\varphi}
\def\spli{\zeta}
\def\varpine{\spli_1}
\def\varsine{\spli_2}

\def\gen{\mathcal{O}}
\def\lop{\mathcal{L}}
\def\lopin{\mathcal{L}}
\def\inlop{\mathcal{L}^{-1}}
\def\inlopin{\mathcal{L}^{-1}}
\def\Ein{\mathcal{X}}
\def\EInt{\mathcal{Y}}
\def\OL{A}
\def\IL{M}
\def\UR{E}
\def\R0{\epsilon_0}
\def\bb{b}
\def\rF{\varrho}
\def\rpf{\varrho_0}
\def\iff{h}
\def\eH{\ell}

\def\F{F}
\def\ff{f}
\def\G{G}
\def\gg{g}
\def\fB{\varphi}

\title{The inner equation for generalized standard maps}
\author{I. Baldom\'a, P. Mart\'{\i}n \footnote{Universitat Polit\`ecnica de Catalunya
({\tt immaculada.baldoma@upc.edu},{\tt martin@ma4.upc.edu}).}}

\begin{document}
\maketitle

\begin{abstract}
We study particular solutions of the \emph{inner equation}
associated to the splitting of separatrices on \emph{generalized
standard maps}. An exponentially small complete expression for their
difference is obtained. We also provide numerical evidence that the
inner equation
provides quantitative information of the
splitting of separatrices even in the case when the limit flow does
not.
\end{abstract}



\bibliographystyle{alpha}

\pagestyle{myheadings}
\thispagestyle{plain}
\markboth{I. Baldom\'a, P. Mart\'{\i}n}{Inner equation for area preserving maps}

\section{Introduction}
The phenomenon of the splitting of separatrices occurs when a
dynamical system having an invariant object (a fixed point, a
periodic orbit, a torus, etc.) with coincident branches of its
stable and unstable invariant manifolds (a separatrix), is
perturbed. Generically, a new invariant object of the perturbed
system arises which still possesses stable and unstable invariant
manifolds but which no longer coincide.

The problem of measuring the size of this splitting is long standing
in Dynamics. It is related to the existence of transversal
homoclinic points and, consequently, with the non-integrability and
with the size of the stochastic zone of the system under study.

The most popular tool for measuring the splitting of separatrices is
the Melnikov approach~\cite{Melnikov63}. It is based on classical
perturbation theory and provides a first order approximation for the
splitting by using the distance between the stable and unstable
invariant manifolds of the perturbed system. Nevertheless there are
plenty of interesting (and in some sense generic) situations where
this approach fails: when the Melnikov function does not predict
correctly the size of the splitting or when no Melnikov function is
available, for instance, when integrable systems near simple
resonances are perturbed. In this case, Poincar\'{e} already
detected in~\cite{Poincare99} that the separatrix splits but it
turns out that the size of this splitting is exponentially small in
the perturbation parameter, what it is usually known as a
beyond-all-orders phenomenon. Consequently a direct application of a
first order perturbation theory never will be able to provide a good
estimation for this exponentially small splitting. There are other
settings, related for instance to Arnold diffusion and fluid
transport, when the splitting of separatrices is exponentially small
in the perturbation parameter, but from now on we will restrict
ourselves to the case of near identity, analytic, area-preserving
maps.

\subsection{Exponentially small splitting of separatrices in analytic maps}
Throughout this introduction we will avoid precise statements and
technicalities but we will give the main ideas about the
exponentially small phenomenon.

Consider an area preserving analytic map, close to the identity, that is, a map which can be written as
\begin{equation}\label{APM}
G(z,h)=z+h g(z,h),\;\;\;\;z\in \RR^2
\end{equation}
where $h$ is a small parameter and $g(0,h)=0$, so that the origin is
a fixed point for any value of $h$. Assume also that the origin is a
weakly hyperbolic fixed point. Namely, redefining the parameter $h$
if necessary, the eigenvalues $\lambda, \lambda^{-1}$ of $DG(0)$ are
of the form $\lambda = \text{e}^{h}=1+O(h)$. In this case, there
exist $W^{\s}$ and $W^{\u}$, the stable and unstable invariant
manifolds of the origin, respectively. The goal is to measure the
discrepancy between these invariant manifolds. Notice that, since
for $h=0$ the map $G(z,0)=z$, this is a beyond-all-orders
phenomenon. The strategy is to not consider the first approximation
of the map $G$ as simply taking $h=0$, but as the time $h$ map of
the vector field
\begin{equation}\label{APMVF}
z'=g(z,0).
\end{equation}
It can be seen, for instance~\cite{FS90}, that this approximation
holds under generic and checkable assumptions. If the vector
field~\eqref{APMVF} possesses a homoclinic connection $\gamma_0$
associated to the origin (the fixed point), then one expects that
the exponentially small splitting of separatrices phenomenon arises
for maps of the form~\eqref{APM}. In fact in~\cite{FS90} is proved
that, for any $p\in W^{\s}$
\begin{equation}\label{boundexp}
\text{dist}(p, W^{\u}) \leq K_{\sigma} \ee^{-2\pi \sigma/h}
\end{equation}
being $\sigma>0$ and $K_{\sigma}$ a constant depending on $\sigma$
and $p$, but independent of $h$. Nevertheless this upper bound is
not useful for deciding whether the separatrix $\gamma_0$ splits or
not. It turns out to be mandatory to obtain an expression for the
asymptotic behavior of the splitting.

We emphasize here that, even when the distance between $W^{\s}$ and
$W^{\u}$ seems a good choice for measuring the splitting, it depends
on the point $p$. This is because this measure does not exploit the
area preserving character of our map. There are several quantities
more appropriate for this task. One of them is the Lazutkin
invariant (see formula~\eqref{def:Lazutkininvariant} in
Section~\ref{firstasumptions}) which is related to the angle between
both $W^{\s}$ and $W^{\u}$ at a homoclinic point. An upper bound
similar to~\eqref{boundexp} for the Lazutkin invariant can be
obtained but with $K_{\sigma}$ depending only on $\sigma$.

If the asymptotic behavior for  the splitting has to be proved, the
first question that arises from~\eqref{boundexp}, is how much bigger
$\sigma$ could be. To find this optimal value of $\sigma$ one has to
know the analyticity domain of $\gamma_0$, the homoclinic connection
of the vector field~\eqref{APMVF}. It is proven in~\cite{Fontich95}
that $\gamma_0$ has complex singularities, henceforth it is analytic
in a maximal complex strip $\{ t \in \C : |\im t| < \sigma_0\}$. The
bound~\eqref{boundexp} holds for any $\sigma<\sigma_0$, changing
$K_{\sigma}$ appropriately. Notice that if we take any fixed
$\sigma_* < \sigma_0$, we do not obtain a sharp upper bound, simply
because the result also holds for $\sigma$, with $\sigma_*<
\sigma<\sigma_0$ and henceforth, taking $h$ small enough we get a
better estimate than the previous one. In consequence any asymptotic
formula will require taking $\sigma$ arbitrarily close to $\sigma_0$
as a function of $h$.

The key point for proving the bound~\eqref{boundexp} is to obtain
good parameterizations for the invariant manifolds $W^{\s}$,
$W^{\u}$, which are analytic in the complex strip $\{ t \in \C :
|\im t| < \sigma\}$ with $\sigma<\sigma_0$. The natural
parameterization for the invariant manifolds are functions
$\gamma^{\u,\s}(t)$ that satisfy
\begin{equation}\label{INVCOND}
G(\gamma^{\u,\s}(t),h)=\gamma^{\u,\s}(t+h).
\end{equation}
Notice that the homoclinic connection $\gamma_0$ satisfies this
invariance equation for the time $h$ flow of the vector
field~\eqref{APMVF}. As we have mentioned in the above paragraph, to
obtain an asymptotic formula for the splitting it is necessary to
find solutions of the invariance equation~\eqref{INVCOND} defined
for values of $t$ arbitrarily close to $\sigma_0$ as a function of
$h$. Since the strip is limited by the singularities of the
homoclinic connection $\gamma_0$, this study becomes harder when the
values of $t$ are closer to these singularities. The \emph{inner
equation} is a suitable approximation of the invariance
equation~\eqref{INVCOND} for values of $t$ close to these
singularities.

The main goal of this paper is to derive the  inner equation for a
large set of area-preserving maps (the so called \emph{generalized
standard maps}) and to obtain information about some special
solutions and their difference. This is a first step in the proof of
an asymptotic formula for the splitting of the invariant manifolds
for these maps, but obtaining this formula is beyond the scope of
this work. Nevertheless we will provide some numerical results
which, combined with heuristic arguments (see
Section~\ref{numericalresults}, in particular
formula~\eqref{innerinvLazutkin1}) support the relation between the
splitting and the inner equation.

\subsection{The inner equation. An overview}\label{introduction:innerequation}
The study of the inner equation has been at the heart of the proof
of the exponentially small splitting of separatrices in many
examples, for maps~\cite{Gel99,MSS11a,MSS11b} as well as for
flows~\cite{GOS10}.

In the case of area-preserving analytic maps, the use of the inner
equation dates back to~\cite{Laz84}, where a scheme to obtain an
asymptotic formula for the splitting of separatrices of the Chirikov
standard map was established. In that paper, a particular instance
of the inner equation was introduced: the so called
\emph{semi-standard map}. Further development of the ideas
in~\cite{Laz84} lead to the first rigorous proof of the asymptotic
formula for the Chirikov standard map in~\cite{Gel99}. A brief
discussion on the splitting size on the Chirikov standard map can be
found in~\cite{Gel00a}. From the same authors, the survey on
exponentially small phenomena~\cite{GL01} introduces, among other
things, the inner equations associated to \emph{polynomial standard
maps}, and lists in an informal way asymptotic formulas for the
splitting of separatrices in those cases. It is also remarkable that
in the paper~\cite{GS01} resurgence theory is applied to the study
of the solutions of the inner equation associated to the area
preserving H\'enon map. This paper is strongly related
to~\cite{Gel00b}. Also in the study of perturbation of the McMillan
map~\cite{MSS11a,MSS11b} resurgence methods were applied to study
the inner equation. Summarizing, one can find rigorous results on
the inner equation in~\cite{Gel99, GS01,MSS11b} in particular
examples which are covered under our present work, which also
includes and generalizes the ones present in~\cite{GL01} and the
numerical study~\cite{GS08}.

In the case of flows, the inner equation has been a successful tool
to measure the splitting of separatrices when the Melnikov function
fails to predict the size of the splitting, like in the rapidly
forced pendulum (see~\cite{Gel00c,GOS10} or~\cite{BFGS11} for a
generalization to arbitrary polynomial Hamiltonian systems of one
and a half degrees of freedom, following the study on the inner
equation in~\cite{Bal06}). A different technique based on continuous
averaging to study the exponentially small behavior of the splitting
can be found in~\cite{Treschev}.

The purpose of the present paper is twofold, a combination of
rigorous theoretical results in a general setting and numerical
experiments avoiding lengthy proofs in particular examples. One of
the numerical examples shows a a type of behavior that is not
covered by the surveys~\cite{GL01,GS08} (see the end of this
section).

We study some second order difference equations, called inner
equations, which have the form either
\begin{equation*}
\phi(z+1)-2\phi(z)+\phi(z-1)=-\phi^n(z) + \G(\phi(z))
\end{equation*}
or
\begin{equation*}
\phi(z+1)-2\phi(z)+\phi(z-1)=-\ee^{n \phi(z)} + \G(\ee^{\phi(z)}),
\end{equation*}
depending on the class of maps under consideration, and where
$\G(w)$ is an analytic function such that $\G(w) = \gen(w^{n+1})$.

These equations appear, in particular, in the problem of
exponentially small splitting of separatrices in \emph{generalized
standard maps} (see next Section for definitions), but they can
appear in studies of other types of maps (with parabolic fixed
points, for instance), and, with this applicability in mind, we
consider them in their full generality (see
equations~\eqref{innereq} and~\eqref{innereqt}). In particular, our
present results generalize those on the inner equations appearing
in~\cite{Laz84,Gel99,Gel00a,GL01,GS01,MSS11a,MSS11b}. It is
important to remark that in the previous literature on the subject
the symmetries of the particular problems under consideration where
exploited extensively in the proofs. Our present formulation does
not rely on additional symmetries, making it suitable for
applications. In particular, we provide all the technical details
and complete proofs of the statements concerning the inner equations
and their solutions. As a side comment for the specialists, there
are several technical improvements in the proofs of our theoretical
results, which we expect can be applied in other problems related to
difference equations.

We describe a large set of formal solutions of these inner
equations, from which some \emph{true} solutions are obtained, and
we derive a complete formula for their difference. The main results
are collected in Section~\ref{sec:mainresults}, while
Section~\ref{sec:generalizedstandardmaps} plays the role of a more
detailed introduction of the problem and description of some of the
known results. Sections~\ref{sec:formalsolution}, \ref{sec:solinner}
and~\ref{sec:difference} are devoted to prove the theoretical
results, while Section~\ref{numericalresults} contains the numerical
results with a non-rigorous exposition of their relation to the
developed theory. It should be remarked that the relation between
the inner equation and the actual computation of the splitting, in
the particular cases where proofs are available
(see~\cite{Gel99,MSS11a,MSS11b}), is lengthy and full of
technicalities. Our exposition here tries to give the reader an idea
of the link between the inner equation and splitting size, by making
very strong assumptions, in order to explain the obtained numerical
results. These assumptions are fully proved in the literature for
the Chirikov standard map and the McMillan map.

The numerical experiments have been conducted to test the
applicability of the theoretical results. Although academic in
nature, they show the relation between the splitting of separatrices
and the difference between two solutions of the inner equation.
Moreover, the main example exhibits a behavior that is not covered
by the surveys~\cite{GL01,GS08}. In this example, given by the map
\[
\begin{pmatrix} x \\ y \end{pmatrix} \mapsto
\begin{pmatrix} x + y + \e (x-x^3) - \e^2 x^7\\ y +\e (x-x^3) - \e^2
x^7\end{pmatrix},
\]
where $\e$ is a small parameter, although the size of the splitting
is much larger than the guess suggested by~\cite{FS90}, the leading
term of its asymptotic behavior is provided by the
inner equation. As a matter of fact, the splitting size in this
example behaves asymptotically when $\e \to 0$ as
$$
\frac{A}{h^{10/3}}\exp\left(-\frac{\pi^2}{h}+\frac{2^{5/4}\sqrt{\pi}\Gamma(3/4)^2)}{h^{1/2}}\right)
(1+ \text{higher order terms}),$$ where $\e = 4 \sinh^2(h/2)$ and
$A$ is a constant related to some inner equation, while the na\"{\i}ve guess provided by the limit
flow (see Section~\ref{sec:generalizedstandardmaps} for details), in
this case the Duffing equation $\ddot x = x - x^3$, would be
exponential with exponent $-\pi^2/h$. That is, the correction term
is larger than any power of~$h$.
See Sections~\ref{discrepantexample} and~\ref{numericalresults}.

We remark that although the computation of the actual splitting has
been performed by using the multiple precision package PARI-GP, the
computation of the leading term has been achieved by using the
standard {\tt long double} precision in~{\tt C}.

\section{Generalized standard maps and exponentially small splitting of separatrices}
\label{sec:generalizedstandardmaps}
\subsection{Generalized standard maps}
\label{firstasumptions} We will say that an area preserving map
$(x^*,y^*) = \F(x,y)$ is a \emph{generalized standard map} if it can
be written in the form
\begin{equation}
\label{def:generilizedstandardmaps} \left\{
\begin{aligned}
x^*&= x+y+ \ff(x,h), \\
y^*&=y+\ff(x,h),
\end{aligned}
\right.
\end{equation}
where $h$ is a small parameter. We will assume that $f$ depends
analytically in its arguments in $|h|< h_0$, $|x| < \rho_0$, for
some fixed $h_0, \rho_0 >0$. We will be interested in the case when
the origin is a fixed point of $\F$, that is, $\ff(0,h) = 0$. Moreover,
we will assume the origin to be weakly hyperbolic, although our
study may be applied also to the case of a parabolic fixed point.

The parameter~$h$ is chosen in such a way that $\spec D\F(0,0) =\{
\ee^{h}, \ee^{-h}\}$. This last condition is equivalent to impose
$\ff'(0,h) = \frac{\partial}{\partial x} \ff (0,h) = \e$, with $\e = 4
\sinh^2(h/2)$. We further assume that
\begin{equation}
\label{hypothesisonf} \ff(x,h) = \sum_{k\ge 0} \ff_k(x) h^{k+2} = \e
\ff_0(x) + O(h^3 x).
\end{equation}
Under these conditions, the map~\eqref{def:generilizedstandardmaps}
can be written as a close to the identity map: with the scaling
$\tilde x = x$, $h \tilde y = y$, it becomes (using again $x$ and
$y$ as variables)
\begin{equation}
\label{def:generilizedstandardmapsclosetotheidentity} \left\{
\begin{aligned}
x^*&= x+ h y+  O(h^2 x), \\
y^*&=y+h \ff_0(x) + O(h^2x).
\end{aligned}
\right.
\end{equation}
When $h$ is small, the
map~\eqref{def:generilizedstandardmapsclosetotheidentity} is well
approximated by the time~$h$ map of the flow of the Hamiltonian
system
\begin{equation}
\label{def:limitfloweq}
 \left\{
\begin{aligned}
\dot x &= y, \\
\dot y &= \ff_0(x).
\end{aligned}
\right.
\end{equation}
We assume that the origin in~\eqref{def:limitfloweq}, which is a
fixed point, possesses a homoclinic connection, $\gamma_0(t) =
(x_0(t),y_0(t))$. By a shift on $t$, we can choose $\gamma_0$ such
that $x_0$ is an even function, that is, $\gamma_0$ intersects
transversally the line $\{y = 0\}$ at $t=0$. The invariant manifolds
of the origin for the
map~\eqref{def:generilizedstandardmapsclosetotheidentity} are close
to this homoclinic connection. Hence, if $h$ is small, by the
conservation of the area, they must intersect. It is not difficult
to check that the expansions in powers of $h$ of the stable and
unstable curves coincide. As a consequence, the expansion of the
angle of intersection in powers of $h$ vanishes, which, in view of
the analytic nature of the problem, suggests that this angle may
have an exponentially small behavior in $h$. In fact, Fontich and
Sim\'o, in~\cite{FS90}, obtained an exponentially small upper bound
for the angle. They showed that if $\gamma_0$ is analytic in the
complex strip $\{|\im t| < \sigma_0\}$ and the map $\F$ is defined
around the homoclinic orbit, then, for any $0 < \sigma < \sigma_0$,
the distance between the stable and the unstable manifold of the
origin of~\eqref{def:generilizedstandardmapsclosetotheidentity} is
bounded by $K_{\sigma} \ee^{-2\pi \sigma/h}$, for any $0 < h <
h_{\sigma}$, where $K_{\sigma}$ and $h_{\sigma}$ are positive
constants depending on $\sigma$ and $K_{\sigma}$ depends also on the point
where this distance is measured. Restoring to the original
variables, the same applies to the invariant manifolds of the origin
of~\eqref{def:generilizedstandardmaps}.

Equivalently, a \emph{natural parametrization} $\gamma(t) =
(x(t),y(t))$ of the invariant manifolds of the origin
of~\eqref{def:generilizedstandardmaps}, when
condition~\eqref{hypothesisonf} is satisfied, that is, a
parametrization satisfying $\F \circ \gamma (t) = \gamma(t+h)$, must
be a solution of the difference equation
\begin{equation}
\label{eq:invariance} x(t+h) -2 x(t) +x(t-h) = \ff(x(t),h),
\end{equation}
with $y(t) = x(t)-x(t-h)$. This equation implies that the curve
$\gamma = (x,y)$ is invariant by $\F$ and the action of $\F$ on
$\gamma$ is conjugated to the shift on the parameter $t$: $t \mapsto
t+h$ . One must supply additional conditions on $\gamma$ to obtain
the invariant stable and unstable curves: if $\gamma$ is the
unstable manifold (resp. stable) of the origin then $\lim_{t \to
-\infty} x(t) = 0$ is required (resp. $\lim_{t \to \infty} x(t) =
0$).

Since the left hand side of the invariance
equation~\eqref{eq:invariance} is formally
$$
x(t+h) -2 x(t) +x(t-h) = 4 \sinh^2\left(\frac{h}{2}
\frac{\partial}{\partial t}\right) (x)(t) = h^2 \ddot x (t) +
O(h^4),
$$
it can be approximated, when $h$ is small, by the second order
differential equation
\begin{equation}
\label{def:limitfloweq2} \ddot{x} = \ff_0(x),
\end{equation}
which is nothing more than~\eqref{def:limitfloweq}.

In order to measure the difference between the invariant manifolds,
it is often used the \emph{Lazutkin invariant} at a homoclinic point
$p = \gamma^u(0) = \gamma^s(0)$,
\begin{equation}
\label{def:Lazutkininvariant} \omega(p) = \det \left(\frac{d}{dt}
\gamma^u (0), \frac{d}{dt} \gamma^s(0)\right),
\end{equation}
being $\gamma^{u,s}(t)$ natural parametrizations of the unstable and
stable manifolds. Unlike the angle between the invariant curves,
$\omega(p)$ is a symplectic invariant and only depends on the
homoclinic orbit, not on the specific point $p$. Another symplectic
invariant quantity that can be used to measure the splitting of the
separatrices is the area of the lobe between two consecutive
homoclinic points.

Since an upper bound of the splitting of the separatrices is known,
the question of its asymptotic behavior when~$h$ tends to~$0$
arises. Some well known examples in the literature where this
formula is available are briefly summarized in the next subsection.

\subsection{Examples of generalized standard maps with
exponentially small splitting of separatrices}
\label{standard-mcmillan}

There are not many examples with a complete proof of an asymptotic
formula for the splitting of separatrices in area preserving maps.
Here we quote two. There is a more abundant literature about
splitting of separatrices in Hamiltonian systems with one and a half
degrees of freedom
(see~\cite{HolMS91,DelsSeara97,DelsGelJorSea,Treschev,LocMS03,OliveSS,DelG04})

The first example is the Chirikov standard map, introduced by
Chirikov as a basic model of the motion of a system close to a
nonlinear resonance (see, for instance, \cite{Chi79}). It
corresponds to take $\ff(x,h) = \e\sin(x)$, with $\e = 4
\sinh^2(h/2)$. This map is in fact defined in the annulus, and the
limit flow~\eqref{def:limitfloweq} is a pendulum with the saddle at
the origin. The separatrix of the pendulum is analytic in the strip
$\{|\im t| < \pi/2\}$ and has a singularity at $t= \ic \pi/2$. The
symmetries of the problem imply that there is a homoclinic point $p$
on the line $x= \pi$.

In~\cite{Gel99}, Gelfreich proved, following the scheme developed by
Lazutkin in~\cite{Laz84}, that
\[
\omega(p) \asymp \frac{4\pi}{h^2} \ee^{-\pi^2/h} \sum_{k\ge 0}
h^{2k} \omega_k,
\]
where the series in the right hand side is asymptotic. In
particular, the exponent in the exponential is well predicted by
Fontich-Sim\'o theorem in~\cite{FS90}.

The second example is the perturbed McMillan map. The McMillan map
itself was introduced in~\cite{McM71} in the modelization of
particle accelerator dynamics. In~\cite{DR98,MSS11a,MSS11b},
perturbations of the McMillan family of the form
\begin{equation}
\label{def:mcmillan} \left\{
\begin{aligned}
x^*&= y, \\
y^*&= -x + \frac{2\cosh(h) y}{1+y^2} +\tilde \e V'(y),
\end{aligned}
\right.
\end{equation}
are considered, with $V(y) = \sum_{k\ge 2} V_k y^{2k}$ analytic in a
neighborhood of $y=0$. In the above formula, $h$ is the Lyapunov
exponent of the origin, which is the small parameter, and $\tilde
\e$ is independent of $h$ and not necessarily small. The McMillan
map is obtained when $\tilde \e = 0$ and is integrable with a
polynomial first integral. See~\cite{DR98} for more details about
the McMillan map.

With a linear change of coordinates, the map \eqref{def:mcmillan}
can be written in the form~\eqref{def:generilizedstandardmaps} with
\begin{align}\label{fformcmillan}
 \ff(x,h) &= \e \frac{x-2 x^3}{1+ \e x^2} +
\frac{\tilde\e}{\e^{1/2}} V'\left( \e^{1/2} x \right) \notag\\
&= \e(x-(2-4\tilde \e V_2) x^3) -\e^2(x^2-(2+6\tilde \e
V_3)x^5)+O(\e^4),
\end{align}
where, again, $\e = 4 \sinh^2(h/2)$. The limit
flow~\eqref{def:limitfloweq2} is the Duffing equation
$$
\ddot{x} = x-(2-4\tilde \e V_2) x^3,
$$
with homoclinic $x_0(t) = \alpha/\cosh(t)$, $\alpha = (1-2\tilde \e
V_2)^{-1/2}$ (assuming $|\tilde \e| < (2 V_2)^{-1}$). Its closest to
the real line singularities are located at $\pm \ic \pi/2$.
In~\cite{MSS11a,MSS11b}, improving a partial result in~\cite{DR98},
it was proven that, if $\hat V(2\pi) \neq 0$, where
\[
\hat V (\zeta)= \sum _{k\ge 2} V_k \frac{\zeta^{2k-1}}{(2k-1)!}
\]
is the Borel transform of $V$, then the invariant manifolds to the
origin of~\eqref{def:mcmillan} split when $\tilde \e \neq 0$ and the
Lazutkin invariant of a particular homoclinic orbit satisfies
\[
\omega \asymp \frac{4\pi\tilde \e}{\beta^2 h^2} e^{-\pi^2/h}
\sum_{k\ge 0} h^{2k} B_k^+(\tilde \e),
\]
where the functions $B_k^+$ are analytic around $\tilde \e = 0$,
$\beta^2 = 1 - 2\tilde \e V_2/\cosh h$ and $B_{0}^+(\tilde \e) =
4\pi^{2}\hat V(2\pi) + O(\tilde \e)$. If the map is written in the
form~\eqref{def:generilizedstandardmaps}, with the function $f$
given in~\eqref{fformcmillan}, the Lazutkin invariant has an
additional $h^2$ in the denominator. Again, the exponent of the
exponential is well predicted by Fontich-Sim\'o theorem.

\subsection{Numerical studies for polynomial generalized standard
maps} \label{numericalGS}

In~\cite{GS08}, Gelfreich and Sim\'o presented a detailed numerical
study of the splitting of the separatrices of the generalized
standard map~\eqref{def:generilizedstandardmaps} in the case $\ff(x,h)
= \e p(x)$, with $p (x) = \sum_{k=1}^n p_k x^k$ a polynomial of
degree $n$ with $p_1=1$ (which implies $\ff'(0,h)= \e$) and $p_n < 0$.
Is is also assumed that there is a homoclinic curve to the origin in
the limit flow system~\eqref{def:limitfloweq2}.

Then, via numerical experiments, the authors showed that the
asymptotic behavior of the Lazutkin invariant depends only on the
relative position of the singularities of the homoclinic solution
of~\eqref{def:limitfloweq2}, on the degree $n$ of the polynomial $p$
and on the coefficient $p_n$:
$$
\omega \asymp \frac{C_n}{|p_n|^{\nu/2} h^{\nu}} \ee^{-2\pi \rho/h}
\tilde \omega(h)+ \dots,
$$
where $\nu = 2(n+1)/(n-1)$, $\rho$ is the minimum distance to the
real line of the singularities of the homoclinic
of~\eqref{def:limitfloweq2}, $\tilde \omega(h) \not \equiv 0$ is
either a constant, a periodic function or a quasiperiodic function
of $1/h$, depending only on the number of singularities at~$|\im t|
= \rho$ and their relative positions and $C_n$ depends only on $n$.

Also in this case, the exponential behavior is well predicted by
Fontich-Sim\'{o}'s theorem.

\subsection{A discrepant example. Numerical observations}
\label{discrepantexample}

We introduce the generalized standard
map~\eqref{def:generilizedstandardmaps} induced by
\begin{equation}
\label{gsm7} \ff(x,h) = \e(x-x^3)-\e^2 x^7.
\end{equation}
Note that this map possesses terms in $\e^2$, like the McMillan map
has (see~\eqref{fformcmillan}). Unlike the McMillan case, the
function defining this map is entire.

The limit flow~\eqref{def:limitfloweq2} for this map is also a
Duffing equation, in this case $\ddot x = x-x^3$, with homoclinic
$x_0(t) = \sqrt{2}/\cosh(t)$, whose singularities are located at the
same place of the homoclinic of the McMillan map, being $\pi/2$
their minimum distance to the real line. Hence, one could be tempted
to infer that the exponential behavior of the Lazutkin invariant is
of order $e^{-\pi^2/h}$.

However, our numerical experiments suggest that the Lazutkin
invariant at the \emph{first} homoclinic point over the line $y=0$,
in the topology of the unstable manifold, behaves like
\begin{equation}
\label{omegamap7} \omega \asymp \frac{A}{h^{10/3}} \ee^{-2\pi
\rho(h)/h} + \dots,
\end{equation}
where
\begin{equation}
\rho(h) = \frac{\pi}{2}-\frac{2^{1/4}\Gamma(3/4)^2}{\sqrt{\pi}}
h^{1/2}+ O(h^{3/2})
\end{equation}
and $A=871.683\dots$. In particular, the size of the
Lazutkin invariant is much larger than the na\"{\i}ve guess, which,
in turn, suggests that the approximation of the invariant manifolds
provided by the limit flow~\eqref{def:limitfloweq2} is not good
enough to predict the asymptotic formula of the splitting.
Section~\ref{numericalresults} is devoted to explain these numerical
experiments. In particular, we will conjecture the source of the
function~$\rho(h)$ and the origin and computation of the
constant~$A$.

\subsection{Inner equation for generalized standard maps}
\label{subsec:innerequation}

In all the aforementioned examples, the constants $\omega_0$,
$B_0^+(\tilde \e)$, $C_n$ and $A$ in the leading term of the
asymptotic behavior of the Lazutkin invariant are related to a
suitable
inner equation,
whose solutions provide better approximations of the invariant
manifolds for values of $t$ in some regions of $\C$ than the one
provided by the limit flow~\eqref{def:limitfloweq2}. Even in the
case of the generalized standard map defined by~\eqref{gsm7}, where
the limit flow~\eqref{def:limitfloweq2} does not provide enough
information, the numerically evaluated constant~$A$
in~\eqref{omegamap7} is obtained from such an
%
inner equation.

In order to be able to construct the
inner equation
we will impose several conditions to the function
defining the generalized standard map.

Let $\F$ be a generalized standard map of the
form~\eqref{def:generilizedstandardmaps}, induced by a function
$\ff(x,h) = \sum_{k\ge 0} \ff_k(x) h^{k+2}$, satisfying the hypotheses
in Section~\ref{firstasumptions}. We furthermore assume:

\begin{itemize}
\item[\textbf{HP1}] For each $k\ge 0$,
$\ff_k(x)=\sum_{j=1}^{d_k} f_{k,j} x^j$, with $f_{k,d_k}\neq 0$.
\item[\textbf{HP2}] The function $k\mapsto (d_k-1)/(k+2)$ has a global maximum on $\N$.
Let $I\subset \N$ be the set where this maximum is achieved.
\end{itemize}

Hypothesis~\textbf{HP2} implies a restriction in the rate of growth
of the degree of each of the polynomials $\ff_k$, which can be at
most linear in $k$. We also remark that, combining hypothesis
\textbf{HP1} and \textbf{HP2} with the fact that $\ff$ is analytic in
the bidisk $\D_{\rho_0} \times \D_{h_0}$, one obtains that the
domain of analyticity with respect to $x$ depends on $h$ and tends
to be the whole complex plane when $h$ tends to $0$.

We fix $\chi \in \C$. We introduce the new unknown
$\phi(z)$ defined by $x(\chi+hz) = h^{-\alpha} \lambda \phi(z)$, with
$$
\alpha = \frac{k+2}{d_{k}-1}, \;\;\;\text{for any } k \in I
$$
and $\lambda$ a parameter to be determined later.
Note that, by definition of $I$ in \textbf{HP2}, $\alpha$ is indeed independent of $k\in I$. The invariance
equation~\eqref{eq:invariance} becomes
\begin{equation}
\label{inveqinnervariables} \phi(z+1) -2 \phi(z) +\phi(z-1) =
h^{\alpha} \lambda^{-1} \ff(h^{-\alpha} \lambda \phi(z),h).
\end{equation}
With the standing hypotheses, the right hand side above admits an expansion of suitable positive powers of $h$ as follows:
\begin{align*}
h^{\alpha} \lambda^{-1} \ff(h^{-\alpha} \lambda \phi(z),h) &= \sum_{k\ge 0} \sum_{j=1}^{d_k}\ff_{k,j} h^{-\alpha (j-1)} \lambda^{j-1} \phi^j(z) h^{k+2} \\
&= \sum_{k\ge 0} h^{k+2-\alpha(d_k-1)} \left (f_{k,d_k} \lambda^{d_k-1} \phi^{d_k}(z) +  \sum_{j=1}^{d_k-1} h^{\alpha(d_k-j)} f_{k,j} \lambda^{j-1} \phi^{j}(z)\right)
\\ &= \sum_{k\in I} f_{k,d_k} \lambda^{d_k-1} \phi^{d_k} + \gen(h^{\min\{1,\alpha\}}),
\end{align*}
where in the last equality we have used the definition of $\alpha$ and $I$.
The inner equation is obtained by keeping only the first term in $h$ in the right side of \eqref{inveqinnervariables}:
\begin{equation}\label{preinner}
\phi(z+1) -2 \phi(z) +\phi(z-1) = \sum_{k\in I} f_{k,d_k} \lambda^{d_k-1} \phi^{d_k}.
\end{equation}
Let $n=\min\{d_k : k\in I\}$. To simplify the notation we introduce the coefficients $\tilde{\G}_k$ such that
\begin{equation*}
\sum_{k\in I} f_{k,d_k} \lambda^{d_k-1} \phi^{d_k} = \sum_{k\geq n} \tilde{\G}_k \lambda^{k-1} \phi^{k}.
\end{equation*}
Now we take $\lambda$ such that
$
\lambda^{n-1} = -(\tilde{\G}_{n})^{-1}.
$
With this choice, the
inner equation
associated to the generalized standard map
is
\begin{equation}
\label{innereqpresentation} \phi(z+1) -2 \phi(z) +\phi(z-1) =
 -\phi^{n}(z) + \sum_{k\geq n+1} \G_k \phi^k(z)
\end{equation}
being $\G_k = \tilde{\G}_k \lambda^{k-1}$. Notice that $\G(\phi) := \sum_{k\geq  n+1} \G_k \phi^k $ is analytic in a neighborhood of $\phi=0$.

In the trigonometric case one can proceed analogously. Indeed,
assume that $\ff(x,h) = \sum_{k\ge 0} \ff_k(x) h^{k+2}$, with $f$
satisfying:
\begin{itemize}
\item[\textbf{HT1}] For each $k\ge 0$, $\ff_k(x)
= \sum_{j=-d_k}^{d_k} \ff_{k,j} \ee^{\ic j x}$ is a trigonometric
polynomial of degree $d_k \ge a$, with $\ff_{k,d_{k}} \neq 0$.
\item[\textbf{HT2}] The function $k\mapsto d_k/(k+2)$ has a global maximum on $\N$.
Let $I\subset \N$ be the set where this maximum is achieved.
\end{itemize}
For any $\chi \in \C$, we define $\phi(z)$ by $x(\chi + hz)=-\ic \log
\big (h^{\alpha} \lambda\big ) + \ic \phi(z)$ with
\begin{equation}
\alpha=\frac{k+2}{d_{k}},\;\;\;\;\; \;\;\;\text{for any } k \in I
\end{equation}
and $\lambda$ a parameter. Then, the invariance equation~\eqref{eq:invariance} becomes
\begin{equation}
\label{inveqinnervariablestrig} \phi(z+1) -2 \phi(z) +\phi(z-1) =
-\ic \ff(-\ic \log \big (h^{\alpha}\lambda\big) + \ic \phi(z),h).
\end{equation}
As in \eqref{preinner}, the
inner equation
is the above equation
when $h\to 0$. In this case, taking $\lambda$ appropriately, one obtains
\begin{equation}
\label{innereqpresentation-trig} \phi(z+1) -2 \phi(z) +\phi(z-1) =
 - \ee^{(n-1)\phi(z)} + \sum_{k\geq n} G_k \ee^{k\phi(z)},
\end{equation}
where $n-1=\min\{d_k : k\in I\}$. The discrepancy in the definition of $n$ in both cases
allows us to make an unified treatment of the problem in next sections.

Since the original invariance equation~\eqref{eq:invariance} is
autonomous, the
inner equation~\eqref{innereqpresentation} and~\eqref{innereqpresentation-trig}
does not depend on the choice of the complex number~$\chi$
introduced with the new unknown $\phi$. Nevertheless, this complex
number is essential when the size of the splitting of separatrices
is studied and has to be well chosen. Roughly speaking, it will measure the exponentially smallness
of the splitting which turns out to be $\gen (h^{\nu} \ee^{-2\pi \im \chi/h}))$ for some
$\nu\in \RR$. This asymptotic behavior has only been proved for particular maps (see Section~\ref{standard-mcmillan})
but there are numerical evidences, Sections~\ref{numericalGS} and~\ref{discrepantexample},
that it also holds in a more general setting.
We plan, in a future work, to prove it for the generalized standard maps.

In the examples presented in
Sections~\ref{standard-mcmillan} and~\ref{numericalGS}, $\chi$ is
chosen to be the location of the closest to the real line
singularity of the homoclinic solution $\gamma_0$ of the limit
flow~\eqref{def:limitfloweq2}. In example in
Section~\ref{discrepantexample}, $\chi$ is also related to the
singularities of a homoclinic solution of some flow, which is not
longer~\eqref{def:limitfloweq2} but $\ddot{x}=x-x^3 -\varepsilon x^7$.

\subsubsection{Some examples of the inner equation}\label{Examplesinner}
Here we show how the inner equation is derived for some examples.

The first one is the map introduced in
Section~\ref{discrepantexample}. Its inner equation is
\begin{equation}
\label{innerdiscrepant}
\phi(z+1) -2 \phi(z) +\phi(z-1) = -\phi^7(z).
\end{equation}
Indeed, in this case $f(x,h)=\varepsilon (x-x^3) + \varepsilon^2 x^7$ with $\varepsilon = 4\sinh^2 (h/2)$ . Therefore,
$f_0(x)=x-x^3$, $f_{2k}(x) = f_{2k,1}x- f_{2k,3} x^3 - f_{2k,7}x^{7}$ and $f_{2k-1}(x)=0$, for $k\geq 1$, which implies that
$d_{0}=3$, $d_{2k}=7$ and $d_{2k-1}=0$ for $k\geq 1$. In this situation, it is clear that $n=7$, $\alpha=2/3$ and the set $I=\{2\}$, therefore
the right hand side of equation~\eqref{preinner} is $f_{2,7} \lambda^6 \phi^7$ and defining $\lambda$ adequately we encounter
equation~\eqref{innerdiscrepant}.

Now we compute the inner equation for the generalized standard map induced by $f(x,h)=\varepsilon (x-x^3)$. In this case
$f_{2k}(x) = f_{2k,1}x-f_{2k,3}x^3$, $d_{2k}=3$, $f_{2k+1}(x)=0$ and $d_{2k+1}=0$, for $k\geq 0$
and this implies that $n=3$, $\alpha=1$ and the set $I=\{0\}$. Then, the
right hand side of equation~\eqref{preinner} is $f_{0,d_{0}}\lambda^2 \phi^3$ and we obtain the inner equation
\begin{equation}\label{innernodiscrepant}
\phi(z+1) -2 \phi(z) +\phi(z-1) = -\phi^3(z).
\end{equation}

We can also encounter inner equations having infinite terms in its right hand side. For instance by considering
$f(x,h) = \varepsilon \sin(x) + \sum_{k\geq 1} a_{k} h^{2k+2} \sin\big ( (k+1) x \big )$. In this case $d_{2k}=2k+2$, $d_{2k+1}=0$, $n=2$, $\alpha=2$
and $I=\{k\in \N : k\;\; \text{is even}\}$, so that the inner equation is
\begin{equation*}
\phi(z+1) -2 \phi(z) +\phi(z-1) = -\ee^{\phi(z)} + \sum_{k\geq 2} G_k\big (\ee^{k\phi(z)}).
\end{equation*}

The main purpose of this paper is to provide some particular
solutions of the
\emph{inner equation}~\eqref{innereqpresentation} and~\eqref{innereqpresentation-trig}
as well as to compute an explicit formula for their difference. The
precise statement is placed in next section, while its proof is
spread along the subsequent ones. As we have already commented in
Section~\ref{introduction:innerequation}, this computation has been in the
heart of the proof of the splitting of separatrices in all the known
examples, and it also gives an explanation to the numerical results
concerning the example in Section~\ref{discrepantexample}.
\section{Main results}
\label{sec:mainresults} We consider the linear operators
\begin{equation}\label{Delta1}
\Delta (\phi)(z) = \phi(z+1) - \phi(z)
\end{equation}
and
\begin{equation}\label{Delta2}
\Delta^2 (\phi)(z) = \Delta(\phi)(z) - \Delta(\phi)(z-1) = \phi(z+1)-2\phi(z) + \phi(z-1)
\end{equation}
and two types of
inner equation.
The first one, under the hypotheses \textbf{HP1}, \textbf{HP2}, which from now on we will call polynomial case,
\begin{equation}\label{innereq}
\Delta^2 (\phi) = g(\phi,\p):=-\phi^n + \G(\phi,\p)
\end{equation}
and the second one, under the hypotheses \textbf{HT1}, \textbf{HT2}, which we will call trigonometric case,
\begin{equation}\label{innereqt}
\Delta^2 (\phi) = g(\phi,\p):=-\ee^{(n-1)\phi} + \G(\ee^{\phi},\p),
\end{equation}
 with $\G$ an analytic function in some open bidisk
$\mathbb{D}(\rF)\times \mathbb{D}(\p_0) \in \mathbb{C}^2$ and such
that
\begin{align}
\label{def:F} \G(y,\p) &= \sum_{k\geq n+1} \G_k(\p) y^{k}  & \text{in the polynomial case}\\
\label{def:Ftrig} \G(\ee^{y},\p) &= \sum_{k\geq n} \G_k(\p) \ee^{ky} & \text{in the trigonometric case}.
\end{align}
The parameter~$\p$ is included for the sake of completeness and it is a regular parameter.
\begin{remark}{\rm
Let $\alpha\in \mathbb{R}$ be such that $\alpha  n>1$.
If we consider inner equations of the form either $\Delta^2(\phi) = g(\phi^{\alpha},\p)$ in the polynomial case or
$\Delta^2(\phi) = g(\alpha \phi,\p)$ in the trigonometric one, the results in this section also hold true with the
same proof. However, in order to avoid a new parameter, we restrict ourselves to the hypotheses above.}
\end{remark}

In this section we present the results dealing with both formal and
analytic solutions of the
inner equation.

Given $\nu>0$, we will denote by
$$
\C[[z^{-\nu}],\{\p\}] = \left \{ \phi(z) = \sum_{k\geq 1}
\frac{c_{k-1}(\p)}{z^{\nu k }} \;\;\vert \;\; c_{k-1}:B(\p_0) \to \C
\right \}
$$
the space of formal power series in $z^{-\nu}$ without constant
term, whose coefficients $c_{k-1}$, depend analytically on $\p \in
B(\p_0)$.

\begin{proposition}
\label{prop:formalsolutionsoftheinnerequation} Let $n\ge 2$, $r=
2/(n-1)$.
\begin{enumerate}
\item If $n$ is even, the equations~\eqref{innereq}
and~\eqref{innereqt} admit a unique formal solution $\tilde \phi$
such that $\tilde \phi \in \C[[z^{-r}],\{\p\}]$ with $c_0^{n-1} =
-r(r+1)$, in the case of~\eqref{innereq}, and, in the case
of~\eqref{innereqt}, $\tilde \phi - \tilde \phi_0 \in
\C[[z^{-r}],\{\p\}]$, with
\begin{equation}\label{deftildephi0formal}
 \tilde \phi_0  (z) =
\frac{1}{n-1}\log\left(-\frac{2}{n-1}\frac{1}{z^2}\right).
\end{equation}
Moreover, any formal solution of the
inner equation~\eqref{innereq}
belonging to $\C[[z^{-r/2}],\{\p\}]$ is of the form $\tilde \phi
(z-c,\p)$, for some  $c\in \C$ and $c_0$ such that $c_0^{n-1} =
-r(r+1)$. The same applies to any formal solution~$\phi$
of~\eqref{innereqt} such that $\phi-\tilde
\phi_0\in\C[[z^{-r/2}],\{\p\}]$.
\item  If $n = 2m -1$ with $m\ge 2$, the formal solutions are
\begin{equation}\label{formalsolutionnodd}
\tilde \phi  (z,\p)=
\sum_{k\ge 1} \frac{1}{z^{kr}}\sum_{0\le j \le
\left[\frac{k-1}{m-1}\right]} c_{k-1,j}(\p) \log^j z,
\end{equation}
in the case of~\eqref{innereq}, with $c_0 = c_{0,0}$ satisfying
$c_0^{n-1} = -r(r+1)$, and
\begin{equation}\label{formalsolutionnoddt}
\tilde \phi  (z,\p)=
\frac{1}{n-1}\log\left(-\frac{2}{n-1}\frac{1}{z^2}\right)
+\sum_{k\ge 1} \frac{1}{z^{kr}}\sum_{0\le j \le
\left[\frac{k}{m-1}\right]} c_{k-1,j}(\p) \log^j z,
\end{equation}
in the case of~\eqref{innereqt}. The symbol $[x]$ stands for the
integer part of $x$. The coefficients~$c_{k-1,j}$ are analytic
functions in $B(\p_0)$.

The solution is unique provided that $c_{m-1,0} = 0$. Any other
formal solution of the form~\eqref{formalsolutionnodd}
or~\eqref{formalsolutionnoddt} is obtained from these ones by
translation.
\end{enumerate}
\end{proposition}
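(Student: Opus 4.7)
The plan is to substitute the ansatz
\[
\tilde\phi(z,\p)=\sum_{k\ge 1} A_k(\log z,\p)\,z^{-kr},
\]
(prefixed by $\tilde\phi_0$ in the trigonometric case) with $A_k$ polynomials in $\log z$ whose coefficients are analytic in $\p$, and solve the inner equation order by order in $z^{-r}$. Using the key identity $(n-1)r=2$ and the binomial expansion of $(z\pm 1)^{-kr}$ (together with the series for $\log(z\pm 1)$ in the trigonometric case), both sides of~\eqref{innereq}/\eqref{innereqt} become formal series in $z^{-r}$ with $\log z$-polynomial coefficients. Matching the lowest power $z^{-nr}$ yields $c_0 r(r+1)=-c_0^n$, i.e.\ $c_0^{n-1}=-r(r+1)$, in the polynomial case; in the trigonometric case it is automatically enforced by the choice of $\tilde\phi_0$ in~\eqref{deftildephi0formal}. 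The leading coefficient $A_1=c_0$ is thereby fixed up to an $(n-1)$-th root of unity.

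At each subsequent order the equation produces a linear relation for $A_k$ of the form
\[
P(k)\,A_k + (\text{strictly } \log z\text{-degree-lowering terms})\,A_k = R_k(A_1,\dots,A_{k-1},\G),
\]
where $P(k)$ is a scalar built from $kr(kr+1)$ and from the linearisation at the leading profile. A direct calculation gives $P(k)=kr(kr+1)-nr(r+1)$ for~\eqref{innereq} and $P(k)=kr(kr+1)-2$ for~\eqref{innereqt}, vanishing precisely at $k=(n+1)/2$ and $k=(n-1)/2$ respectively; in both cases this is an integer only when $n$ is odd. Hence for $n$ even one has $P(k)\neq 0$ for every $k\ge 1$, and the recursion determines each $A_k$ uniquely as a scalar analytic in $\p$; no logarithms are ever introduced, which proves item~1 in $\C[[z^{-r}],\{\p\}]$. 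Uniqueness in the larger space $\C[[z^{-r/2}],\{\p\}]$ then follows from the fact that the only admissible intermediate half-powers of $z^{-r}$ are precisely those generated by the translation $z\mapsto z-c$ acting on $\tilde\phi$, so any such formal solution has the form $\tilde\phi(z-c,\p)$ for some $c\in\C$ and some choice of the $(n-1)$-th root $c_0$.

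For $n=2m-1$ odd, the resonance $P(k^*)=0$ (with $k^*=m$ for~\eqref{innereq}, $k^*=m-1$ for~\eqref{innereqt}) obstructs the recursion at step $k^*$. One overcomes this by introducing a $\log z\cdot z^{-k^*r}$ term: a direct computation of $\Delta^2(\log z\cdot z^{-k^*r})$ shows that, although its $\log z\cdot z^{-k^*r-2}$ contribution vanishes by the resonance identity, a nonzero pure-power $z^{-k^*r-2}$ piece survives and can absorb the residue $R_{k^*}$. Uniqueness is restored by setting $c_{m-1,0}=0$, which removes the translation freedom. The log-degree bound $\deg A_k\le[(k-1)/(m-1)]$ (respectively $[k/(m-1)]$) is then proved by induction on $k$: the nonlinear contributions to $R_k$ are products $A_{k_1}\cdots A_{k_j}$ with $k_1+\cdots+k_j=k-(n-j)$, whose total $\log z$-degree is bounded by $\sum[(k_i-1)/(m-1)]\le[(k-1)/(m-1)]$ via an elementary floor-function inequality, with saturation precisely at the cascade points $k^*,\,k^*+(m-1),\,k^*+2(m-1),\dots$, where the recursion is forced to introduce a fresh power of $\log z$.

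The main difficulty will be this log-degree bookkeeping: one has to verify that the only mechanism increasing $\deg A_k$ is the cascade triggered at $k=k^*$ and that it advances by exactly one unit every $m-1$ steps, so that the stated bound is both achieved and never exceeded. Uniqueness up to translation in the odd case then follows as in the even case, since the kernel of the linearised operator (now acting on formal series with logs) remains one-dimensional, generated by $\partial_z\tilde\phi$, which realises the translation symmetry $z\mapsto z-c$.
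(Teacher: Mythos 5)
Your proposal is correct and follows essentially the same route as the paper: an order-by-order recursion in $z^{-r}$ governed by the coefficient $kr(kr+1)-nr(r+1)$ (the paper's $\lambda_N$, equation~\eqref{eq:coefficientforrecurrence}), which vanishes at an integer only for $n$ odd, where the resonance at $k=(n+1)/2$ is resolved exactly as in the paper by inserting a $\log z\cdot z^{-mr}$ term whose $\Delta^2$-image leaves the surviving pure-power contribution $-(2mr+1)c_{m-1,1}z^{-mr-2}$, with the free coefficient $c_{m-1,0}$ accounting for the translation freedom. Your log-degree bookkeeping via floor-function subadditivity over the nonlinear products is the content of the paper's Lemmas~\ref{lem:XrXrlogdifferencialalgebras}--\ref{lem:auxiliarylemma} in the $\gen_{kr,j}$ notation.
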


Now we deal with the analytic solutions of the
inner equation.
Let us define the complex domains where these solutions are defined.
For any $\r,\g>0$, we introduce
\begin{equation}\label{defDus}
D_{\g,\r}^{\s}=\{z\in \C : |\im z | > -\g \re z + \rho\},\;\;\;\; D_{\g,\r}^{\u} = -D_{\g,\r}^{\s}.
\end{equation}
\begin{figure}[h]
\begin{center}  \includegraphics[width=12cm]{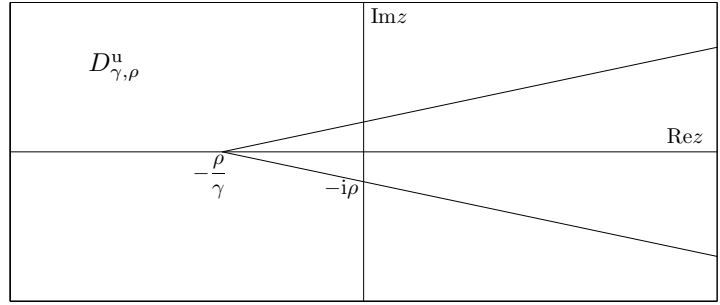}
  \end{center}
    \caption{Unstable domain}\label{dominiestable}
\end{figure}

Let $\tilde{\phi}_0$ be defined by \eqref{deftildephi0formal} in the trigonometric case
and $\tilde{\phi}_0\equiv 0$ in the polynomial case.
Let $\phi_0$ be the truncation up to order $n$ in $z^{-r}$ of the
formal solution provided by
Proposition~\ref{prop:formalsolutionsoftheinnerequation}, that is,
if $n=2m$ with $m\ge 1$,
\begin{equation}
\label{def:phi0even} \phi_0 (z) = \tilde{\phi}_0(z) + \sum_{k=1}^n c_{k-1}(\mu) z^{-kr},
\end{equation}
 and, if $n=2m-1$ with $m\ge 2$, $\phi_0$ in the polynomial case is
\begin{equation}
\label{def:phi0oddp} \phi_0 (z) =
\sum_{k= 1}^n \frac{1}{z^{kr}}\sum_{0\le j \le
\left[\frac{k-1}{m-1}\right]} c_{k-1,j}(\p) \log^j z
\end{equation}
and in the trigonometric case
\begin{equation}
\label{def:phi0oddt} \phi_0 (z) = \tilde{\phi}_0(z)
+\sum_{k= 1}^n \frac{1}{z^{kr}}\sum_{0\le j \le
\left[\frac{k}{m-1}\right]} c_{k-1,j}(\p) \log^j z.
\end{equation}
\begin{theorem}\label{existencetheorem}(Existence theorem)
Let $r=2/(n-1)$ and $c_0$ be such that $c_0^{n-1}=-r(r+1)$. For any
$\g>0$ there exists $\r_0$ big enough such that for any $\r\geq
\r_0$, the
inner equations~\eqref{innereq}
and~\eqref{innereqt} have two analytic solutions
$\phi^{\u,\s}:D_{\g,\r}^{\u,\s} \times B(\p_0)\to \C$ such that
\begin{equation*}
\phi^{\u,\s} (z,\mu) = \phi_0(z)+ \f^{\u,\s}(z,\mu),
\end{equation*}
with
\[
\sup_{(z,\mu) \in D_{\g,\r}^{\u,\s} \times B(\p_0)} |z^{r+2}
\f^{\u,\s}(z,\mu)|<+\infty,
\]
\end{theorem}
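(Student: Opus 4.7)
The plan is to set $\phi = \phi_0 + \psi$ and reduce the theorem to a Banach fixed-point problem for the correction $\psi$ in the space
$\Ein_{\nu} = \{\psi\colon D^{\u,\s}_{\g,\r}\times B(\p_0)\to\C\text{ analytic}\,:\,\|\psi\|_\nu:=\sup|z^\nu\psi(z,\p)|<\infty\}$, with target $\nu=r+2$. Substituting $\phi=\phi_0+\psi$ into~\eqref{innereq} or~\eqref{innereqt} and expanding $g(\phi_0+\psi,\p)=g(\phi_0,\p)+\partial_\phi g(\phi_0,\p)\,\psi + \mathcal{N}(\psi,z,\p)$ with $\mathcal{N}=\gen(\psi^2)$, the invariance equation becomes
$$
\lop(\psi) := \Delta^2\psi - \partial_\phi g(\phi_0,\p)\,\psi = R(z,\p) + \mathcal{N}(\psi,z,\p),\qquad R := g(\phi_0,\p) - \Delta^2\phi_0.
$$
Since $\phi_0$ truncates the formal solution of Proposition~\ref{prop:formalsolutionsoftheinnerequation} at order $z^{-nr}=z^{-r-2}$, a direct computation gives $R=\gen(|z|^{-2r-4})$ on $D^{\u,\s}_{\g,\r}$; in particular $R\in\Ein_{r+4}$ with norm shrinking as $\r\to\infty$.

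The analytic heart of the proof is the construction of a right inverse of $\lop$. To leading order $\partial_\phi g(\phi_0,\p) = A/z^2 + \gen(|z|^{-r-2})$, with $A$ determined by $c_0$ (using $c_0^{n-1}=-r(r+1)$ and $nr=r+2$ one finds $A=(r+1)(r+2)$ in the polynomial case and $A=2$ in the trigonometric one). Hence $\lop$ is a perturbation of the Cauchy--Euler operator $\lop_0:=\partial_z^2 - A/z^2$, whose fundamental solutions are $z^a$ and $z^b$ with $a+b=1$, $ab=-A$, and constant Wronskian $b-a$. Variation of parameters then yields the formal inverse
$$
\lop_0^{-1}\eta(z) = \frac{1}{b-a}\left(z^b\!\int_{z_\infty^+}^z\! s^a\eta(s)\,ds - z^a\!\int_{z_\infty^-}^z\! s^b\eta(s)\,ds\right),
$$
with integration paths going to infinity inside $D^{\u,\s}_{\g,\r}$ in the two admissible directions of the sector. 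Careful estimates for these Euler-type integrals show $\lop_0^{-1}:\Ein_\nu \to \Ein_{\nu-2}$ with operator norm $\gen(1)$ provided $\nu>\max(\re a,\re b)+1$. A true right inverse is then obtained as $\lop^{-1}=\bigl(I+\lop_0^{-1}(\lop-\lop_0)\bigr)^{-1}\lop_0^{-1}$ via Neumann series: both $\Delta^2-\partial_z^2$ (which by Taylor expansion produces extra derivatives and hence extra negative powers of $|z|$ on functions in $\Ein_\nu$) and $\partial_\phi g(\phi_0,\p)-A/z^2=\gen(|z|^{-r-2})$ give a perturbation of small operator norm on $\Ein_{r+2}$ for $\r\ge\r_0$ large enough.

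With $\lop^{-1}$ in hand, the problem reduces to the fixed-point equation $\psi = \mathcal{T}(\psi) := \lop^{-1}(R + \mathcal{N}(\psi))$ on a closed ball of radius $\delta$ in $\Ein_{r+2}$. The term $\lop^{-1}R$ has $\Ein_{r+2}$-norm that tends to $0$ as $\r\to\infty$, while $\mathcal{N}$ is at least quadratic in~$\psi$ with coefficients carrying an additional factor $\phi_0^{n-2}$ in the polynomial case (and an analogous power of $\ee^{\phi_0}$ in the trigonometric case) that gains further decay in $z$. Consequently $\mathcal{T}$ is a contraction on an appropriate ball once $\r$ is large enough, and Banach's fixed-point theorem yields a unique analytic solution $\psi^{\u,\s} \in \Ein_{r+2}$ depending analytically on $\p$. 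The two solutions $\phi^\u$ and $\phi^\s$ correspond to taking the integration paths of $\lop_0^{-1}$ going to infinity inside $D^\u_{\g,\r}$ and $D^\s_{\g,\r}$ respectively.

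The main obstacle is the construction of $\lop_0^{-1}$ with the claimed decay-gaining property on the sectorial domains. Two technical points have to be addressed: first, the discrete shifts $z\mapsto z\pm 1$ appearing in $\Delta^2$ must keep points inside $D^{\u,\s}_{\g,\r}$, which forces one to take $\r$ sufficiently large in terms of $\g$ so that a strip of width~$1$ around the domain is still contained in a slightly enlarged sector of the same shape; and second, sharp bounds on $\int s^a\eta(s)\,ds$ and $\int s^b\eta(s)\,ds$ along the chosen paths require parametrizations along which $|s|$ remains comparable to $|z|$ (for instance straight rays to infinity within the sector), so that the prescribed decay of $\eta$ is correctly transferred to $\lop_0^{-1}\eta$. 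Once this linear machinery is in place, the nonlinear contraction step and the analytic dependence on $\p$ are routine.
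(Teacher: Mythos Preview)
Your overall strategy --- write $\phi=\phi_0+\psi$, reduce to a fixed-point equation for $\psi$ in the weighted space $\Ein_{r+2}$, and solve by contraction --- is exactly what the paper does. The essential difference lies in your choice of the model linear operator to be inverted, and this is where the sketch has a real gap.

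You take as model the \emph{continuous} Cauchy--Euler operator $\mathcal L_0=\partial_z^2-A/z^2$ and propose to treat $\Delta^2-\partial_z^2$ as a perturbation, claiming that by Taylor expansion it ``produces extra derivatives and hence extra negative powers of $|z|$''. The second half of that sentence does not follow on the full domains $D^{\mathrm u,\mathrm s}_{\gamma,\rho}$: a Cauchy estimate converts a derivative into an extra factor $|z|^{-1}$ only if a disk of radius comparable to $|z|$ around the point lies inside the domain, and the sectors $D^{\mathrm u,\mathrm s}_{\gamma,\rho}$ contain points arbitrarily close to their straight boundary with $|z|$ arbitrarily large (take $z$ along a ray parallel to a boundary line). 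At such points one only has radius-$O(1)$ Cauchy disks, so $\psi^{(4)}$ carries the same decay as $\psi$, and $(\Delta^2-\partial_z^2)\psi$ gains nothing over $\psi''$. Your Neumann series for $\mathcal L^{-1}$ then fails to converge, and the contraction estimate collapses.

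The paper avoids this entirely by never comparing $\Delta^2$ with $\partial_z^2$. Instead it introduces
\[
H(z)=(1+z^{-1})^{\ell}-2+(1-z^{-1})^{\ell},\qquad \ell=r+2\ \text{(polynomial)},\ \ \ell=2\ \text{(trigonometric)},
\]
chosen precisely so that $\eta_2(z)=z^{\ell}$ is an \emph{exact} solution of the discrete equation $\Delta^2\eta=H\eta$; a second solution $\eta_1\in\Ein_{\ell-1}$ is obtained by reduction of order using the summation operator $\Delta^{-1}h(z)=\sum_{k\ge1}h(z-k)$. A right inverse of $\lop\psi=\Delta^2\psi-H\psi$ is then given by the \emph{discrete} variation-of-constants formula
\[
\lop^{-1}h=\eta_1\,\Delta^{-1}(\eta_2 h)-\eta_2\,\Delta^{-1}(\eta_1 h),
\]
which maps $\Ein_{\alpha+2}\to\Ein_{\alpha}$ for $\alpha>\ell-1$ with bounds coming only from sums along $z-k$; since $z-k$ stays in $D^{\mathrm u}_{\gamma,\rho}$ whenever $z$ does, no Cauchy estimate and no domain shrinking are needed. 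The only perturbation left in the fixed-point equation is the zeroth-order multiplier $A(z)=Dg(\phi_0)-H(z)\in\Ein_{r+2}$ (plus the quadratic remainder), which is small for $\rho$ large and gives the contraction directly. That choice of an explicitly solvable \emph{discrete} model is the key device your proposal is missing.
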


Now we state the theorem for the difference $\phi^{\u}-\phi^{s}$. First we define the complex domain (see Figure \ref{dominiiner})
\begin{equation}\label{defEdomain}
E_{\g,\r}=D_{\g,\r}^{\u} \cap D_{\g,\r}^{\s} \cap \{ z\in \C:  \im z <0\} \backslash\{z\in \C: |\re z|\leq 1, \; |\im z|\leq \r+\g\},
\end{equation}
where the difference between two solutions of the
inner equation~\eqref{innereq}
$\phi^{\u}-\phi^{\s}$ is defined.

\begin{figure}[h]
\begin{center}
  \includegraphics[width=9cm]{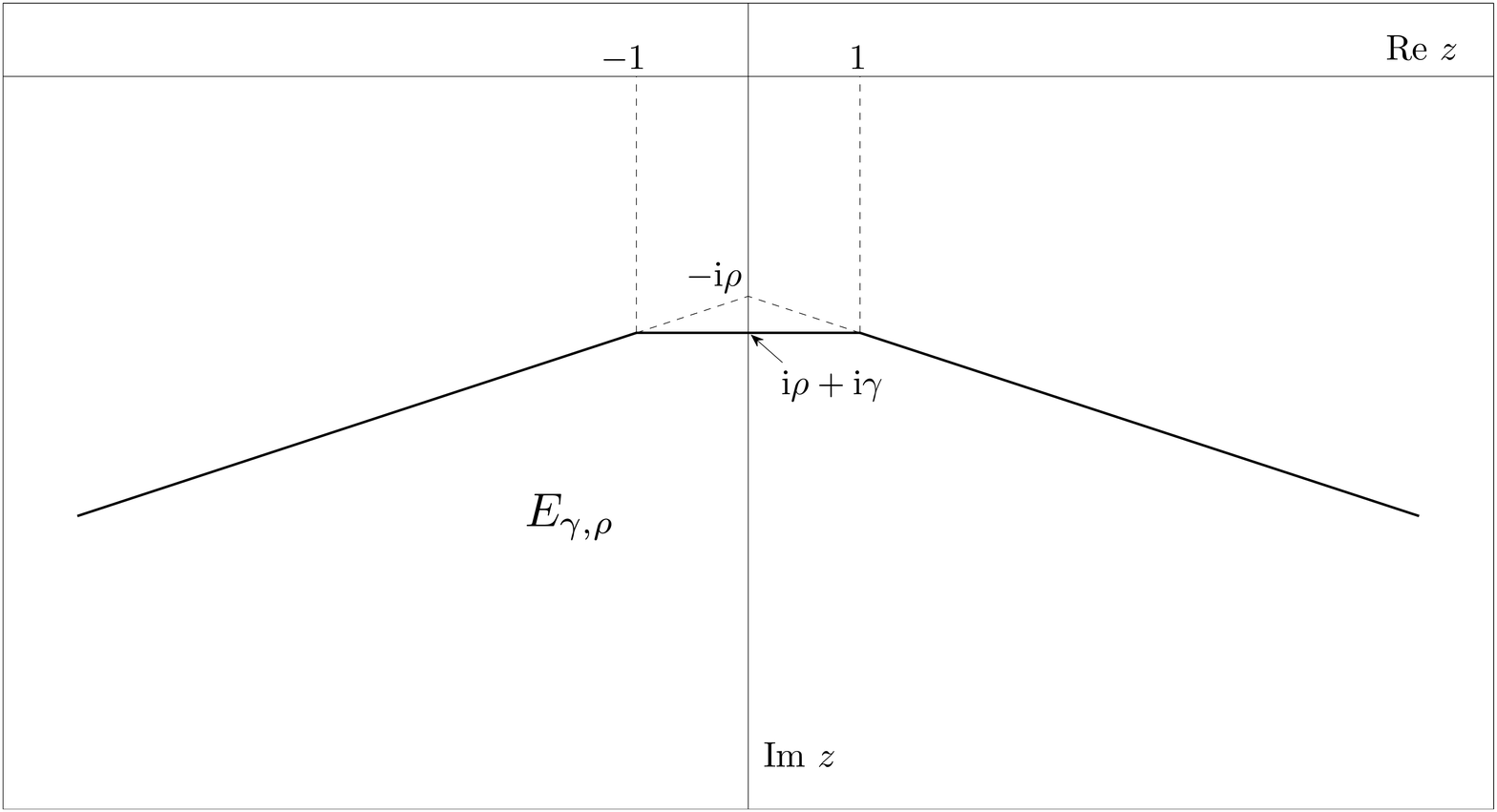}
  \end{center}
  \caption{Inner Domain}\label{dominiiner}
\end{figure}

To unify the notation we introduce the new parameters
\begin{equation}\label{defeH}
\eH=\left \{ \begin{array} {cl} r+2 & \text{polynomial case} \\ 2 & \text{trigonometric case} \end{array} \right.,
\;\;\;\;\;d_{\eH} =\left \{ \begin{array}{cl} c_0  & \text{polynomial case} \\ 1 & \text{trigonometric case} \end{array} \right.
\end{equation}

\begin{theorem}\label{differencetheorem} Let $\phi^{\u,\s}$ be two analytic solutions of equations~\eqref{innereq}
and~\eqref{innereqt} satisfying the conditions stated in
Theorem~\ref{existencetheorem}.

Their difference $\phi^{\u}-\phi^{\s}:E_{\g,\r}\times B(\p_0) \to \C$, can be expressed as
\begin{equation}\label{exp:theorem:difference}
\phi^{\u}(z,\p)-\phi^{\s}(z,\p) =  \spli_1(z,\mu)\sum_{k<0}  p_k^1(\p)\ee^{2\pi \ic kz} + \spli_2(z,\mu) \sum_{k<0} p_k^2 (\p)\ee^{2\pi \ic kz}
\end{equation}
with $p_k^1, p_k^2$ analytic functions in $B(\p_0)$ and $\spli_1,\spli_2$ satisfying that:
\begin{enumerate}
\item their wronskian
$$
W(\spli_1,\spli_2 ):=\left | \begin{array}{cc} \spli_1(z,\p) & \spli_2(z,\p) \\ \spli_1(z+1,\p) & \spli_2(z+1,\p) \end{array}\right | =1
$$
\item there exists a constant $C$ such that for any $z\in E_{\g,\r}$ and $\p \in B(\p_0)$,
$$
\big \vert z^{1-\eH} \ee^{2\pi \ic z} \big (\spli_1(z,\p) - \partial_z \phi^{\s}(z,\p)\big) \big \vert \leq C , \;\;\;\;\;
\left \vert\frac{z^{-\nu}}{\log^{\sigma} z}\left (\spli_2(z,\p) - \frac{z^{\eH}}{r d_{\eH}(2\eH-1)}\right )\right \vert \leq C
$$
with $\nu=\eH-r$, $\sigma=0$ if $n>3$, $\nu=\eH-1$ if $n\leq 3$, $\sigma=0$ if $n=2$ and $\sigma=1$ if $n=3$.
\end{enumerate}
\end{theorem}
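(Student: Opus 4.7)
The plan is to treat $\psi := \phi^{\u} - \phi^{\s}$ as a solution of a linear second-order difference equation, construct a fundamental system $\{\zeta_1,\zeta_2\}$ with unit Wronskian and the prescribed asymptotics, and then expand the resulting one-periodic coefficients in Fourier series. Subtracting \eqref{innereq} (resp.~\eqref{innereqt}) for $\phi^{\u}$ and $\phi^{\s}$ gives
\[
\Delta^2 \psi = V(z,\mu)\,\psi,\qquad V(z,\mu) := \int_0^1 g_\phi\bigl(\phi^{\s}+t\psi,\mu\bigr)\,dt.
\]
Using $\phi^{\u,\s} = \phi_0 + O(z^{-r-2})$ from Theorem~\ref{existencetheorem}, one finds $V(z,\mu) = g_\phi(\phi_0(z),\mu) + O(z^{-r-2})$; in both the polynomial and the trigonometric cases a direct computation yields $z^2 V(z,\mu) \to \ell(\ell-1)$ at infinity, so the linearized equation is asymptotically of Euler type with characteristic exponents $1-\ell$ and $\ell$. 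This matches the leading behaviors $\partial_z \phi_0 \sim -r d_\ell\,z^{1-\ell}$ and $z^\ell/[r d_\ell(2\ell-1)]$ appearing in the statement, and predicts the factor $\log^\sigma z$ in the resonant low-$n$ cases.

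For the fundamental system, observe that $\partial_z \phi^{\s}$ satisfies $\Delta^2(\partial_z \phi^{\s}) = g_\phi(\phi^{\s},\mu)\,\partial_z \phi^{\s}$, so the ansatz $\zeta_1 = \partial_z \phi^{\s} + w_1$ reduces to the forced equation $(\Delta^2 - V)w_1 = (V - g_\phi(\phi^{\s},\mu))\,\partial_z \phi^{\s}$, whose right-hand side is of order $\psi \cdot z^{1-\ell}$. I would solve this by a fixed-point argument in a Banach space of functions weighted by $|z^{1-\ell}e^{2\pi iz}|$, using a right inverse of $\Delta^2 - V$ on $E_{\gamma,\rho}$ tailored to reproduce the exponential decay inherited from $\psi$ in the lower half-plane. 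The second solution is then built by reduction of order: setting $\zeta_2 = u\,\zeta_1$ and demanding $W(\zeta_1,\zeta_2) = 1$ yields the first-order equation $\zeta_1(z)\zeta_1(z+1)[u(z+1)-u(z)] = 1$, which can be summed explicitly, and an asymptotic analysis of the sum produces $\zeta_2 \sim z^\ell/[r d_\ell(2\ell-1)]$ together with the $\log^\sigma z$ correction in the resonant low-$n$ cases.

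With $\{\zeta_1,\zeta_2\}$ at hand, any solution of $\Delta^2\psi = V\psi$ decomposes uniquely as $\psi = P_1\zeta_1 + P_2\zeta_2$ with
\[
P_1(z) = \psi(z)\zeta_2(z+1) - \psi(z+1)\zeta_2(z),\qquad P_2(z) = \zeta_1(z)\psi(z+1) - \zeta_1(z+1)\psi(z),
\]
and both $P_1,P_2$ are one-periodic, since the Wronskian of any two solutions of $\Delta^2\cdot = V\cdot$ satisfies $W(z+1) = W(z)$ by a direct computation using the equation. The domain $E_{\gamma,\rho}$ is invariant under $z \mapsto z+1$ and unbounded in the $-\im z$ direction, so each $P_i$ admits a convergent Fourier expansion $P_i(z) = \sum_{k\in\Z} p_k^i\, e^{2\pi ikz}$ on horizontal strips. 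It remains to show that $p_k^i = 0$ for every $k\ge 0$: both $\phi^{\u}$ and $\phi^{\s}$ being asymptotic to the unique formal solution $\tilde\phi$ of Proposition~\ref{prop:formalsolutionsoftheinnerequation} to all orders, $\psi$ decays faster than any negative power of $z$ as $\im z\to -\infty$; but in this direction $e^{2\pi ikz}\zeta_2(z)$ grows at most polynomially for $k=0$ and exponentially for $k>0$, while $e^{2\pi ikz}\zeta_1(z)$ still only decays polynomially for $k=0$, so super-polynomial decay of $\psi$ forces $p_k^i=0$ whenever $k\ge 0$. The principal obstacle is to arrange the fixed-point construction of $\zeta_1$, the choice of right inverse of $\Delta^2 - V$ on $E_{\gamma,\rho}$, and the super-polynomial estimate on $\psi$ in a mutually consistent way, since the exponential bound on $\zeta_1 - \partial_z\phi^{\s}$ stated in item~(2) is ultimately driven by the smallness of $\psi$ itself.
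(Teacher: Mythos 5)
Your skeleton coincides with the paper's (Section~\ref{sec:difference}): write $\Delta^2\psi=V\psi$, build a fundamental system with $\zeta_1=\partial_z\phi^{\s}+w_1$ by a fixed point and $\zeta_2$ by reduction of order with unit Wronskian, decompose $\psi$ with $1$-periodic coefficients and discard the nonnegative Fourier modes. However, there are two genuine gaps.

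First, the step that kills the modes $k\ge 0$ rests on the claim that $\psi$ decays faster than any negative power of $z$ because $\phi^{\u,\s}$ are ``asymptotic to the formal solution to all orders''. Theorem~\ref{existencetheorem} gives only $\phi^{\u,\s}=\phi_0+O(z^{-r-2})$, where $\phi_0$ is the truncation at order $n$; no all-orders asymptotic matching is available, and proving it would amount to redoing the existence theorem for every truncation order. The paper does not need it: in Lemma~\ref{diflemma} it computes $p_i$ from the Wronskian formulas, rewrites $W(\Theta,\eta_2)=\Theta\cdot\Delta\eta_2-\Delta\Theta\cdot\eta_2$, and uses that $\Delta$ gains one power of $z^{-1}$ (a Cauchy estimate on a slightly smaller domain), so that $p_1\in\mathcal{Y}_1$ and $p_2\in\mathcal{Y}_{r+4}$ follow from the mere polynomial bound $\Theta\in\mathcal{Y}_{r+2}$; a $1$-periodic function tending to $0$ as $\im z\to-\infty$ then has only negative modes. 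You should replace the super-polynomial claim by this elementary argument.

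Second, your construction of $\zeta_1$ with the exponentially small correction of item~(2) is circular as ordered: the forcing $(V-g_\phi(\phi^{\s}))\partial_z\phi^{\s}$ contains $\psi$ as a factor, so placing $w_1$ in a space weighted by $|z^{1-\ell}e^{2\pi\ic z}|$ presupposes the exponential smallness of $\psi$, which in your scheme is only obtained \emph{after} the decomposition in the fundamental system. You flag this tension but do not resolve it. The paper's resolution is a two-stage bootstrap: first it builds a cruder system $\hat\eta_1=\partial_z\phi^{\s}+O(z^{-r-3})$, $\hat\eta_2$ (Lemma~\ref{sollinearequationinner}), applies the decomposition lemma to conclude $|z^{-\ell}e^{2\pi\ic z}\Theta(z)|\le K$ (estimate~\eqref{bounddifexp1}); only then, with the forcing now known to be exponentially small, does it rerun the fixed point for $e^{2\pi\ic z}(\zeta_1-\partial_z\phi^{\s})$ in a polynomially weighted space (Corollary~\ref{corollaryleinner}) and reapply the decomposition to $\zeta_1,\zeta_2$. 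Incorporating this intermediate pass is essential; without it the stated bound on $\zeta_1-\partial_z\phi^{\s}$ is not obtained.
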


From now on we will skip the dependence on $\mu$ being always analytic.

\section{Numerical results}
\label{numericalresults}

In this section we present some numerical results concerning the
generalized standard map~\eqref{def:generilizedstandardmaps} given
by the functions $\ff_1(x,h)=\varepsilon (x-x^3) - \varepsilon^2 x^7$ in~\eqref{gsm7}
and $\ff_2(x,h)= \varepsilon(x-x^3)$. We recall here that $\varepsilon = 4 \sinh^2(h/2)$.

We notice that both functions $\ff_1, \ff_2$ satisfy the hypotheses of
Section~\ref{subsec:innerequation}.  Henceforth, as we show in Section~\ref{Examplesinner}, we can construct the
inner equation for the generalized standard map induced by them:
\begin{equation}
\label{eq:innerequation7} \Delta^2(\phi) = - \phi^7\;\;\;\;\; \text{and}\;\;\;\;\; \Delta^2 (\phi) = -\phi^3.
\end{equation}
The first one corresponds to $\ff_1$ and the second one to $\ff_2$.

Let $$\difin := \phi^{\u}-\phi^{\s},$$ be the difference between the
two solutions of the
inner equation~\eqref{eq:innerequation7}
given by Theorem~\ref{existencetheorem}. First of all, in a general
setting, we relate the main term of $\difin$ with the Lazutkin
invariant for the standard map \eqref{def:generilizedstandardmaps}
induced by $\ff$. Next, we compute the actual Lazutkin invariant for
the maps defined by $\ff_1$ and $\ff_2$ which is computed
numerically by using multiprecision routines. After we summarize the
method to compute the main term of the difference $\difin :=
\phi^{\u}-\phi^{\s},$ by exploiting the theoretical framework we
have developed. One aspect worth to remark is that these
computations have been performed through standard {\tt long double}
precision arithmetic.

A similar, but more detailed, numerical comparison between the
Lazutkin invariant and the difference $\difin$ is performed
in~\cite{GG11} for the Swift-Hohenberg equation.

\subsection{The relation between the Lazutkin invariant and $\difin$}
For computing the first
asymptotic term of $\difin$ we now take advantage from the fact that
we have an alternative expression for $\difin$ by using the
functions $\spli_{1}$ and $\spli_{2}$ given in
Theorem~\ref{differencetheorem}. Indeed, we actually can write the
difference $\difin$ as
\begin{equation*}
\difin(z)=  \spli_1(z)\sum_{k<0}p_k^1
\ee^{2\pi \ic kz} + \spli_2(z) \sum_{k<0} p_k^2 \ee^{2\pi
\ic kz}
\end{equation*}
with $p_j(z)=\sum_{k<0} p_k^j\ee^{2\pi \ic kz}$, $j=1,2$ $1$-periodic functions.
We recall that by Theorem \ref{differencetheorem}
$W(\spli_1,\spli_2)=1$ and henceforth $p_1=W(\difin,\spli_2)$ and $p_2 = W(\spli_1,\difin)$.

On the one hand, we introduce the new quantity $\omega_{{\rm in}}(z)$:
\begin{equation}\label{innerinvLazutkin2}
\omega_{{\rm in}}(z):= -\frac{d}{dz} W(\difin,\spli_1)(z) =\frac{d}{dz} p_2(z) =\sum_{k<0} 2\pi \ic k p_k^2\ee^{2\pi \ic kz} \thickapprox
-2\pi \ic p_{-1}^2 \ee^{-2\pi \ic z}.
\end{equation}
The last equality has been deduced as $\im z \to -\infty$. On the other hand, note that by using the first approximations of $\spli_1$ and $\spli_2$ in Theorem \ref{differencetheorem},
since $\eH>0$, $\spli_1(z) \to 0$ as $\im z \to -\infty$ and $\spli_2(z) =\gen(z^{\eH})$,
the main term of $\difin$ is
\begin{equation*}
\difin(z)=\spli_1(z) p_1(z) + \spli_2(z)p_2(z) \thickapprox \spli_2(z) p_2 (z)\thickapprox z^{\eH}\frac{p_{-1}^2}{r d_{\eH}(2\eH-1)} \ee^{-2\pi i z}.
\end{equation*}
We recall here that only $p_{-1}^2$ is unknown, the other quantities are defined in terms of the inner equation.
Henceforth, both $\omega_{{\rm in}}(z)$ and $z^{-\eH}\difin(z)$, are asymptotically equivalent.

In order to compare the numerical results with our theoretical
framework we will gather in a rather informal way several facts,
some of them not proven. In particular, to transform assumptions (A1) and (A2) below
into proven facts would require involved arguments even for particular cases.
By this reason, we will avoid precise statements. The chain of reasoning is a slight modification
of the one in~\cite{MSS11a}, which also follows~\cite{Laz84,Gel99}.

Let $f$ be a real analytic function satisfying the hypotheses in Section~\ref{firstasumptions}
and Section~\ref{subsec:innerequation}.
We first remark that there exists a solution of the invariance
equation~\eqref{eq:invariance} induced by $f$, $x^{\u}(t)$,  $\ic \pi$-antiperiodic,
entire and real analytic in $t$, such that $\lim_{\re t \to -\infty}
x^{\u}(t) = 0$ and $x^{\u}(0)=x^{\u}(-h)$ (and $x^{\u}(t)-x^{\u}(t-h) >0$, for
$t\le 0$).
Then, the function $x^{\s}(t) = x^{\u}(-t)$ is also a solution
of~\eqref{eq:invariance}, with the same regularity, satisfying
$\lim_{\re t \to \infty} x^{\s}(t) = 0$. Hence, $\gamma^{\u,\s}(t) =
(x^{\u,\s}(t),x^{\u,\s}(t)-x^{\u,\s}(t-h))$ are natural parametrizations
of the invariant manifolds of the origin. We notice that $p = \gamma^{\u}(0) = \gamma^{\s}(0) =
(x^{\u,\s}(0),0)$ is the \emph{first} homoclinic point. Let
$D(t) = x^{\s}(t)-x^{\u}(t)$.

Using the $h$-step Wronskian
\[
W_h(u,v)(t) = \begin{vmatrix} u(t) & v(t) \\
u(t)-u(t-h) & v(t)-v(t-h)
\end{vmatrix} = \begin{vmatrix} u(t) & v(t) \\
\Delta_h u(t) & \Delta_h v(t)
\end{vmatrix}
\]
the Lazutkin invariant~\eqref{def:Lazutkininvariant} can be written
as
\begin{equation}
\label{eq:lazutkininvariantwronskian} \omega(p) = \det(\dot
\gamma^u, \dot \gamma^s)_{\mid t=0} = \frac{d}{dt} \det(\dot
\gamma^u, \gamma^s-\gamma^u)_{\mid t=0}= \frac{d}{dt} W_h(\dot x^u,
D)_{\mid t=0}.
\end{equation}

Since both $x^u$ and $x^s$ are solutions of the second order
difference equation~\eqref{eq:invariance}, their difference $D$ also
satisfies a \emph{linear} second order equation, namely
\begin{equation}
\label{eq:D} \Delta_h^2 D(t) = -\left (\int_0^1  \frac{\partial}{\partial x}\ff(s x^{\s}(t)+(1-s)x^{\u}(t) , h) \,ds \right ) D(t).
\end{equation}
Notice that, if $x^{\u}$ is close to $x^{\s}$, then
equation~\eqref{eq:D} is close to the linearization of the
invariance equation~\eqref{eq:invariance} around $x^{\u}$. Hence, our
first assumption is that
\begin{enumerate}
\item[(A1)]
there is a (real analytic) solution $\eta_1$ of
equation~\eqref{eq:D} close to~$\dot x^{\u}$.
\end{enumerate}
Let $\eta_2$ be another (real analytic) solution of~\eqref{eq:D}
with $W_h(\eta_1,\eta_2) = 1$, which can be obtained by the
``variation of constants'' method. Hence, we can write $D =
c_1\eta_1+c_2\eta_2$ where $c_1$ and $c_2$ are the $h$-periodic
functions
$c_1 = W_h(D,\eta_2)$ and $c_2 =
W_h(\eta_1,D)$. Substituting this expression for $D$
into~\eqref{eq:lazutkininvariantwronskian} and using that $\eta_1$
is close to $\dot x^u$ we have that
\begin{equation}
\label{eq:Lazutkininvariantsecondformula} \omega(p) \thickapprox
\frac{d}{dt} W_{h}(\eta_1,D)_{\mid t=0}.
\end{equation}

Since $\ff$ satisfies the hypotheses of Section \ref{subsec:innerequation},
we can construct an
inner equation
associated to the standard map induced by$f$. The second assumption is
\begin{enumerate}
\item[(A2)] there exists $\chi \in \C$ (which can depend on $h$) such that
for values of $t$ satisfying $|t-\chi| = \gen(h)$, $x^{\u,\s}(t)$ are
close to $h^{-\alpha} \lambda \phi^{\u,\s}((t-\chi)/h)$. Here $\phi^{\u,\s}$ are the solutions of the
inner equation~\eqref{innereq}
given by Theorem~\ref{existencetheorem} and $\alpha, \lambda$
are both parameters introduced in Section \ref{subsec:innerequation}.
Since $\ff$ is real analytic one can assume that $\im \chi >0$.
\end{enumerate}
As a consequence, since, by
Theorem~\ref{differencetheorem}, $\spli_1(z) = \partial_z
\phi^{\u}(z) + \gen(z^{r+1} \ee^{2\pi \ic z})$,
\[\dot x^{\u}(t) \thickapprox h^{-\alpha -1} \lambda \frac{d}{dz}
\phi^{\u}((t-\chi)/h)\thickapprox h^{-\alpha-1} \lambda
\spli_1((t-\chi )/h).
\]
Recall now that $
p_2(z) =  -W(\difin,\spli_1)(z)
$.
Hence, taking into account the scaling and assumption~(A1), for values of $t$ close to
$\chi$,
\begin{equation*}
W_h(\eta_1, D) (t) \thickapprox W_h( h^{-\alpha-1}\lambda \spli_1,
h^{-\alpha} \lambda\difin)((t-\chi)/h) = h^{-2\alpha -1} \lambda^2 p_2\big ((t-\chi)/h \big).
\end{equation*}
Then, since $W_h(\eta_1, D)(t)$ and
$W(\spli_1, \difin)((t-\chi)/h\big )$ are both $h$-periodic and
that the first one is a real analytic function
we easily have that, for real $t$
\[
\frac{d}{dt} W_h(\dot x^{\u},D)(t) \thickapprox 2 h^{-2\alpha-2}
 \re \left (\lambda^2 \cdot \frac{d}{dz} p_2\big((t-\chi)/h\big )\right )
= 2 h^{-2\alpha -2}  \re \left (\lambda^2 \cdot \omega_{{\rm in}}\big ((t-\chi)/h\big )\right )
\]
with $\omega_{{\rm in}}$ defined in~\eqref{innerinvLazutkin2}.
Hence, evaluating at $t=0$,
\begin{equation}\label{innerinvLazutkin1}
\omega(p) \thickapprox 2 h^{-2\alpha-2}  \re \left (\lambda^2 \cdot\omega_{{\rm in}}(-\chi/h)\right )
.
\end{equation}
Our goal now is to check numerically the above formula for the maps induced by $\ff_1$ and $\ff_2$.

\subsection{The \emph{limit flow} and its singularities}\label{section:limitflow}
In the cases of the Chirikov standard map and the perturbations of
the McMillan map in~\cite{Gel99} and~\cite{MSS11a}, resp, $\chi =
\ic \pi/2$ is the closest to the real line singularity of the
homoclinic orbit of the limit flow~\eqref{def:limitfloweq2}. In the
maps induced by $\ff_1(x,h) =  \e (x - x^3) - \e^2 x^7$ and
$\ff_2(x,h)= \e (x- x^3)$ under consideration, the closest to the
real line singularity of the homoclinic of the limit flow $\ddot{x}
= x-x^3$ is also $\ic \pi/2$ (see Section \ref{discrepantexample}).
Nevertheless, our numerical computations show that it is not the
right guess for~$\chi$ in the case of~$\ff_1$. For this reason, we
consider the \emph{higher order} (in $h$) limit flow
\begin{equation}
\label{eq:alternatelimitflow}
 \ddot x = x - x^3 - \e x^7.
\end{equation}
The parametrization, $x_0(t,h)$, of the homoclinic loop to the
origin such that $\dot x_0(0,h) = 0$ has a singularity at
\[
\rho(h) = \int_{x_0(0,h)}^{+\infty} \frac{dx}{\sqrt{x^2/2 -
x^4/4 - \e x^8/8}},
\]
where $x_0(0,h) = \sqrt{2} + O(h^2)$ is the positive root of $x^2/2
- x^4/4 - \e x^8/8$ and the integral is computed along the real
line.  The other singularities can be obtained changing the path of
integration. It can be seen that
\begin{equation}
\label{chiexpansion} \rho(h) = \ic \frac{\pi}{2} - \ic
\frac{2^{1/4} \Gamma(3/4)^2}{\sqrt{\pi}} h^{1/2} + O(h^{3/2}).
\end{equation}
We remark that, although the singularities of the homoclinic
of~\eqref{eq:alternatelimitflow} tend to the singularities of the
limit flow $\ddot{x}=x-x^3$~\eqref{def:limitfloweq2} (in a rather slow way), they are
of a different type: whereas the latter are poles, the former are
branching points.

We choose the values
$\chi = \ic \frac{\pi}{2} - \ic
\frac{2^{1/4} \Gamma(3/4)^2}{\sqrt{\pi}} h^{1/2}$ for $\ff_1$ and $\chi = \ic \frac{\pi}{2}$ for
$\ff_2$ and we will assume that (i) holds for them.

\subsection{Numerical computations}
We define now
\begin{equation}\label{definiciontildeomega}
\tilde{\omega}(h)= h^{2\alpha + 2} \lambda^{-2}\ee^{2\pi |\chi|/h}\omega(p),\;\;\;\;\;\;
\tilde{\omega}_{{\rm in}} (z)= 2 \ee^{2\pi \ic z} \re \left (\omega_{{\rm in}}(z)\right )
\end{equation}
taking $\lambda=1$ and on the one hand
$\alpha= 2/3$ for $\ff_1$ and on
the other hand $\alpha=1$ for
$\ff_2$. We note that, since $\chi$ has no real part, checking formula \eqref{innerinvLazutkin1}
is equivalent to check that
\begin{equation*}
\tilde{\omega}(h) \thickapprox \tilde{\omega}_{{\rm in}}\big (-\chi /h\big) \Leftrightarrow \lim_{h\to 0} \tilde{\omega}(h) = \lim_{\im z \to -\infty} \tilde{\omega}_{{\rm in}}(z).
\end{equation*}

First we show the results for $\tilde{\omega}(h)$.
We have computed numerically this quantity by
using multiprecision routines written in PARI-GP. In the following figure we show the computed values
for $\ff_1(x,h)=\e(x-x^3)-\e^2 x^7$ and for
the map induced by $\ff_2(x,h) = \varepsilon (x-x^3)$. Let us denote by $\tilde \omega_i(h)$ the value of $\tilde \omega(h)$ for
the corresponding maps $f_i$, $i=1,2$.
We have added a correction factor
${\rm esc} = 85 \cdot 10^{-4}$ in order to have the same magnitude for both values of $\tilde{\omega}(h)$.
\begin{figure}[h]
\begin{center}  \includegraphics[width=12cm]{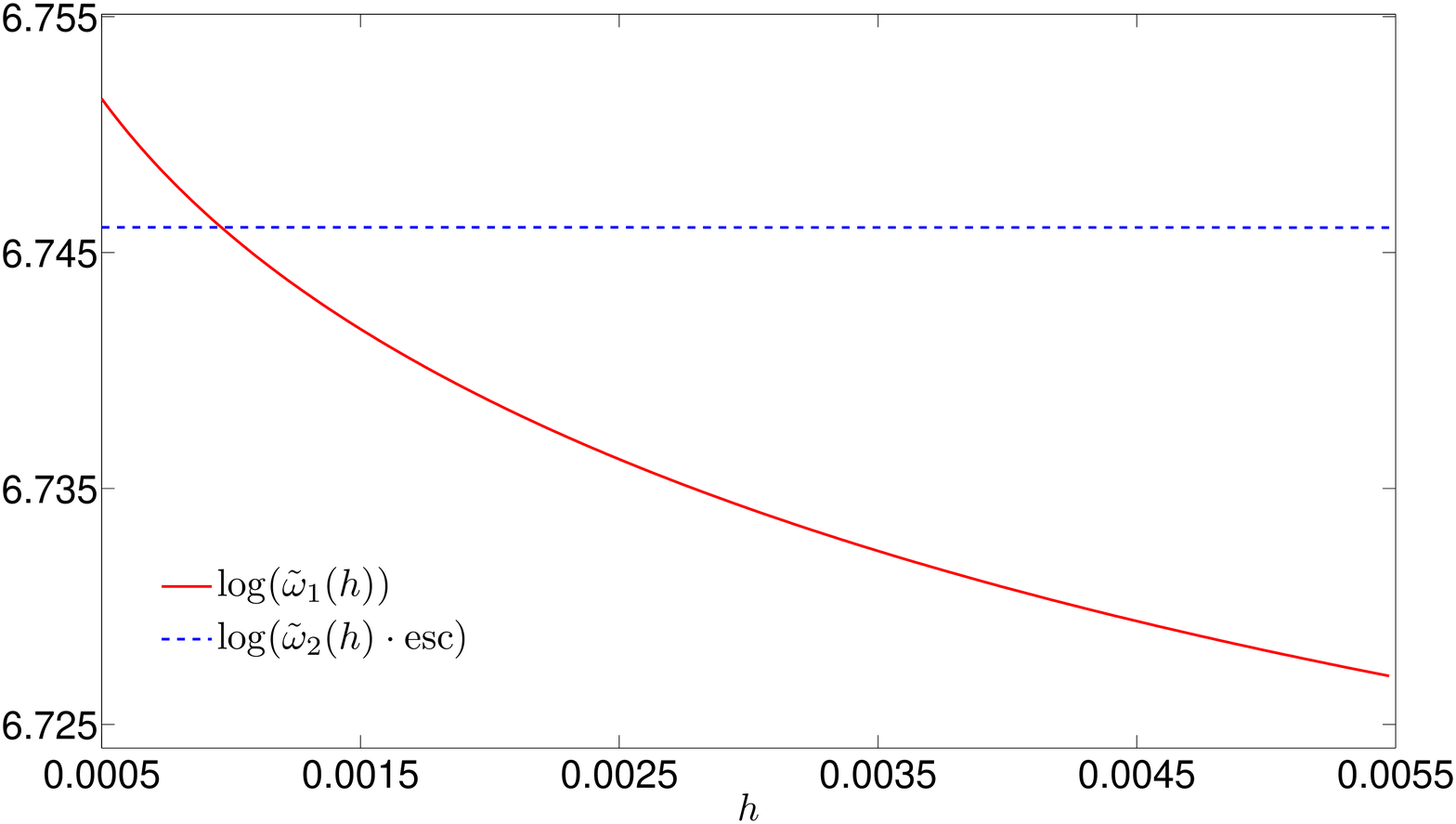}
  \end{center}
    \label{figureomegatilde}
\end{figure}

These numbers have been
obtained computing explicitly $\omega(p) = \det({\dot
\gamma}^u(0),\dot \gamma^s(0))$, following the strategy
in~\cite{DR99}. Due to the exponentially small behavior of this
quantity, it has been necessary to compute $\dot \gamma^{u,s}(0)$
with increasing accuracy, thus making impossible to achieve very
small values of $h$.

Notice that, in the case of the map induced by $\ff_2(x,h) = \varepsilon
(x-x^3)$, the values of $\tilde \omega_2(h)$ converge quite fast, when
$h$ becomes smaller, to a constant value
\begin{equation}\label{tildeomegabuenon3}
\tilde{\omega}_2(h) \thickapprox 1.00083 \cdot 10^5
\end{equation}
In the case of the map induced by $\ff_1(x,h) = \e(x-x^3)-\e^2 x^7$, the
convergence of the values of $\tilde \omega_1(h)$ is slower, as the
previous figure shows. However, computing $\tilde \omega_1(h)$ for
$h=1/2000+k/40000$, $k=0\dots 199$, and making some assumptions on
the form of the asymptotic expansion of $\tilde \omega_1(h)$ in~$h$,
it is possible to extrapolate the limit value with better accuracy.

In this way, we have obtained that it is
\begin{equation}\label{tildeomegabuenon7}
\tilde{\omega}_1(h)\thickapprox 871.683
\end{equation}
We remark that, with the computed data, in which each value of
$\tilde \omega_{i}(h)$, $i=1,2$, has a few hundreds of correct digits, it would
be possible to obtain a better approximation of this value, and also
to compute the coefficients of the asymptotic expansion. Since our
intention was to compare the results obtained by the analysis of the
solutions of the
inner equation,
we have not pursued in this direction.

Now we compute $\tilde{\omega}_{{\rm in}}(z)$
. By definition \eqref{innerinvLazutkin2} of $\omega_{{\rm in}}(z)$ and \eqref{Delta1} of the operator $\Delta$,
\begin{equation}\label{formulatildeomegain}
\tilde{\omega}_{{\rm in}}(z)
 = 4\pi \ee^{2\pi \ic z} \re \big(\difin(z) \cdot \Delta (\partial_z \phi^{\s})(z) -\partial_z \phi^{\s}(z)\cdot \Delta (\difin)(z)\big )
+\mathcal{O}(\ee^{-2\pi \ic z}z^{2r+2},\ee^{-2\pi \ic z}),
\end{equation}
where we have used that by Theorem \ref{differencetheorem}, $\spli_1(z) - \partial_z \phi^{\u}(z) = \gen ( \ee^{-2\pi \ic z} z^{r+1})$.

For symmetry reasons, we choose $z=-i\r$ with $\r\in [2.25,7]$. We have used
{\tt long double} precision in {\tt C} for calculating $\phi^{\s,\u}(z),  \partial_z\phi^{\s,\u}(z) $ . The strategy was suggested in \cite{GL01}
\begin{itemize}
\item First we compute the formal series $\widetilde{\phi}_N$ up to order $N$ big enough. We know that the solutions $\phi^{\s,\u}$ are close
to $\widetilde{\phi}_N$ if $|z|$ is big enough. Analogously for $\partial_z \phi^{\s,\u}$.
\item We evaluate the formal series $\widetilde{\phi}_N(z\pm k)$ and $\partial_z \widetilde{\phi}_N(z\pm k)$ with $k\in \mathbb{N}$ big enough.
\item Since both $\phi^{\s,\u}$ satisfy the
inner equation, we obtain $\phi^{\s,\u}(z)$ and $\phi^{\s,\u}(z+1)$ recurrently.
Analogously for
$\partial_z
\phi^{\s,\u}(z)$ and $\partial_z \phi^{\s,\u}(z+1)$
%
\end{itemize}

We have computed $\tilde{\omega}_{{\rm in}}(z)$
for the
inner equations~\eqref{eq:innerequation7}.
Our results are given in the following picture,
where we have added the scaling factor ${\rm esc} = 871 \cdot 10^{-5}$.

\begin{figure}[ht]
\begin{center}  \includegraphics[width=12cm]{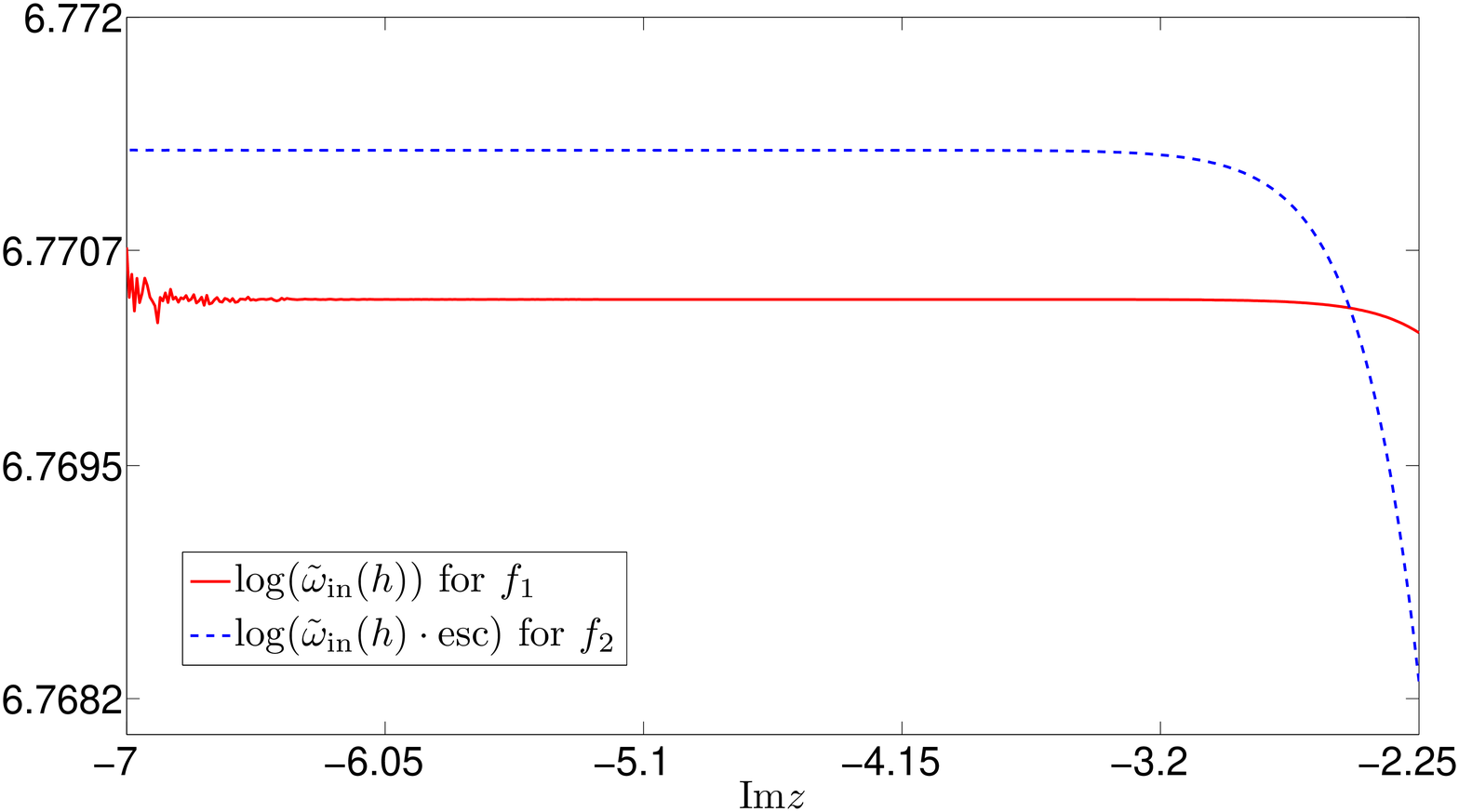}
  \end{center}
    \label{figureomegainner}
\end{figure}
We can observe that, on the one hand when $\im z \in [-3,-2]$ the theoretical error in \eqref{formulatildeomegain} is big. On the other hand,
when $\im z \in [-7,-6]$ the round-off errors (for $f_1$) begin to be bigger than the theoretical error and hence the computed values have noise.
Nevertheless for values of $\im z\in [-5,-3]$ the computed values of $\tilde{\omega}_{{\rm in}}(z)$ behave like a constant. More precisely,
we have found $\tilde{\omega}_{{\rm in}}(z) =871.6833\dots $ for $\Delta^2 (\phi)=-\phi^7$ and
$\tilde{\omega}_{{\rm in}}(z) =1.000832 \dots 10^5$ for $\Delta^2 (\phi)=-\phi^3$
which agree with the results for $\tilde \omega(h)$ given in \eqref{tildeomegabuenon3}
and \eqref{tildeomegabuenon7}.

\section{Formal solutions of the inner equation}\label{sec:formalsolution}
In this section we prove the existence of formal solutions of the
inner equation~\eqref{innereq}.
The proof of the existence of formal solutions of
equation~\eqref{innereqt} follows the same procedure. Hence, we skip
it.

We start by defining the spaces which these formal solutions belong
to. For $n\in \mathbb{N}$ and $r(n-1)=2$, we define
\begin{equation}
\label{def:Xr} X_r = \Big\{ \phi(z) = \sum_{k\ge1}
\frac{c_{k-1}}{z^{kr}}|\; c_k \in \C\Big\},
\end{equation}
the space of formal power series in $x^{-r}$ without constant term,
and, if $n = 2m-1$, $m\ge 2$, that is, $r = 1/(m-1)$,
\begin{equation}
\label{def:Xrlog} X_r^{\log} =  \Big\{ \phi(z) = \sum_{k\ge1}
\sum_{0 \le j \le \left[\frac{k-1}{m-1}\right]}
c_{k-1,j}\frac{\log^j z}{z^{kr}}|\; c_{k,j} \in \Big\},
\end{equation}
where $[x]$ denotes the integer part of $x$, the space of formal
power series in $x^{-r}$ and $\log z$, with the power of $\log z$
bounded by the power of $x^{-r}$, without constant term.

We will say that $\phi=\gen_{kr}$, with $k\in \N$, if and only if
$z^{kr} \phi \in \C[[z^{-r}]]$ is a power series with terms
$z^{-jr}$ for $j\geq 0$. We will also use $\gen_{kr,j}$ in
$X_r^{\log}$, with $k\in \N$, $j\in \N\cup\{0\}$, meaning that $\phi
= \gen_{kr,j}$ implies that $ z^{kr} (\log z)^{-j} \phi(z) $ is a
formal power series with terms of the form $z^{-k'r}\log^{j'} z $,
with $k'\ge 0$ and $j' \ge -j$ such that $j' \le 0$ whenever $k' =
0$. We keep both notations in order to emphasize that $\gen_{kr}$ is
a series without logarithms, while $\gen_{kr,0}$ is a series whose
leading term does not have logarithms.

We collect several properties of these spaces in the following
lemma, whose proof is straightforward.

\begin{lemma}
\label{lem:XrXrlogdifferencialalgebras} Let $n\ge 2$, $r=2/(n-1)$
and $\gg$ an analytic function around the origin with $\gg(y) = A
y^{\ell}+ \gen(y^{\ell+1})$, for some $\ell \in \N\cup\{0\}$. The
spaces $X_r$, for $n$ even, and $X_r^{\log}$, for $n$ odd, have the
following properties:
\begin{enumerate}
\item
$X_r$ and $X_r^{\log}$ are invariant by the formal differential
operator $\frac{\partial^2}{\partial z^2}$. Furthermore, if $\phi
\in  X_r^{\log}$ (resp. $X_r$), then $\frac{\partial^2}{\partial
z^2}\phi(z) = z^{-2}  \psi(z)$ with $\psi \in X_r^{\log}$ (resp.
$X_r$).
\item
If $\phi(z) = a z^{-r} + \tilde \phi(z)$, with $\tilde \phi =
\gen_{2r,j}$, $0\le j \le [1/(m-1)]$, resp. $\gen_{2r}$, then $\gg(a
z^{-r} + \tilde \phi(z)) = A a^{\ell} z^{-\ell r} + \varphi(z)$, with
$\varphi = \gen_{(\ell+1)r,j}$, resp. $\gen_{(\ell+1)r}$.
\end{enumerate}
Moreover, in the case $n= 2m-1$, $X_r^{\log}$ is also invariant by
translation, that is, if $\phi(z) \in X_r^{\log}$, then $\phi(z-c)
\in X_r^{\log}$, for any $c\in \C$. In the case $n = 2m$, if $\phi
\in X_r$, then $\phi(z-c) \in X_{r/2}$.
\end{lemma}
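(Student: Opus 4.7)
The plan is to verify the three claims separately---differential invariance, behavior under composition with an analytic $g$, and translation invariance---each by a direct computation on basis monomials. All the real work is bookkeeping the exponents of $\log z$ in the logarithmic case via the elementary inequality $[a]+[b]\le[a+b]$.

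First I would handle item~1. For $\phi(z)=\sum_{k\ge1}c_{k-1}z^{-kr}\in X_r$, termwise differentiation gives $\partial_z^2\phi(z)=z^{-2}\sum_{k\ge1}kr(kr+1)c_{k-1}z^{-kr}$, and the bracketed series lies in $X_r$. In the logarithmic case, Leibniz' rule yields
\[
\partial_z^2(z^{-kr}\log^j z)=z^{-kr-2}\bigl[kr(kr+1)\log^j z-j(2kr+1)\log^{j-1}z+j(j-1)\log^{j-2}z\bigr],
\]
so the factor $z^{-2}$ is always present and the log-power cannot increase. The admissibility bound $j\le[(k-1)/(m-1)]$ for the input becomes the requirement $j\le[(k+2(m-1)-1)/(m-1)]=[(k-1)/(m-1)]+2$ for the output, which is satisfied with room to spare.

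Second, for item~2, I would first record that $X_r$ and $X_r^{\log}$ are closed under multiplication. This is immediate for $X_r$, and for $X_r^{\log}$ it reduces to the floor inequality above: the product of two admissible monomials $z^{-k_i r}\log^{j_i}z$ satisfies $j_1+j_2\le[(k_1-1)/(m-1)]+[(k_2-1)/(m-1)]\le[(k_1+k_2-2)/(m-1)]\le[(k_1+k_2-1)/(m-1)]$. Writing $g(y)=Ay^\ell+\sum_{k\ge\ell+1}g_k y^k$ and $\phi(z)=az^{-r}+\tilde\phi(z)$ with $\tilde\phi=\gen_{2r}$ (resp.\ $\gen_{2r,j}$), a binomial expansion gives $\phi^\ell=a^\ell z^{-\ell r}+\gen_{(\ell+1)r}$ (resp.\ $\gen_{(\ell+1)r,j}$), since every non-leading summand carries at least one factor $\tilde\phi$ contributing $z^{-2r}$. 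For $k\ge\ell+1$ each $\phi^k$ already lies in $\gen_{kr}\subseteq\gen_{(\ell+1)r}$ (resp.\ in the logarithmic analogue), and the analyticity of $g$ makes the formal sum converge coefficient by coefficient. Collecting, $g(\phi)=Aa^\ell z^{-\ell r}+\varphi$ with $\varphi\in\gen_{(\ell+1)r}$ (resp.\ $\gen_{(\ell+1)r,j}$).

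Third, for translation invariance in the odd case $n=2m-1$, I would write $(z-c)^{-kr}=z^{-kr}(1-c/z)^{-kr}$ and $\log(z-c)=\log z+\log(1-c/z)$. Since $r=1/(m-1)$, the binomial series $(1-c/z)^{-kr}=\sum_{\ell\ge0}\binom{-kr}{\ell}(-c/z)^\ell$ has exponents $-(k+\ell(m-1))r$, all integer multiples of $-r$; and raising $\log z+\text{(series in }z^{-1})$ to the $j$-th power yields monomials with log-power at most $j$. The admissibility bound transforms as $[(k+\ell(m-1)-1)/(m-1)]=\ell+[(k-1)/(m-1)]\ge j$, so the constraint is preserved. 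In the even case $n=2m$, the denominator of $r=2/(2m-1)$ is odd, so the same expansion only yields exponents in $(r/2)\mathbb{Z}_{<0}$ rather than $r\mathbb{Z}_{<0}$, which accounts for the loss from $X_r$ to $X_{r/2}$. The only mild subtlety in the whole argument is the floor-function bookkeeping in the logarithmic setting; beyond that, the lemma is elementary formal algebra, so I do not anticipate a substantive obstacle.
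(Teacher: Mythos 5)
Your proof is correct; the paper itself omits the argument entirely ("whose proof is straightforward"), and your direct termwise verification --- the explicit formula for $\partial_z^2(z^{-kr}\log^j z)$, closure under multiplication via $[a]+[b]\le[a+b]$, and the binomial/logarithm expansions for the translation --- is exactly the computation the authors intend (your derivative formula even matches their later equation~\eqref{eq:Delta2onlogs}). No gaps.
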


We recall the function
$
\gg(y) = y^n-\G(y)
$.
We remark that, since the operator $\Delta^2$ can be written
formally as
\begin{equation}
\label{eq:formalDelta2}
 \Delta^2 \phi (z) = 4 \sinh^2 \left( \frac{1}{2}
\frac{\partial}{\partial z} \right) \phi (z) = \left(
\frac{\partial^2}{\partial z^2} + \frac{1}{12}
\frac{\partial^4}{\partial z^4} + \cdots \right) \phi(z),
\end{equation}
(1)~in Lemma~\ref{lem:XrXrlogdifferencialalgebras} implies that the
inner equation~\eqref{innereq}
is well defined in $X_r$ and
$X_r^{\log}$. We introduce
\begin{equation}
\label{def:epsilonhatepsilon} \epsilon(\phi) = \Delta^2 (\phi) -\gg(\phi).
\end{equation}
It is clear that $\epsilon(\phi)(z) = z^{-2}\hat \epsilon(z)$  with
$\hat \epsilon \in X_r^{\log}$ (resp. $X_r$).

Next lemma follows directly from the definition of $X_r^{\log}$.
\begin{lemma}
\label{lem:errorterm} Let $n=2m-1$, $r=1/(m-1)$ and $\phi \in X_r^{\log}$ and $\epsilon(\phi)$
as in~\eqref{def:epsilonhatepsilon}. If $z^2 \epsilon(\phi) (z)$ has
no terms of order $N$ or smaller in $z^{-r}$ (that is, no terms of
the form $z^{-kr}\log^j z$, with $1\le k \le N$), then
$\epsilon(\phi) = \gen_{(N+1)r+2,L}$, where $L=[N/(m-1)]$.
\end{lemma}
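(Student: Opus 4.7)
The proof is essentially a bookkeeping argument resting on the observation made immediately before the statement that $\epsilon(\phi)(z) = z^{-2}\hat\epsilon(z)$ with $\hat\epsilon\in X_r^{\log}$. My plan is to unpack this membership, apply the hypothesis to kill the first $N$ blocks in powers of $z^{-r}$, read off the maximal logarithmic degree of the first surviving block from the very definition of $X_r^{\log}$, and then reinstate the overall factor $z^{-2}$.

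In detail, by~\eqref{def:Xrlog} I would write
\[
\hat\epsilon(z) = \sum_{k\ge 1} z^{-kr} \sum_{0 \le j \le [(k-1)/(m-1)]} a_{k-1,j}\log^j z,
\]
so the monomial $z^{-kr}\log^j z$ can appear in $\hat\epsilon$ only when $0\le j \le [(k-1)/(m-1)]$. The hypothesis that $z^2\epsilon(\phi)=\hat\epsilon$ contains no monomial of the form $z^{-kr}\log^j z$ with $1\le k\le N$ is then exactly the vanishing $a_{k-1,j}=0$ for every $1\le k\le N$ and every admissible $j$.

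Consequently, the first possibly nonzero block of $\hat\epsilon$ occurs at $k=N+1$, where the admissible logarithmic powers satisfy $0\le j \le [(N+1-1)/(m-1)] = [N/(m-1)] = L$, while all higher blocks $k\ge N+2$ produce monomials of strictly lower order in $z^{-r}$. Multiplying through by $z^{-2}$, every monomial of $\epsilon(\phi)=z^{-2}\hat\epsilon$ has the form $z^{-kr-2}\log^j z$ with either $k\ge N+2$, or $k=N+1$ and $j\le L$, which is precisely the assertion $\epsilon(\phi) = \mathcal{O}_{(N+1)r+2,\,L}$ in the notation introduced just before the lemma. No genuine obstacle arises; the only point that deserves attention is matching the bound on $j$ in the leading surviving block to the quantity $L$ in the statement, which is the arithmetic identity displayed above.
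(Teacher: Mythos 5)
Your argument is correct and is precisely the direct verification the paper has in mind: the paper offers no proof at all, stating only that the lemma ``follows directly from the definition of $X_r^{\log}$'', and your unpacking of the membership $z^2\epsilon(\phi)=\hat\epsilon\in X_r^{\log}$, the vanishing of the blocks $1\le k\le N$, and the bound $j\le[(N+1-1)/(m-1)]=L$ on the first surviving block is exactly that direct check. The only point worth noting is the one you already handle implicitly, namely that the notation $\gen_{(N+1)r+2,L}$ constrains the logarithmic degree only of the leading block, so the larger admissible powers of $\log z$ in the blocks $k\ge N+2$ cause no trouble.
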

\begin{definition}\label{def:truncated series}
Let $n\ge 2$, $N\in \N$, $r=2/(n-1)$ and $\phi \in X_{r}$ or
$X_{r}^{\log}$. We will call truncated series of order $N$ of $\phi$
to $\tilde \phi_N$ having the form:
\begin{enumerate}
\item If $n$ is even,
$$\tilde \phi_{N}(z) = \sum_{k=1}^N \frac{c_{k-1}}{z^{kr}}$$
\item If $n=2m-1$ is odd,
$$
\tilde \phi_{N}(z) = \sum_{k=1}^{m-1}\frac{c_{k-1}}{z^{kr}}+
\sum_{k=m}^{N} \frac{1}{z^{kr}}\sum_{0\le j \le
\left[\frac{k-1}{m-1}\right]} c_{k-1,j} \log^j z.
$$
\end{enumerate}
\end{definition}

Along the proof of Proposition
\ref{prop:formalsolutionsoftheinnerequation}, we will need to
compute several times the formal series $\gg(\phi+\psi)-\gg(\phi)$,
with different $\phi$ and $\psi$. The following lemma, which follows
from the properties in Lemma~\ref{lem:XrXrlogdifferencialalgebras},
summarizes the result.
\begin{lemma}\label{lem:auxiliarylemma}
Let $n\ge 2$, $r=2/(n-1)$, $N\geq 2$, $N\in \mathbb{N}$ and $\phi\in
X_r$ or $X_r^{\log}$. We define $\psi_N = \tilde \phi_N -
\tilde\phi_{N-1}$ where $\tilde \phi_{N}$ and $\tilde \phi_{N-1}$
are the truncated series of order $N$ and $N-1$ respectively. We
have that
\begin{enumerate}
\item If $n$ is even,
$$
\gg(\tilde \phi_{N}(z) ) - \gg(\tilde
\phi_{N-1}(z))=-n\frac{c_0^{n-1}}{z^2} \psi_N(z) + \gen_{(N+1)r+2}.
$$
\item If $n=2m-1$ is odd, writing $L=[N/(m-1)]$,
$$
\gg(\tilde \phi_{N}(z) ) - \gg(\tilde
\phi_{N-1}(z))=-n\frac{c_0^{n-1}}{z^2} \psi_N(z) + \gen_{(N+1)r+2,L}.
$$

\end{enumerate}
\end{lemma}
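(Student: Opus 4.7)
The plan is a formal Taylor expansion of $\gg$ around $\tilde\phi_{N-1}$,
\[
\gg(\tilde\phi_{N}) - \gg(\tilde\phi_{N-1}) = \gg(\tilde\phi_{N-1}+\psi_N)-\gg(\tilde\phi_{N-1}) = \gg'(\tilde\phi_{N-1})\,\psi_N + \sum_{k\ge 2}\frac{\gg^{(k)}(\tilde\phi_{N-1})}{k!}\,\psi_N^{k},
\]
combined with the order arithmetic furnished by Lemma~\ref{lem:XrXrlogdifferencialalgebras}(2). Since $\psi_N$ has order $z^{-Nr}$ (with a $\log^j z$ factor, $j\le [(N-1)/(m-1)]$, in the odd case) and $\gg$ is analytic at $0$, the successive powers $\psi_N^k$ have strictly increasing orders, so the Taylor series is a well-defined formal element of $X_r$ (resp.\ $X_r^{\log}$).

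The $k=1$ term supplies the main contribution. Writing $\tilde\phi_{N-1}=c_0 z^{-r}+\hat\phi$ with $\hat\phi=\gen_{2r}$ (resp.\ $\gen_{2r,j}$), and using $\gg'(y)=-ny^{n-1}+\G'(y)$ with $\G'(y)=\gen(y^{n})$, a direct application of Lemma~\ref{lem:XrXrlogdifferencialalgebras}(2) with $\ell=n-1$ gives
\[
\gg'(\tilde\phi_{N-1}) = -n\,c_0^{\,n-1}\,z^{-r(n-1)} + \gen_{nr} = -\frac{n\,c_0^{\,n-1}}{z^{2}} + \gen_{r+2},
\]
where we have used the identities $r(n-1)=2$ and $nr=2+r$. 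Multiplying by $\psi_N\in \gen_{Nr}$ yields the announced leading term $-n c_0^{n-1} z^{-2}\psi_N$ plus a remainder of order $\gen_{(N+1)r+2}$, with the log exponent of $\psi_N$ inherited appropriately.

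For $k\ge 2$, the same lemma applied to $\gg^{(k)}(y)=(-1)n(n-1)\cdots(n-k+1)y^{n-k}+\gen(y^{n-k+1})$ gives $\gg^{(k)}(\tilde\phi_{N-1})\in \gen_{r(n-k)}=\gen_{2-(k-1)r}$, so $\gg^{(k)}(\tilde\phi_{N-1})\,\psi_N^{k}$ has order
\[
z^{-(2-(k-1)r)-kNr}=z^{-(2+r(k(N-1)+1))}.
\]
The elementary inequality $k(N-1)+1\ge N+1$, i.e.\ $(k-1)(N-1)\ge 1$, holds for every $k\ge 2$ provided $N\ge 2$; this is precisely where the hypothesis $N\ge 2$ enters. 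Hence every such contribution is absorbed in the remainder $\gen_{(N+1)r+2}$. (For $k>n$ the factor $\gg^{(k)}(\tilde\phi_{N-1})$ involves only $\G^{(k)}(\tilde\phi_{N-1})$, which is even more regular, so the estimate is even better.)

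The only delicate step is the log bookkeeping in the odd case $n=2m-1$, where one must check that the maximal log-exponent appearing at the scale $z^{-((N+1)r+2)}$ is exactly $L=[N/(m-1)]$. This too is a mechanical consequence of Lemma~\ref{lem:XrXrlogdifferencialalgebras}(2): the $k=1$ contribution inherits at most $[(N-1)/(m-1)]+1\le[N/(m-1)]$ powers of $\log z$ in the remainder, the extra power arising when the correction $\gen_{r+2}$ of $\gg'(\tilde\phi_{N-1})$ hits the leading log-term of $\psi_N$, while the higher-order terms $k\ge 2$ sit strictly beyond the threshold and therefore only affect the $\gen$-remainder. This counting, routine once the algebraic structure of $X_r^{\log}$ recorded in Lemma~\ref{lem:XrXrlogdifferencialalgebras} is in hand, is the only genuine technical step; the rest is direct order arithmetic.
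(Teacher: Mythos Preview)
Your approach is exactly the one the paper has in mind (the paper does not give a proof, merely saying the lemma ``follows from the properties in Lemma~\ref{lem:XrXrlogdifferencialalgebras}''), and your treatment of the even case and of the order arithmetic for the $k\ge 2$ terms is correct. In particular, the use of $(k-1)(N-1)\ge 1$ to absorb the higher Taylor terms is precisely where the hypothesis $N\ge 2$ is needed.

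There is, however, a genuine slip in the log bookkeeping for the odd case. You assert that the $k=1$ remainder carries at most $[(N-1)/(m-1)]+1$ powers of $\log z$ and then invoke
\[
\Big[\tfrac{N-1}{m-1}\Big]+1 \le \Big[\tfrac{N}{m-1}\Big].
\]
This inequality is false unless $(m-1)\mid N$; for instance $m=3$, $N=3$ gives $2\le 1$. The repair is that the ``extra power'' you posit does not in fact appear in general. Applying Lemma~\ref{lem:XrXrlogdifferencialalgebras}(2) to $\gg'$ with $\hat\phi=\tilde\phi_{N-1}-c_0 z^{-r}\in\gen_{2r,j_0}$, where $j_0=[1/(m-1)]$, yields
\[
\gg'(\tilde\phi_{N-1})+\frac{n c_0^{n-1}}{z^2}\in \gen_{r+2,\,j_0},
\]
so the leading correction at $z^{-(r+2)}$ carries log power at most $j_0$, not $1$. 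Multiplying by $\psi_N$, the $k=1$ remainder at the threshold order $z^{-(N+1)r-2}$ has log power at most
\[
j_0+\Big[\tfrac{N-1}{m-1}\Big]=\Big[\tfrac{1}{m-1}\Big]+\Big[\tfrac{N-1}{m-1}\Big]\le \Big[\tfrac{N}{m-1}\Big]=L,
\]
using $[a]+[b]\le[a+b]$. For $m=2$ this recovers your ``$+1$'' with equality; for $m\ge 3$ the extra power simply is not there. The $k\ge 2$ terms contribute at the threshold only when $k=2$, $N=2$, with log power $\le 2[1/(m-1)]\le L$, so they cause no trouble. With this correction your argument is complete.
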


The following proposition implies the existence of formal solution
of the
inner equation~\eqref{innereq}
and henceforth Proposition
\ref{prop:formalsolutionsoftheinnerequation}.
\begin{proposition}
\label{prop:formalsolutionsoftheinnerequationconcreta} Let $n\ge 2$,
$r= 2/(n-1)$ and $c_0$ be such that $c_0^{n-1}=-r(r+1)$. The inner
equation~\eqref{innereq} admits a formal solution $\phi$ with
$z^{r}(\phi(z) - c_0 z^{-r}) \in X_r$, if $n$ is even, and
$z^r(\phi(z) -  c_0 z^{-r} )\in X_{r}^{\log}$ if $n$ is odd.

Let $N\geq 2$ and $\tilde \phi_N$ be the truncated series defined as
in Definition~\ref{def:truncated series}. Writing the truncation
error of order $N$ as
$$
\epsilon_N:=\epsilon(\tilde \phi_N)=\Delta^2 (\tilde \phi_{N}) -\gg(\tilde \phi_{N}),
$$
where $\epsilon$ was defined by \eqref{def:epsilonhatepsilon}, we
have that
\begin{enumerate}
\item If $n\ge 2$ is even $\epsilon_N = \gen_{(N+1)r+2}$.
\item If $n = 2m -1\ge 2$ is odd and $L = [N/(m-1)]$, then
\begin{enumerate}
\item[(i)] if $1\le N \le m-1$, $\epsilon_{N} = \gen_{(N+1)r+2}$,
\item[(ii)] if $m \le N$,  $\epsilon_{N} = \gen_{(N+1)r+2,L}$.
\end{enumerate}
\end{enumerate}
\end{proposition}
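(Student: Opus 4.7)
The plan is to prove Proposition~\ref{prop:formalsolutionsoftheinnerequationconcreta} by induction on $N$, constructing the truncations $\tilde\phi_N$ one order at a time while tracking both the polynomial order and the logarithmic depth of the error.

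For the base case $N=1$, I would take $\tilde\phi_1(z)=c_0 z^{-r}$. Using the formal expansion \eqref{eq:formalDelta2} of $\Delta^2$, one finds $\Delta^2(\tilde\phi_1)(z) = c_0\, r(r+1)\, z^{-r-2} + \gen(z^{-r-4})$, while $g(\tilde\phi_1) = -c_0^n z^{-nr} + \gen(z^{-(n+1)r})$. Since $nr=r+2$, the condition $c_0^{n-1}=-r(r+1)$ cancels the $z^{-r-2}$ terms and, using $r\le 2$ (which follows from $n\ge 2$), all remaining contributions are $\gen_{2r+2}$.

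For the inductive step, set $\psi_N = \tilde\phi_N-\tilde\phi_{N-1}$ and write
\[
\epsilon_N = \epsilon_{N-1} + \Delta^2\psi_N - \bigl[g(\tilde\phi_{N-1}+\psi_N)-g(\tilde\phi_{N-1})\bigr].
\]
Lemma~\ref{lem:auxiliarylemma} gives $g(\tilde\phi_{N-1}+\psi_N)-g(\tilde\phi_{N-1}) = -n c_0^{n-1} z^{-2}\psi_N + \gen_{(N+1)r+2,L}$, and \eqref{eq:formalDelta2} with $r\le 2$ yields $\Delta^2\psi_N = \partial_z^2 \psi_N + \gen_{(N+1)r+2,L}$. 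Therefore, using $c_0^{n-1}=-r(r+1)$,
\[
\epsilon_N \equiv \epsilon_{N-1} + L_0(\psi_N)\pmod{\gen_{(N+1)r+2,L}}, \qquad L_0 := \partial_z^2 - \frac{nr(r+1)}{z^2}.
\]
A direct calculation gives
\[
L_0\!\left(\frac{\log^j z}{z^{Nr}}\right)=\frac{[Nr(Nr+1)-nr(r+1)]\log^j z \;-\; j(2Nr+1)\log^{j-1} z \;+\; j(j-1)\log^{j-2} z}{z^{Nr+2}}.
\]
The diagonal coefficient $Nr(Nr+1)-nr(r+1)$ vanishes exactly when $N=(n+1)/2$. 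For $n$ even this is never an integer, so $L_0$ is invertible on the one-parameter family $\psi_N = c_{N-1}z^{-Nr}$ and $c_{N-1}$ is uniquely determined by matching the leading coefficient of $-\epsilon_{N-1}$. No logarithms appear, which gives part~(1).

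The main obstacle, and the part that requires the most bookkeeping, is the odd case $n=2m-1$, where the resonance hits at $N=m$. Here I would argue with a refined induction that simultaneously tracks the admissible log depths $J(N)=[(N-1)/(m-1)]$ of $\psi_N$ and $L(N)=[N/(m-1)]$ of $\epsilon_N$. For $1\le N\le m-1$ the diagonal is nonzero, $\psi_N=c_{N-1}z^{-Nr}$ suffices, and Lemma~\ref{lem:auxiliarylemma}~(2) delivers part~(2)(i). For $N\ge m$ one solves, at each step, a triangular linear system for the coefficients $(c_{N-1,j})_{0\le j\le J(N)}$: when $N\neq m$ one inverts $L_0$ term by term, descending from the top log power $j=L(N-1)$ using the nonvanishing diagonal; at $N=m$ the diagonal vanishes, but the off-diagonal $-j(2mr+1)$ is nonzero, so the (log-free) leading coefficient of $\epsilon_{m-1}$ is killed by choosing $c_{m-1,1}$, leaving $c_{m-1,0}$ free. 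Setting $c_{m-1,0}=0$ enforces uniqueness. Verifying that the error's log depth propagates as $L(N)=[N/(m-1)]$ follows by combining the log-depth estimate of Lemma~\ref{lem:auxiliarylemma}~(2) with the observation that $L_0$ does not raise the log depth, so that $\epsilon_N$'s depth is determined entirely by the Lemma's remainder. The bulk of the work in a full proof lies precisely in this careful tracking of the logarithmic depths through the triangular system.
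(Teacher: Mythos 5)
Your proposal is correct and follows essentially the same route as the paper: induction on $N$, with the linearized coefficient $\lambda_N = Nr(Nr+1)-nr(r+1)$ (the diagonal of your $L_0$) vanishing only at the resonance $N=(n+1)/2=m$ in the odd case, where the off-diagonal term $-(2mr+1)c_{m-1,1}$ coming from $\log z$ absorbs the obstruction and leaves $c_{m-1,0}$ free. Your explicit description of the triangular system in the log powers for $N>m$ is a welcome elaboration of the step the paper dispatches with ``now proceeding by induction the result is proven.''
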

\begin{proof}
We deal first with 1). We prove the claim by induction over $N$. We
start by assuming $N=1$. Let $\tilde \phi_1 (z)= c_0 z^{-r}$. By (2)
in Lemma~\ref{lem:XrXrlogdifferencialalgebras} and
using~\eqref{eq:formalDelta2}, we have that
$$
\epsilon_1(z) = \Delta^2 ( \tilde{\phi}_1)(z) -
\gg(\tilde{\phi}_1(z)) = r(r+1) \frac{c_0}{z^{r+2}} +
\frac{c_0^n}{z^{nr}} + \gen_{(n+1)r}.
$$
The claim for $N=1$ follows from the facts that $r = 2/(n-1)$, which
implies $\gen_{(n+1)r} = \gen_{2r+2}$,  and $c_0^{n-1} = -r (r+1)$.

Now we assume the claim for $N-1$, that is, there exist coefficients
$c_k$, $1 \le k \le N-2$ such that $\tilde \phi_{N-1}$ satisfies
\[
\epsilon_{N-1}(z) = \epsilon(\tilde \phi_{N-1})(z) =
\frac{A_{N-1}}{z^{Nr+2}} + \gen_{(N+1)r+2}.
\]
We look for $\tilde
\phi_N (z) = \tilde \phi_{N-1}(z) + c_{N-1} z^{-Nr}$ satisfying the
claim. We have that
$$
\epsilon_N (z)  = \epsilon_{N-1}(z) +
\Delta^2\left(\frac{c_{N-1}}{z^{Nr}}\right) - \gg \left(\tilde
\phi_{N-1}(z) + \frac{c_{N-1}}{z^{Nr}}\right) + \gg(\tilde
\phi_{N-1}(z))
$$
By 1) of Lemma~\ref{lem:auxiliarylemma},
\begin{equation}
\gg \left(\tilde \phi_{N-1}(z) + \frac{c_{N-1}}{z^{Nr}}\right) -
\gg(\tilde \phi_{N-1}(z)) = -n \frac{c_0^{n-1}}{z^2}
\frac{c_{N-1}}{z^{Nr}} + \gen_{(N+1)r+2}.
\end{equation}
Hence, using again~\eqref{eq:formalDelta2},
$$
\epsilon_N (z)  = \frac{A_{N-1}}{z^{Nr+2}} + \lambda_N \frac{c_{N-1}}{z^{Nr+2}}+
\gen_{(N+1)r +2},
$$
where the coefficient $\lambda_N$ is
\begin{equation}
\label{eq:coefficientforrecurrence} \lambda_N = Nr(Nr+1) + n
c_0^{n-1} = \frac{4}{(n-1)^2} \left(N-\frac{n+1}{2}\right)(N+n).
\end{equation}
Clearly, the claim follows if $\lambda_N$ is different from $0$,
which is true since $n$ is even and positive.

Now we assume $n = 2m-1$, $m \ge 2$. The induction process used in
the previous case can performed provided that $\lambda_N \neq 0$.
This is true for $N \neq m$. Hence, the claim holds for $1 \le N \le
m-1$. Let $\tilde \phi_{m-1}(z) = c_0/z^r+ \dots+
c_{m-2}/z^{(m-1)r}$ be the corresponding function. It satisfies,
\begin{equation}
\label{eq:remainderphi0N-1} \epsilon_{m-1}(z) = \epsilon(\tilde
\phi_{m-1})(z) = \frac{A_{m-1}}{z^{mr+2}} + \gen_{(m+1)r+2}.
\end{equation}

Now we consider the case $N = m$. Since $\lambda_m = 0$, this case
cannot be dealt as before. We need to include logarithms in the
formal series.

Notice that, from~\eqref{eq:formalDelta2},
\begin{equation}
\label{eq:Delta2onlogs} \Delta^2\left(\frac{\log^{\ell}
z}{z^{kr}}\right)  = kr(kr+1) \frac{\log^{\ell} z}{z^{kr+2}}
-{\ell}(2kr+1) \frac{\log^{\ell-1} z}{z^{kr+2}} + {\ell}({\ell}-1)
\frac{\log^{\ell-2} z}{z^{kr+2}}  + \gen_{kr+4,\ell} .
\end{equation}
We look for $\tilde \phi_m  =  \tilde \phi_{m-1} + \psi_m$
satisfying the claim, with $\psi_m(z) = c_{m-1,1}z^{-mr}\log z+
c_{m-1,0}z^{-mr}$. Hence we have that
$$
\epsilon_{m}   =   \epsilon_{m-1} + \Delta^2 (\psi_m) - \gg(\tilde
\phi_{m-1} + \psi_m)+ \gg(\tilde \phi_{m-1}).
$$
From~\eqref{eq:Delta2onlogs}, we have that
\begin{equation}
\label{eq:Delta2psim} \Delta^2 (\psi_m)(z) =
\frac{mr(mr+1)}{z^2}\psi_m(z) -(2mr+1) c_{m-1,1}\frac{1}{z^{mr+2}} +
\gen_{mr+4,1},
\end{equation}
while, from 2) in Lemma~\ref{lem:auxiliarylemma},
\begin{equation}
\label{eq:diffphimpsim}
\gg(\tilde \phi_{m-1}(z) + \psi_m(z))- \gg(\tilde \phi_{m-1}(z))
= -n\frac{c_0^{n-1}}{z^{2}}\psi_m(z)+\gen_{(m+1)r+2,L},
\end{equation}
with $L=[N/(m-1)]=[m/(m-1)]$.

Hence, substituting~\eqref{eq:Delta2psim}
and~\eqref{eq:diffphimpsim} into the expression for $\epsilon_m$
above, we obtain
$$
\epsilon_{m} (z) = \epsilon_{m-1} (z) +\frac{\lambda_m}{z^2}
\psi_m(z) -(2mr+1) c_{m-1,1}\frac{1}{z^{mr+2}} + \gen_{(m+1)r+2,L},
$$
where the coefficient $\lambda_N$ was introduced
in~\eqref{eq:coefficientforrecurrence} and, in fact, satisfies
$\lambda_m = 0$. Since $\epsilon_{m-1}(z) = A_{m-1} z^{mr+2}+
\gen_{(m+1)r+2}$ (see \eqref{eq:remainderphi0N-1}), taking
$c_{m-1,1} = A_{m-1}/(2mr+1)$, we have that $ \epsilon_m =
\gen_{(m+1)r+2,L}. $ Notice that the coefficient $c_{m-1,0}$ is
free.  Hence, the claim is proven for $1 \le N \le m$.

Now proceeding by induction the result is proven.
\end{proof}

\section{A solution of the inner equation}\label{sec:solinner}

The goal of this section is to prove the existence of a solution of the
inner equation
satisfying
the properties stated in Theorem \ref{existencetheorem}.

For any $\g,\r>0$, we recall the complex domains
\begin{equation*}
D_{\g,\r}^{\s}=\{z\in \C : |\im z | > -\g \re z + \rho\},\;\;\;\; D_{\g,\r}^{\u} = -D_{\g,\r}^{\s}.
\end{equation*}
defined in \eqref{defDus} (see Figure \ref{dominiestable}).
We also introduce the norms
\begin{equation*}
\Vert \fB \Vert_{\nu,\g,\r}^{\u,\s} = \sup_{z \in D_{\g,\r}^{\u,\s}} |z^{\nu} \fB(z)|
\end{equation*}
and the Banach spaces
\begin{equation*}
\Ein^{\u,\s}_{\nu,\g,\r} = \{ \fB : D_{\g,\r}^{\u,\s} \to \C \;\;\text{such that} \;\; \Vert \fB \Vert_{\nu,\g,\r}^{\u,\s} <+\infty\}.
\end{equation*}

We also define the functional space
\begin{equation*}
\Ein^{\u,\s}_{\nu,k,\g,\r} = \{ \fB : D_{\g,\r}^{\u,\s} \to \C \;\;\text{such that} \;\; \overline{\fB}(z):=(\log z)^{-k} \fB(z)\in \Ein^{\u,\s}_{\nu,\g,\r} \}
\end{equation*}
and, if there is no danger of confusion, we will simply denote them
\begin{equation*}
\Ein_{\nu} = \Ein^{\u,\s}_{\nu,\g,\r},\;\;\;\; \Ein_{\nu,k}^{\log} = \Ein^{\u,\s,}_{\nu,k,\g,\r},\;\;\;\; \Vert \cdot \Vert_{\nu} = \Vert \cdot \Vert_{\nu,\g,\r}^{\u,\s},\;\;\;\;
D_{\g,\r} = D_{\g,\r}^{\u,\s}.
\end{equation*}

From now on we will denote by $C$ a generic positive constant independent of $\g,\r,\nu$. We state (without proof) the following
lemma which will be used without mention along this section.
\begin{lemma}\label{Banachspacelemma}
Let $0<\nu_1,\nu_2$. For any $\fB_1\in \Ein_{\nu_1}$ and $\fB_2\in
\Ein_{\nu_2}$, then
\begin{equation*}
\fB_1\cdot \fB_2 \in \Ein_{\nu_1+\nu_2} \;\;\;\;\;
\text{and}\;\;\;\;\; \Vert \fB_1 \cdot \fB_2 \Vert_{\nu_1 + \nu_2}
\leq \Vert \fB_2 \Vert_{\nu_2} \cdot \Vert \fB_1 \Vert_{\nu_1}.
\end{equation*}
Also there exists $C>0$ such that if $0<\nu_1 <\nu_2$  and $\fB\in
\Ein_{\nu_2}$, then
\begin{equation*}
\fB\in \Ein_{\nu_1} \;\;\;\;\; \text{and}\;\;\;\;\; \Vert \fB
\Vert_{\nu_1} \leq C\r^{-(\nu_2-\nu_1)} \Vert \fB \Vert_{\nu_2}.
\end{equation*}
\end{lemma}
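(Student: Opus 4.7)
The statement is a standard pair of estimates for weighted sup norms on the domains $D_{\g,\r}^{\u,\s}$; the plan is to prove the product bound by a one-line pointwise estimate, and then to reduce the interpolation/inclusion bound to a geometric observation about the smallest modulus attained on $D_{\g,\r}$.

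For the first assertion, I would just note that for every $z\in D_{\g,\r}$,
\[
|z^{\nu_1+\nu_2} \fB_1(z)\fB_2(z)| = |z^{\nu_1}\fB_1(z)|\cdot |z^{\nu_2}\fB_2(z)| \le \Vert \fB_1\Vert_{\nu_1}\,\Vert \fB_2\Vert_{\nu_2},
\]
and taking the supremum in $z$ gives $\fB_1\fB_2\in \Ein_{\nu_1+\nu_2}$ together with the stated bound. No analyticity or domain geometry is used beyond the fact that $D_{\g,\r}$ is the common domain of definition.

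For the second assertion the key step is a lower bound of the form $|z|\ge C(\g)\r$ on $D_{\g,\r}$. Recall that $D_{\g,\r}^{\s}=\{z : |\im z|>-\g\re z+\r\}$ and $D_{\g,\r}^{\u}=-D_{\g,\r}^{\s}$, both being symmetric with respect to the origin. Every $z=x+\ic y\in D_{\g,\r}^{\s}$ satisfies either $y>-\g x+\r$ or $-y>-\g x+\r$; in both cases $|y|+\g x>\r$, which forces the Euclidean distance from the origin to the boundary line $y=\pm(-\g x+\r)$ to be $\r/\sqrt{1+\g^2}$. Hence $|z|\ge \r/\sqrt{1+\g^2}$ for every $z\in D_{\g,\r}^{\u,\s}$. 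From this we get, for any $\fB\in\Ein_{\nu_2}$ and any $z$ in the domain,
\[
|z^{\nu_1}\fB(z)| = |z|^{-(\nu_2-\nu_1)}\,|z^{\nu_2}\fB(z)| \le \bigl(1+\g^2\bigr)^{(\nu_2-\nu_1)/2}\,\r^{-(\nu_2-\nu_1)}\,\Vert \fB\Vert_{\nu_2},
\]
so that $\fB\in\Ein_{\nu_1}$ and $\Vert \fB\Vert_{\nu_1}\le C\r^{-(\nu_2-\nu_1)}\Vert \fB\Vert_{\nu_2}$ with $C=(1+\g^2)^{(\nu_2-\nu_1)/2}$; as usual the constant depends on $\g$ (and on $\nu_2-\nu_1$, which is an admissible dependence in view of how the lemma will be applied).

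There is essentially no obstacle here: the only ingredient that is not completely mechanical is the geometric computation of $\min_{z\in D_{\g,\r}}|z|$, and that reduces to the distance from the origin to the two bounding half-lines, which is elementary. The same argument also yields the corresponding statement for $\Ein_{\nu,k}^{\log}$ after factoring out the $(\log z)^{k}$ weight, so the lemma extends without modification to the logarithmic spaces used later in the paper.
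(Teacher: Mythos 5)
Your proof is correct; note that the paper itself states Lemma~\ref{Banachspacelemma} explicitly without proof, and your argument (pointwise factorization of the weight for the product bound, plus the elementary estimate $|z|\ge \r/\sqrt{1+\g^2}$ on $D_{\g,\r}^{\u,\s}$ for the inclusion) is exactly the intended routine verification. The only caveat is the one you already flag: the sharp constant in the second bound is $(1+\g^2)^{(\nu_2-\nu_1)/2}$, so it genuinely depends on $\g$ and on $\nu_2-\nu_1$, slightly at odds with the paper's blanket convention that $C$ is independent of $\g,\r,\nu$; this is harmless since $\g$ is fixed before $\r_0$ is chosen and $\nu$ ranges over a bounded set in all applications.
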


As in previous section, we will denote by $\gen_{\nu}$ and $\gen_{\nu,k}$ a generic function belonging to $\Ein_{\nu}$ and $\Ein_{\nu,k}^{\log}$ respectively.

Theorem~\ref{existencetheorem} is rephrased in terms of the Banach
spaces $\Ein_{\nu,\g,\r}^{\u,\s}$ in the following proposition:
\begin{proposition}\label{prop:existence}
Given $\g>0$, there exists $\r_0>0$ such that for any $\r\geq \r_0$,
the
inner equation~\eqref{innereq}
(polynomial case) and~\eqref{innereqt} (trigonometric case)
\begin{equation}\label{innereq2}
\Delta^2 (\phi) =  g(\phi)
\end{equation}
have exactly two solutions $\phi^{\u,\s}$ of the form
$$\phi^{\u, \s} = \fa + \f^{\u,\s}$$ where $\fa$ is the truncated
series of order $n$ defined in \eqref{def:phi0even},
\eqref{def:phi0oddp} and \eqref{def:phi0oddt}, depending on the case
we are dealing with, and $\f^{\u,\s} \in \Ein_{r+2,\g,\r}^{\u,\s}$.
\end{proposition}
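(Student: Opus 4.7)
The plan is to substitute $\phi = \fa + \f$ in the inner equation, recast the problem as a fixed point for $\f$ in a closed ball of $\Ein_{r+2,\g,\r}^{\u,\s}$, and solve it by a contraction argument for $\r$ large. The polynomial and trigonometric cases run in parallel once $\fa$ is chosen as in \eqref{def:phi0even}--\eqref{def:phi0oddt}; below I describe the polynomial case, which is equation~\eqref{innereq}.

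By Proposition~\ref{prop:formalsolutionsoftheinnerequationconcreta}, the truncation error $\epsilon_n := \Delta^2 \fa - \gg(\fa)$ belongs to $\Ein_{2r+4,\g,\r}^{\u,\s}$ (with logarithmic factors in the odd case, absorbed into the corresponding $\Ein^{\log}$ spaces). I would introduce the telescoping right inverses of $\Delta^2$,
$$\mathcal{B}^{\u}F(z)=\sum_{k\ge1}k\,F(z-k),\qquad \mathcal{B}^{\s}F(z)=\sum_{k\ge1}k\,F(z+k),$$
which are well-defined because $z\mp k \in D_{\g,\r}^{\u,\s}$ for all $k\ge 0$ whenever $z \in D_{\g,\r}^{\u,\s}$, and verify $\Delta^2 \mathcal{B}^{\u,\s} = \mathrm{Id}$ by direct telescoping. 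Comparing $\sum k |z\mp k|^{-\nu}$ with $\int_0^\infty t(t^2+y^2)^{-\nu/2}\,dt$ yields the key estimate $\|\mathcal{B}^{\u,\s} F\|_{\nu-2} \le C_\g (\nu-2)^{-1}\|F\|_\nu$ for $\nu > 2$, with $C_\g$ independent of $\r$. Together with the inclusion $\|\cdot\|_{\nu_1} \le C \r^{\nu_1-\nu_2}\|\cdot\|_{\nu_2}$ of Lemma~\ref{Banachspacelemma}, this delivers $\|\mathcal{B}^{\u,\s}\epsilon_n\|_{r+2} = O(\r^{-r})$, which provides a small source term.

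The substantive obstacle is the linearized map $\f \mapsto \mathcal{B}^{\u,\s}(\gg'(\fa)\f)$. Since $\gg'(\fa)(z) = -nr(r+1)/z^2 + O(z^{-r-2})$, and the indicial equation $a(a-1) - (r+1)(r+2) = 0$ associated with the leading $\Delta^2 - (r+1)(r+2)/z^2$ has roots $a = r+2$ and $a = -(r+1)$, this map is bounded on $\Ein_{r+2}^{\u,\s}$ but its norm is not in general below $1$, so a naive contraction based on $\mathcal{B}^{\u,\s}$ alone need not close. My proposed remedy is to invert the full linearization $\lop := \Delta^2 - \gg'(\fa)$ in place of $\Delta^2$: starting from the approximate homogeneous solutions $\bar\eta_1(z)\sim -rc_0 z^{-r-1}$ (the leading term of $\partial_z\fa$) and $\bar\eta_2(z)\sim z^{r+2}/(rc_0(2r+3))$, a preliminary perturbative fixed point built on $\mathcal{B}^{\u,\s}$ produces exact solutions $\spli_1,\spli_2$ of $\lop\spli=0$, normalized so that the discrete Wronskian $W(\spli_1,\spli_2)\equiv 1$. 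Discrete variation of parameters then yields a right inverse $\lop^{-1}:\Ein_{r+4}^{\u,\s}\to\Ein_{r+2}^{\u,\s}$.

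With $\lop^{-1}$ in hand, the equation becomes
$$\f = \lop^{-1}\bigl(\mathcal{R}(\f) - \epsilon_n\bigr),\qquad \mathcal{R}(\f) := \gg(\fa+\f) - \gg(\fa) - \gg'(\fa)\f = O(\f^2),$$
whose nonlinearity is genuinely quadratic. A Banach contraction argument on the ball $\{\f:\|\f\|_{r+2}\le M\r^{-r}\}$, for a suitable $M$ and $\r$ large, produces the solution $\f^{\u,\s}$ and its uniqueness in the ball. Uniqueness in the full space $\Ein_{r+2,\g,\r}^{\u,\s}$ follows from the observation that any element of $\ker\lop \cap \Ein_{r+2}^{\u,\s}$ must be a multiple of the decaying solution $\spli_1\sim z^{-r-1}$, which itself does not lie in $\Ein_{r+2}^{\u,\s}$; hence it vanishes, forcing two candidate solutions to coincide. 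The trigonometric case~\eqref{innereqt} is handled identically, after writing $\fa$ as the logarithmic leading part $\frac{1}{n-1}\log(-\tfrac{2}{(n-1)z^2})$ plus the polynomial corrections of \eqref{def:phi0oddt}, and noting that $\gg'(\fa)$ still has leading behavior $-(r+1)(r+2)/z^2$.
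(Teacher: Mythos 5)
Your overall architecture is the same as the paper's: write $\phi=\fa+\f$, control the truncation error $\epsilon_0=\Delta^2\fa-\gg(\fa)\in\Ein_{2r+4}$, and run a contraction for $\f$ in $\Ein_{r+2,\g,\r}^{\u,\s}$ using a right inverse of a second-order difference operator built by discrete variation of parameters from two homogeneous solutions behaving like $z^{-(r+1)}$ and $z^{r+2}/(rc_0(2r+3))$. You also correctly identify the central obstruction: since $D\gg(\fa)\sim-(r+1)(r+2)z^{-2}$ exactly offsets the two powers of $z$ gained by any right inverse of $\Delta^2$, the map $\f\mapsto\mathcal{B}(D\gg(\fa)\f)$ is $O(1)$ on $\Ein_{r+2}$ uniformly in $\r$, so a naive contraction does not close. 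Where you diverge from the paper is the remedy. The paper does \emph{not} invert the full linearization $\Delta^2-D\gg(\fa)$: it inverts $\lop=\Delta^2-H$ with the explicit comparison potential $H(z)=(1+z^{-1})^{\eH}-2+(1-z^{-1})^{\eH}$, chosen precisely so that $z^{\eH}$ is an \emph{exact} homogeneous solution (by construction, $\Delta^2(z^{\eH})=H(z)z^{\eH}$); the second solution comes for free by reduction of order, $\inlop$ is then explicit, and the genuinely higher-order discrepancy $\OL=D\gg(\fa)-H\in\Ein_{r+2}$ (or $\Ein_{r+2,1}^{\log}$ for $n=3$) is pushed into the contraction, where it contributes a Lipschitz constant $O(\r^{-r+\nu_r})$. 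This buys an explicit fundamental system with no preliminary construction; your route, if completed, buys a cleaner fixed point with purely quadratic nonlinearity.

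The genuine gap is in your construction of $\spli_1,\spli_2$. You propose to obtain them by ``a preliminary perturbative fixed point built on $\mathcal{B}^{\u,\s}$'' starting from the bare leading monomials $\bar\eta_1\sim-rc_0z^{-r-1}$ and $\bar\eta_2\sim z^{r+2}/(rc_0(2r+3))$. That preliminary fixed point faces exactly the obstruction you diagnosed for the main equation: the linear map $\delta\mapsto\mathcal{B}(D\gg(\fa)\delta)$ does not become small as $\r\to\infty$, because the $z^{-2}$ decay of the potential is entirely consumed by the gain of $\mathcal{B}$; its norm on $\Ein_\beta$ is of order $C_\g(r+1)(r+2)/((\beta+1)\beta)$ and is not below $1$ for the relevant exponents (for $\spli_1$ the correction would have to live at decay rate barely exceeding $r+1$, where $(\beta+1)\beta$ is close to $(r+1)(r+2)$, and $C_\g$ is not $1$). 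Worse, with only the leading monomial, $\lop\bar\eta_2=O(z^{r-2})$ with $r\le2$, which is not even in the domain $\Ein_\nu$, $\nu>2$, where your estimate for $\mathcal{B}$ applies. The repair is either to take $\bar\eta_{1,2}$ as sufficiently long truncations of the \emph{formal} homogeneous solutions (so the errors lie in $\Ein_{\beta+2}$ with $\beta$ large and the operator norm $\sim C_\g(r+1)(r+2)/\beta^2$ is genuinely small), which requires a separate formal analysis with its own resonances and logarithms, or to adopt the paper's device of folding the exact leading behavior into the operator via $H$, which removes the preliminary step altogether. As written, the step producing $\spli_1,\spli_2$ would fail.
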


The properties of $\phi_0$ we are interested in follow from
Proposition~\ref{prop:formalsolutionsoftheinnerequationconcreta}.
\begin{corollary}\label{cor:firstaproximation}
Let us consider the remainder of order $n$:
\begin{equation*}
\epsilon_0= \epsilon(\phi_0)=\Delta^2
(\phi_0) -g(\phi_0)
\end{equation*}
where $\phi_0$ is the truncated series of order $n$ defined
in~\eqref{def:phi0even}, \eqref{def:phi0oddp}
and~\eqref{def:phi0oddt}.

For any $\g>0$ there exists $\r_0$ big enough such that
\begin{enumerate}
\item If $n$ is even, $\phi_0= c_0 z^{-r} + \gen_{2r}$, in the polynomial case,
and $\phi_0 = \frac{r}{2} \log(-r z^{-2}) + \gen_r$, in the trigonometric one.
\item If $n=2m-1$ is odd, for the polynomial case $\phi_0= c_0 z^{-r} + \gen_{2r} + \gen_{mr,1}$.
Notice that, since $m\geq 2$, in particular we also have that
$\phi_0= c_0 z^{-r} +\gen_{2r,1}$. In the trigonometric case, we
have that $\phi_0 = \frac{r}{2} \log(-r z^{-2}) + \gen_{r} +
\gen_{(m-1)r,1}$ which also implies that $\phi_{0} = \frac{r}{2}
\log(-r z^{-2}) + \gen_{r,1}$.
\item For any value of $n$ we have that $\epsilon_0 \in \Ein_{nr+2}$.
\end{enumerate}
\end{corollary}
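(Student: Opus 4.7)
Parts (1) and (2) are immediate from the explicit definitions \eqref{def:phi0even}, \eqref{def:phi0oddp} and \eqref{def:phi0oddt}. Each statement amounts to reading off the leading term and checking that the remaining finite sum of monomials $c_{k-1}z^{-kr}$ (and, in the odd case, $c_{k-1,j}z^{-kr}\log^j z$) lies in the claimed Banach space once $\r_0$ is large enough. For example, in the even polynomial case, $\fa - c_0 z^{-r} = \sum_{k=2}^{n} c_{k-1}z^{-kr}$ clearly belongs to $\Ein_{2r,\g,\r}$. In the odd polynomial case, a log-term $c_{k-1,j}z^{-kr}\log^j z$ with $k\ge m$ and $j\ge 1$ satisfies
$$
\bigl| z^{mr}(\log z)^{-1}\cdot z^{-kr}\log^j z\bigr| = |z|^{(m-k)r}|\log z|^{j-1},
$$
which is bounded on $D_{\g,\r}$ once $\r$ is large enough, since the power decay (with $m\le k$) beats the polylogarithmic growth. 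The trigonometric cases are handled identically after subtracting $\tilde\phi_0$.

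For part (3), the plan is to combine Proposition \ref{prop:formalsolutionsoftheinnerequationconcreta} with an embedding between Banach spaces. Applying that proposition with $N=n$, the formal series of $\epsilon_0 = \Delta^2 \fa - g(\fa)$ starts at order $(n+1)r+2$ in the even case and at order $(n+1)r+2$ with at most $L=[n/(m-1)]$ logarithms in the odd case. Since $(n+1)r+2 > nr+2$ and $|z|^{-r}\log^L|z|$ is bounded on $D_{\g,\r}$ for $\r\ge\r_0$ large enough, both formal statements, once upgraded to rigorous analytic bounds on $D_{\g,\r}$, imply $\epsilon_0 \in \Ein_{nr+2,\g,\r}$. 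Schematically this is the embedding $\Ein^{\log}_{(n+1)r+2,L}\hookrightarrow \Ein_{nr+2}$, in the spirit of the second estimate in Lemma \ref{Banachspacelemma}.

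The upgrade itself reduces to computations on the finitely many building blocks of $\fa$: the monomials $z^{-kr}$ and $z^{-kr}\log^j z$, together with the piece $\tilde\phi_0(z) = (r/2)\log(-rz^{-2})$ in the trigonometric case. For each such block $u$, Taylor's theorem with integral remainder yields, for any $M\ge 1$,
$$
\Delta^2 u(z) = 2\sum_{j=1}^{M-1} \frac{u^{(2j)}(z)}{(2j)!} + R_M(z),
$$
with $R_M$ a weighted integral of $u^{(2M)}$ over $[z-1,z+1]$. Since $|u^{(2M)}(z)| \leq C|z|^{-kr-2M}$ for $u=z^{-kr}$, and similarly with polylogarithmic corrections for the log blocks using $\Delta^2\log z = \log(1-z^{-2}) = -\sum_{j\ge 1}z^{-2j}/j$, the remainder $R_M$ has the decay required to match the formal expansion up to any prescribed order. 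For the nonlinearity $g(\fa)$: in the polynomial case $g$ is analytic near $0$ and $|\fa|\to 0$ uniformly on $D_{\g,\r}$ as $\r\to\infty$, so $g(\fa)$ admits a convergent expansion in $z^{-r}$ (and $\log z$ in the odd case) whose coefficients match the formal ones. In the trigonometric case, $\ee^{\fa} = (-rz^{-2})^{r/2}\ee^{\fa-\tilde\phi_0}$ decays polynomially on $D_{\g,\r}$, so $g(\fa)=-\ee^{(n-1)\fa}+\G(\ee^{\fa})$ is analytic there and its expansion behaves the same way. Combining these with Proposition \ref{prop:formalsolutionsoftheinnerequationconcreta}, the cancellations predicted by that proposition take place and the tail yields the announced analytic bound.

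The main technical obstacle is the bookkeeping in the odd trigonometric case, where one must simultaneously track the logarithms already present in $\fa$, those produced by differentiating $\log z$ inside $\Delta^2\tilde\phi_0$, and the power of $\log z$ allowed by the norm of $\Ein^{\log}_{\nu,k}$. Once $\r_0$ is chosen large enough so that all residual logarithms are absorbed by the extra factor of $|z|^{-r}$ available from $(n+1)r+2 > nr+2$, the conclusion $\epsilon_0\in \Ein_{nr+2}$ follows.
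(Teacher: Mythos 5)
Your proposal is correct and follows the same route the paper intends: the paper states the corollary without proof, remarking only that it follows from Proposition~\ref{prop:formalsolutionsoftheinnerequationconcreta}, and your argument supplies exactly the missing details — reading (1) and (2) off the explicit finite truncations \eqref{def:phi0even}--\eqref{def:phi0oddt}, and for (3) combining the formal order $(n+1)r+2$ of $\epsilon_n$ from that proposition with the observation that $\phi_0$ is a finite sum of elementary blocks, so the formal expansion upgrades to a genuine analytic bound and the spare factor $|z|^{-r}$ absorbs the logarithms, giving $\epsilon_0\in\Ein_{nr+2}$.
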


The proof of Proposition \ref{prop:existence} is performed in two steps. In
Section~\ref{subsec:linearizatedequation} we introduce a linear
equation which is close to the first order variational equation
of~\eqref{innereq2}  with respect to $\fa$. Such linear equation can
be easily inverted in the adequate Banach spaces. Finally, in
Section~\ref{subsec:fixedpointinner} we look for $\f^{\u,\s}$ as a solution
of a suitable fixed point equation.

From now on we will only deal with the $-\u-$ case, being the $-\s-$ case analogous. For that reason we will skip $-\u-$ from our notation.

\subsection{The linearized inner equation}\label{subsec:linearizatedequation}
We introduce the function
\begin{equation}\label{defH}
H(z) = (1+z^{-1})^{\eH}-2 + (1-z^{-1})^{\eH}
\end{equation}
for both cases, the polynomial and the trigonometric one with $\eH$ defined in~\eqref{defeH}.
In this section we are going to study the following linear
homogeneous second order difference equation:
\begin{equation}\label{linealequationH}
\Delta^2(\phi)(z) = H(z) \phi(z).
\end{equation}

We recall that the wronskian of two solutions, $\phi_1, \phi_2$ of a
linear difference equation is defined as:
\begin{equation*}
W(\phi_1,\phi_2)(z) = \left | \begin{array}{cc} \phi_1(z) & \phi_2(z) \\ \phi_1(z+1) & \phi_2(z+1) \end{array}\right  |.
\end{equation*}
In addiction, on the one hand, equation \eqref{linealequationH} has
the obvious solution $\vars(z) = z^{\eH}$ and, on the other hand, it
is a well known fact that $\varp = b\cdot \vars $ is a solution of
\eqref{linealequationH} if and only if
\begin{equation*}
\Delta b (z) = \frac{1}{\vars(z) \cdot \vars(z+1)} = \frac{1}{z^{\eH} (z+1)^{\eH}}.
\end{equation*}
One can also deduce that $W(\varp,\vars)\equiv 1$.

We will need a right inverse of the linear operator $\Delta$ defined
in appropriate Banach spaces. For this reason we introduce the
formal operator
\begin{equation}
\label{def:Delta-1} \Delta^{-1}(h)(z) = \sum_{k\geq 1} h(z-k)
\end{equation}
We emphasize that we are dealing with the unstable case.
\begin{lemma}\label{lemmaDelta1} Let $\alpha >0$. For any $\g>0$ there exists $\r_0>0$
such that, for any $\r\geq \r_0$, $\Delta^{-1}:\Ein_{\alpha+1,\g,\r}
\to \Ein_{\alpha,\g,\r}$ is a right inverse of the operator $\Delta$
defined in~\eqref{Delta1} with $\Vert \Delta^{-1} \Vert \leq C$.
\end{lemma}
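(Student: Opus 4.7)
The plan is to prove the lemma in three steps: (i)~show the formal series $\Delta^{-1}(h)(z) = \sum_{k\ge 1}h(z-k)$ is term-by-term defined on $D^{\u}_{\g,\r}$; (ii)~prove the key pointwise estimate
\begin{equation*}
\sum_{k\ge 1}\frac{1}{|z-k|^{\alpha+1}}\;\le\;\frac{C}{|z|^{\alpha}},\qquad z\in D^{\u}_{\g,\r},
\end{equation*}
for $\r$ large, with $C=C(\g,\alpha)$ independent of $\r$; (iii)~deduce boundedness and verify the right-inverse property by telescoping.

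Step~(i) is immediate: writing $z=x+iy$ with $|y|>\g x+\r$ one has $|y|>\g(x-k)+\r$ for every $k\ge 1$ since $\g, k>0$, so $z-k\in D^{\u}_{\g,\r}$. For step~(ii), I would split on the sign of $x=\re z$. When $x\ge 0$, the inequality $|y|>\g x$ yields $|z|\le\sqrt{1+\g^{-2}}\,|y|$; the summand $k\mapsto((x-k)^{2}+y^{2})^{-(\alpha+1)/2}$ is bell-shaped with peak $|y|^{-(\alpha+1)}$ at $t=x$, so integral comparison plus one copy of the peak gives
\begin{equation*}
\sum_{k\ge 1}\frac{1}{|z-k|^{\alpha+1}}\;\le\;\frac{1}{|y|^{\alpha+1}}+\int_{\RR}\frac{ds}{(s^{2}+y^{2})^{(\alpha+1)/2}}\;\le\;\frac{C_{\alpha}}{|y|^{\alpha}},
\end{equation*}
provided $|y|\ge 1$, which holds for $\r_{0}\ge 1$. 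When $x<0$, the leftward drift gives $|z-k|\ge k+|x|$, so integral comparison yields $\sum\le C'_{\alpha}|x|^{-\alpha}$; in the subcase $|y|\le|x|$ one has $|x|\ge|z|/\sqrt{2}$ and the bound follows, while in the subcase $|y|>|x|$, taking $\r_{0}>1+\g$ forces $|y|\ge 1$ (otherwise $|x|<|y|<1$ would give $|y|>\g x+\r>\r-\g>1$, a contradiction), and the integral bound of the first case again applies.

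Step~(iii) is then routine: $|\Delta^{-1}(h)(z)|\le\|h\|_{\alpha+1}\sum_{k\ge 1}|z-k|^{-(\alpha+1)}\le C\|h\|_{\alpha+1}|z|^{-\alpha}$, so $\Delta^{-1}$ maps $\Ein_{\alpha+1,\g,\r}$ into $\Ein_{\alpha,\g,\r}$ with operator norm bounded independently of $\r$; and the telescoping identity
\begin{equation*}
\Delta^{-1}(h)(z+1)-\Delta^{-1}(h)(z)=\sum_{k\ge 1}h(z+1-k)-\sum_{k\ge 1}h(z-k)=h(z),
\end{equation*}
which makes sense thanks to step~(i) (all arguments $z+1-k=z-(k-1)$ for $k\ge 1$ lie in $D^{\u}_{\g,\r}$), establishes $\Delta\circ\Delta^{-1}=\mathrm{Id}$. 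The main obstacle is the pointwise estimate in the regime $\re z<0$ with small $|\im z|$: the integral bound in $|y|$ degenerates as $|y|\to 0$, and one must invoke the leftward drift of the shifted points together with the geometric observation that in $D^{\u}_{\g,\r}$ a small $|\im z|$ forces a large $|\re z|$. This trade-off is what pins down the threshold $\r_{0}=\r_{0}(\g,\alpha)$.
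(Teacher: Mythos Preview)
Your proof is correct and complete. The paper itself does not provide a proof of this lemma, stating only that ``the proof of this lemma is straightforward and can be found in~\cite{Gel99}.'' Your argument --- invariance of $D^{\u}_{\g,\r}$ under left shifts, the integral-comparison estimate $\sum_{k\ge 1}|z-k|^{-(\alpha+1)}\le C|z|^{-\alpha}$ split according to the sign of $\re z$ and the relative size of $|\re z|$ and $|\im z|$, and the telescoping identity --- is precisely the standard argument referenced there, and your handling of the delicate regime $\re z<0$, $|\im z|$ small (where the domain constraint $|y|>\g x+\r$ forces $|x|$ large) is the point one has to get right.
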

The proof of this lemma is straightforward and can be found in \cite{Gel99}.

The first variational around $\fa$ of the inner
equation~\eqref{innereq2} is given by
\begin{equation}\label{firstvariationalinner}
\Delta^2(\phi)  = \Delta^2(\phi)  = D\gg(\fa) \phi
\end{equation}
and we notice that
\begin{equation*}
D\gg(\fa) = \left \{ \begin{array}{ll}
-n\fa^{n-1} + D\G(\fa) & \text{polynomial case}  \\
-(n-1) \ee^{\fa(n-1)} + D\G(\ee^{\fa})  \ee^{\fa} &
\text{trigonometric case.}
\end{array}\right .
\end{equation*}

By using the identities $c_0^{n-1}=-r(r+1)$ and $nr=r+2$, the fact
that $H(z)= (\eH-1) \eH z^{-2}+\gen_{3}$ and
Corollary~\ref{cor:firstaproximation}, the following result follows:
\begin{lemma}\label{lemmaleqH} For any $\g>0$ there exists $\r_0>0$ big enough such that
\begin{enumerate}
\item The function $H(z)$ satisfies
$ H = Dg(\fa) - \OL $, with $\OL \in \Ein_{r+2}$, if
$n\neq 3$, and $\OL \in \Ein_{r+2,1}^{\log}$, if $n=3$,
\item The function $\vars(z)=z^{\eH}$ is a solution of equation~\eqref{linealequationH}.
Consequently,
the function $\varp$ defined by
\begin{align*}
\varp(z) &= z^{\eH} \sum_{k>0} \frac{1}{(z-k)^{\eH}(z-(k+1))^{\eH}},
\end{align*}
is also an independent solution with $W(\varp,\vars)=1$. By
Lemma~\ref{lemmaDelta1}, $\varp\in \Ein_{\eH-1}$
\end{enumerate}
\end{lemma}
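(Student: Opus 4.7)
The plan is to prove both assertions by direct computation, combining the asymptotic expansion of $\fa$ provided by Corollary~\ref{cor:firstaproximation} with an elementary Taylor expansion of $H$ and a reduction-of-order construction based on Lemma~\ref{lemmaDelta1}.

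For assertion (1), the first step is the expansion
\[
H(z) = (1+z^{-1})^\eH - 2 + (1-z^{-1})^\eH = \eH(\eH-1) z^{-2} + \gen(z^{-4}),
\]
for which $\gen(z^{-4}) \in \Ein_{r+2}$ since $r+2 \leq 4$ for every $n\geq 2$. In the polynomial case $D\gg(\fa) = -n \fa^{n-1} + D\G(\fa)$, and Corollary~\ref{cor:firstaproximation} gives $\fa = c_0 z^{-r} + \gen_{2r}$ (respectively $\gen_{2r,1}$ when $n=3$). Hence $\fa^{n-1} = c_0^{n-1} z^{-(n-1)r} + \gen_{nr}$, with a logarithm appearing in the subleading term only when $n=3$. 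Using the identities $(n-1)r=2$, $nr = r+2 = \eH$ and $c_0^{n-1} = -r(r+1)$, the leading term of $-n\fa^{n-1}$ equals $nr(r+1)z^{-2} = \eH(\eH-1)z^{-2}$, matching that of $H$. Since $\G(y) = \gen(y^{n+1})$ we also have $D\G(\fa) \in \Ein_{nr} = \Ein_{r+2}$. Collecting all remainders yields $D\gg(\fa) - H \in \Ein_{r+2}$ for $n\neq 3$, while for $n=3$ the cross product between $c_0 z^{-1}$ and the $\gen_{2,1}$ part of $\fa$ produces a $z^{-3}\log z$ contribution that forces $D\gg(\fa) - H \in \Ein_{r+2,1}^{\log}$. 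In the trigonometric case the analogous computation, starting from $\fa = (r/2)\log(-rz^{-2}) + \gen_r$ and using $\ee^{(n-1)\fa} = -r z^{-2} + \gen_{r+2}$ (thanks to $(n-1)r = 2$), yields $D\gg(\fa) = (n-1) r z^{-2} + \gen_{r+2} = \eH(\eH-1) z^{-2} + \gen_{r+2}$ since $\eH = 2$.

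For assertion (2), the fact that $\vars(z) = z^\eH$ solves~\eqref{linealequationH} is an immediate calculation:
\[
\Delta^2(z^\eH) = z^\eH\bigl[(1+z^{-1})^\eH - 2 + (1-z^{-1})^\eH\bigr] = H(z)\, z^\eH.
\]
Writing $\varp = b\cdot \vars$ and substituting into $\Delta^2 \varp = H\varp$, the contribution from $\vars$ cancels using $\Delta^2 \vars = H\vars$, reducing the condition to $\Delta b(z)\,\vars(z+1) = \Delta b(z-1)\,\vars(z-1)$; the ansatz $\Delta b(z) = 1/[\vars(z)\vars(z+1)] = 1/[z^\eH (z+1)^\eH]$ satisfies it, and a direct evaluation of the $2\times 2$ determinant defining $W(\varp,\vars)$ produces the stated constant value $1$. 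On the domain $D_{\g,\r}$ with $\r$ sufficiently large, the function $1/[z^\eH(z+1)^\eH]$ lies in $\Ein_{2\eH}$ because $|z+1|$ is comparable to $|z|$ there; Lemma~\ref{lemmaDelta1} applied with $\alpha = 2\eH-1$ then supplies $b = \Delta^{-1}\bigl(1/[z^\eH(z+1)^\eH]\bigr) \in \Ein_{2\eH-1}$, and multiplication by $\vars(z) = z^\eH$ yields $\varp \in \Ein_{\eH-1}$, as claimed.

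The only delicate point is the bookkeeping in the polynomial case $n=3$ of assertion (1), where a logarithmic correction appears precisely at the boundary order $z^{-(r+2)}\log z$ and forces the weaker conclusion $\OL \in \Ein_{r+2,1}^{\log}$ rather than $\Ein_{r+2}$. All other estimates are routine consequences of Corollary~\ref{cor:firstaproximation} and the mapping properties of $\Delta^{-1}$ from Lemma~\ref{lemmaDelta1}.
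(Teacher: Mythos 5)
Your argument is correct and coincides with the paper's own justification of this lemma, which is precisely the one-line reduction to the identities $c_0^{n-1}=-r(r+1)$, $nr=r+2$, the expansion $H(z)=\eH(\eH-1)z^{-2}+\gen_{3}$ and Corollary~\ref{cor:firstaproximation}, here fleshed out; in particular your bookkeeping showing that the $z^{-3}\log z$ cross term sits exactly at order $z^{-(r+2)}\log z$ when $n=3$ but is absorbed into $\Ein_{r+2}$ for odd $n\ge 5$ is the right way to see why only $n=3$ requires the $\log$-weighted space. The only quibble, inherited from the paper's own statement, is a sign: with $\Delta b(z)=1/[\vars(z)\vars(z+1)]$ the determinant gives $W(\varp,\vars)=-1$, so one should take $\Delta b(z)=-1/[\vars(z)\vars(z+1)]$ (equivalently replace $\varp$ by $-\varp$) to obtain $W(\varp,\vars)=1$, which changes nothing in the membership $\varp\in\Ein_{\eH-1}$ or in any later estimate.
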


We notice that property~(1) of Lemma~\ref{lemmaleqH} implies that
the linear equation~\eqref{linealequationH} is a good approximation
of the first order variational with respect to
$\fa$~\eqref{firstvariationalinner}.

Finally, as we will see in the lemma below, Lemma~\ref{lemmaleqH}
allows us to invert the linear operator
$\lop(\phi)(z)=\Delta^2(\phi)(z) - H(z) \phi(z)$.
\begin{lemma}\label{inloplemma} For any $\g>0$, there exists $\r_0>0$ such that for any $\r\geq \r_0$,
the operator $\lop(\phi)=\Delta^2(\phi) - H\cdot  \phi$ has right
inverse $\inlop:\Ein_{\alpha+2,\g,\r} \to \Ein_{\alpha,\g,\r}$ i if
$\alpha>\eH-1$ and it has the expression
\begin{equation}\label{inlopdef}
\inlop( h ) = \varp \cdot \Delta^{-1}(\vars \cdot h) - \vars \cdot \Delta^{-1}(\varp \cdot h).
\end{equation}
Moreover, $\Vert \inlop (h) \Vert_{\alpha,\g,\r} \leq C \Vert h \Vert_{\alpha+2,\g,\r}$ being $C$ an independent constant of $\g,\r$.
\end{lemma}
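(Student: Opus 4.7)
My plan is to verify the variation-of-constants formula~\eqref{inlopdef} by direct algebra and then bound each factor in the Banach scales defined by the norms $\Vert\cdot\Vert_{\nu,\g,\r}$.

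First the algebraic step. Writing $A=\Delta^{-1}(\vars h)$ and $B=\Delta^{-1}(\varp h)$, so that $A(z+1)-A(z)=\vars(z)h(z)$ and similarly for $B$, I substitute in $\phi=\varp A-\vars B$ and expand $\phi(z\pm 1)$ using the shift relations. The only new term at $z+1$ is the bracket $\varp(z+1)\vars(z)-\vars(z+1)\varp(z)=-W(\varp,\vars)(z)=-1$, provided by Lemma~\ref{lemmaleqH}(2); the analogous bracket at $z-1$ vanishes identically. Assembling $\Delta^2\phi(z)$, the remaining contributions factor as $\Delta^2\varp(z)\cdot A(z)-\Delta^2\vars(z)\cdot B(z)$, which, since both $\varp$ and $\vars$ solve $\Delta^2\eta=H\eta$, equals $H(z)\phi(z)$. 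In total $\lop(\phi)=\pm h$, which (modulo an overall sign in the formula) identifies $\inlop$ as a right inverse of $\lop$.

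Next the functional-analytic step. By Lemma~\ref{lemmaleqH}(2), $\varp\in\Ein_{\eH-1}$ and $\vars(z)=z^{\eH}$ satisfies $\Vert\vars h\Vert_{\alpha+2-\eH}=\Vert h\Vert_{\alpha+2}$ by direct computation with the norm. The hypothesis $\alpha>\eH-1$ rewrites as $\alpha+1-\eH>0$, which is precisely the condition of Lemma~\ref{lemmaDelta1} ensuring $\Delta^{-1}(\vars h)\in\Ein_{\alpha+1-\eH}$ with norm bounded by $C\Vert h\Vert_{\alpha+2}$. For the second term, the product estimate of Lemma~\ref{Banachspacelemma} gives $\varp h\in\Ein_{\alpha+\eH+1}$, whose index exceeds $1$ automatically, hence $\Delta^{-1}(\varp h)\in\Ein_{\alpha+\eH}$. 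Multiplying back by $\varp\in\Ein_{\eH-1}$ and $\vars$ respectively, the two summands of $\inlop(h)$ land in $\Ein_{\eH-1+\alpha+1-\eH}=\Ein_{\alpha}$ and $\Ein_{-\eH+\alpha+\eH}=\Ein_{\alpha}$, with norms bounded by a constant (uniform in $\g,\r$) times $\Vert h\Vert_{\alpha+2}$.

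The only delicate point is the exact threshold $\alpha>\eH-1$: every application of $\Delta^{-1}$ in the formula is comfortable except for $\Delta^{-1}(\vars h)$, whose integrand decays only as $|z|^{-(\alpha+2-\eH)}$ because $\vars$ grows like $z^{\eH}$. The rest of the proof—convergence of the telescoping sums entering $\Delta^{-1}$, the product bounds, and the uniformity of all constants in $\g,\r$—follows at once from Lemmas~\ref{lemmaDelta1} and~\ref{Banachspacelemma} without further work.
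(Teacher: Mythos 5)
Your proof is correct and follows essentially the same route as the paper: the formula comes from variation of constants using the fundamental system $\varp,\vars$ of $\lop(\phi)=0$, and the bound follows from $\vars h\in\Ein_{\alpha+2-\eH}$, $\varp h\in\Ein_{\alpha+\eH+1}$, Lemma~\ref{lemmaDelta1} (whose applicability is exactly the threshold $\alpha>\eH-1$), and the product estimates. You are merely more explicit than the paper in verifying the telescoping algebra behind the variation-of-constants formula; the overall sign ambiguity you flag is real but harmless and is present in the paper's own conventions as well.
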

\begin{proof}
We will skip $\g,\r$ from the notation. On the one hand
$\varp,\vars$ are independent solutions of the homogeneous linear
equation $\lop(\phi)=0$ and hence, by the variation of constants
method, we obtain formula~\eqref{inlopdef}. On the other hand, if
$g\in \Ein_{\alpha+2}$ with $\alpha>\eH-1$, then,  $\vars\cdot g \in
\Ein_{\alpha+2-\eH} $ and $\varp \cdot g \in \Ein_{\alpha+\eH+1}$ and by
Lemma~\ref{lemmaDelta1}, $\varp \cdot \Delta^{-1}(\vars \cdot g) \in
\Ein_{\alpha} $ and  $\vars \cdot \Delta^{-1}(\varp \cdot g)\in
\Ein_{\alpha}$. The bound $\Vert \inlop (g) \Vert_{\alpha} \leq C
\Vert g \Vert_{\alpha+2}$ is obtained by a direct application of
Lemma~\ref{lemmaDelta1}.
\end{proof}

\subsection{The fixed point equation}\label{subsec:fixedpointinner}
In this section we are going to prove Proposition
\ref{prop:existence} about the existence and properties of solutions
of the
inner equation~\eqref{innereq}
(polynomial case)
and~\eqref{innereqt} (trigonometric case)
\begin{equation*}
\Delta^2 (\phi) = -\gg(\phi)
\end{equation*}
of the form $\phi = \fa + \f$, with $\fa$ given
by~\eqref{def:phi0even} ($n$ even), \eqref{def:phi0oddp} ($n$ odd,
polynomial case) or~\eqref{def:phi0oddt} ($n$ odd, trigonometric
case).

We introduce
\begin{equation}\label{restoeqpsi}
\R0  =  -\Delta^2 (\fa) +\gg(\fa),\;\;\;\;\;\;
 \mathcal{R}(\f) =  \f^2 \int_{0}^1 D^2 \gg(\fa + \lambda \f) (1-\lambda) \,d\lambda
\end{equation}
and we note that if $\phi=\fa + \f$ is a solution of the
inner equation
then, by (1) of Lemma \ref{lemmaleqH} $\f$ has to satisfy
the second order difference equation given by
\begin{equation}\label{psiequation}
\Delta^2 (\f)  - H\cdot \f = \R0 +  \OL\cdot  \f + \mathcal{R}(\f)
\end{equation}
As we proved in Lemma~\ref{inloplemma}, the linear operator~$\lop$
has a right-inverse in some adequate Banach spaces. Using it, we
will obtain a solution of the equation~\eqref{psiequation} by using
the fixed point equation given by
\begin{equation}\label{psifixedpointequation}
\f=\mathcal{F}(\f):=\inlop(\R0) + \inlop(\OL \cdot \f) + \inlop
\circ \mathcal{R}(\f).
\end{equation}
\begin{proposition} \label{existenceproposition}
Let $\g>0$. There exists $\r_1>0$ big enough such that, for any $\r\geq \r_1$, the fixed point
equation \eqref{psifixedpointequation} has a unique solution $\f\in
\Ein_{r+2,\g,\r}$.
\end{proposition}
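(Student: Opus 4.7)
The strategy is a standard contraction mapping argument for $\mathcal{F}$ on a closed ball of $\Ein_{r+2,\g,\r}^{\u}$. The essential structural input is Lemma~\ref{inloplemma}, which furnishes a right inverse $\inlop\colon\Ein_{\alpha+2}\to\Ein_{\alpha}$ of $\lop$ with norm bounded by a constant $C$ independent of $\g$ and $\r$ for every $\alpha>\eH-1$; in particular $\alpha=r+2$ is always admissible. Thus everything reduces to controlling the three inputs of $\mathcal{F}$ in the space $\Ein_{r+4,\g,\r}$, one power of decay better than the target.

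First I would collect the decay of the three ingredients. From Corollary~\ref{cor:firstaproximation}(3) and $nr=r+2$ one has $\R0=-\epsilon(\fa)\in\Ein_{r+4}$, so $\|\inlop(\R0)\|_{r+2}\leq C\|\R0\|_{r+4}=:K_0$. By Lemma~\ref{lemmaleqH}(1), $\OL\in\Ein_{r+2}$ (for $n\neq 3$), hence Lemma~\ref{Banachspacelemma} yields
\begin{equation*}
\|\OL\cdot\f\|_{r+4}\leq C\r^{-r}\|\OL\|_{r+2}\|\f\|_{r+2}.
\end{equation*}
Finally, Taylor-expanding $g$ around $\fa$ and using that on $D_{\g,\r}$, for large $\r$, $\fa$ is of size $z^{-r}$ in the polynomial case and $-r\log z+O(1)$ in the trigonometric one, one checks that $D^2 g(\fa+\lambda\f)$ is uniformly bounded on $D_{\g,\r}\times[0,1]$ whenever $\|\f\|_{r+2}$ is small, which gives $\|\mathcal{R}(\f)\|_{r+4}\leq C\|\f\|_{r+2}^2$.

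Next I would work on the closed ball $\bar B(\kappa)=\{\f\in\Ein_{r+2,\g,\r}^{\u}:\|\f\|_{r+2}\leq\kappa\}$ with $\kappa:=2K_0$. The three estimates above combine into
\begin{equation*}
\|\mathcal{F}(\f)\|_{r+2}\leq K_0+C\r^{-r}\kappa+C\kappa^2.
\end{equation*}
Choosing $K_0$ small via the smoothness of $\R0$ (already built into the construction of $\fa$ in Section~\ref{sec:formalsolution}) and taking $\r_1$ large, $\mathcal{F}$ sends $\bar B(\kappa)$ into itself. An analogous computation for
\begin{equation*}
\mathcal{F}(\f_1)-\mathcal{F}(\f_2)=\inlop\bigl(\OL\cdot(\f_1-\f_2)\bigr)+\inlop\bigl(\mathcal{R}(\f_1)-\mathcal{R}(\f_2)\bigr),
\end{equation*}
together with the mean-value estimate $\|\mathcal{R}(\f_1)-\mathcal{R}(\f_2)\|_{r+4}\leq C\kappa\|\f_1-\f_2\|_{r+2}$, produces a Lipschitz constant at most $1/2$ on $\bar B(\kappa)$, and the Banach fixed point theorem delivers the unique $\f\in\bar B(\kappa)$ claimed.

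The main technical obstacle I anticipate is the logarithmic case $n=3$, where $\OL\in\Ein_{r+2,1}^{\log}$ carries an additional $\log z$, and in the trigonometric case, where $\fa$ itself contains a logarithm so that the Taylor expansion of the exponential non-linearity has to be controlled in the spaces $\Ein_{\nu,k}^{\log}$. In both cases the cleanest way out is to use the elementary bound $|\log z|\leq C_\delta |z|^\delta$ valid on $D_{\g,\r}$ for $\r$ large and any fixed $\delta>0$, which converts every logarithmic factor into a small power of $|z|$; the gain $\r^{-r}$ in the contraction estimate is then merely weakened to $\r^{-(r-\delta)}$, still enough to close the argument once $\delta<r$ is fixed.
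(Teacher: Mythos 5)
Your proposal follows essentially the same route as the paper: invert $\lop$ via Lemma~\ref{inloplemma} (noting $\alpha=r+2>\eH-1$), and run a contraction argument for $\mathcal{F}$ on a ball of $\Ein_{r+2,\g,\r}$ by estimating $\epsilon_0$, $\OL\cdot\f$ and $\mathcal{R}(\f)$ in $\Ein_{r+4}$; the treatment of the logarithms for $n=3$ and in the trigonometric case by trading $\log z$ for a small power of $z$ is also the paper's device ($\nu_r=r/2$). The one genuine divergence is how the contraction constant is made less than $1$, and there your write-up has a soft spot. The paper never asks the ball to be small: it proves $|D^2\gg(\fa+\lambda\f)|\leq K|z|^{r-2}$ by a Cauchy estimate (using $|\fa+\lambda\f|\asymp|z|^{-r}$ and $nr=r+2$), so that $\mathcal{R}(\f_1)-\mathcal{R}(\f_2)\in\Ein_{r+6}$ and the embedding into $\Ein_{r+4}$ costs a factor $C\r^{-2}$; combined with the $\r^{-r+\nu_r}$ gain from the $\OL$ term, the Lipschitz constant is small for $\r$ large whatever the radius $\rpf=2\Vert\inlop(\epsilon_0)\Vert_{r+2}$ is. You instead bound $\Vert\mathcal{R}(\f_1)-\mathcal{R}(\f_2)\Vert_{r+4}\leq C\kappa\Vert\f_1-\f_2\Vert_{r+2}$ and then require $\kappa=2K_0$ to be small, appealing to the ``smoothness of $\epsilon_0$''. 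As justified this is not available: Corollary~\ref{cor:firstaproximation} gives exactly $\epsilon_0\in\Ein_{r+4}$, the borderline exponent, and membership there alone does not force $\Vert\epsilon_0\Vert_{r+4,\g,\r}\to 0$ as $\r\to\infty$; one would have to invoke the sharper (true but unstated) decay $\epsilon_0=\gen_{2r+4}$ of the order-$n$ truncation error. The fix is already in your toolbox: tracking weights as you did for the $\OL$ term, $\f_1^2-\f_2^2\in\Ein_{2r+4}$ together with the boundedness of $D^2\gg$ and Lemma~\ref{Banachspacelemma} gives $\Vert\mathcal{R}(\f_1)-\mathcal{R}(\f_2)\Vert_{r+4}\leq C\r^{-r}\kappa\Vert\f_1-\f_2\Vert_{r+2}$, and the extra $\r^{-r}$ closes the argument with no smallness assumption on $K_0$. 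With that adjustment your proof coincides with the paper's.
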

\begin{proof}
We first note that there exists $\r_0>0$ such that $\inlop(\R0) \in \Ein_{r+2,\g,\r_0}$ since, by
Corollary~\ref{cor:firstaproximation}, $\R0\in \Ein_{r+4,\g,\r_0}$ if $\r_0$ is large enough.
Let $\rpf =  2\Vert \inlop(\R0) \Vert_{r+2,\g,\r_0}$.
Along the proof of this proposition we will denote by $K$ a generic
constant depending only on $\phi_0$, $\r_0$ and $\g$ and we will omit the dependence on $\g$ and $\r$
in the Banach spaces and norms.

Let $\f_1,\f_2 \in B(\rpf)\subset \Ein_{r+2}$. We start by bounding
the difference $\Vert \mathcal{F}(\f_1) -\mathcal{F}(\f_2)
\Vert_{r+2}$. By Lemma \ref{lemmaleqH} we have that, taking $\nu_{r} = 0$, if $n\neq 3$, and
$\nu_r = r/2$, if $n=3$, we have that $\OL\in \Ein_{r+2-\nu_r,\g,\r_1}$
provided that $\r_1$ is large enough. Henceforth, if $\f \in
\Ein_{r+2}$, $\OL \cdot \f \in \Ein_{2r+4-\nu_r}$. Applying
Lemmas~\ref{Banachspacelemma} and~\ref{inloplemma} we can easily
check that
\begin{equation}\label{boundLipD}
\Vert \inlop (\OL \cdot (\f_1 - \f_2)) \Vert_{r+2} \leq  C \r_1^{-r+\nu_r}
\Vert \OL\Vert_{r+2-\nu_r} \cdot \Vert \f_1 - \f_2 \Vert_{r+2}.
\end{equation}
Now we deal with $\mathcal{R}(\f_1) - \mathcal{R}(\f_2)$. We recall
that $\mathcal{R}$ was defined in~\eqref{restoeqpsi}. We notice
that
\begin{align}\label{defR1}
\mathcal{R}(\f_1) - \mathcal{R}(\f_2) = &
\big(\f_1^2 -\f_2^2 \big)  \int_{0}^{1} D^2 \gg(\fa+ \lambda \f_1)(1-\lambda) \,d\lambda \notag \\
& +\f_2^2  \int_{0}^{1} [D^2 \gg(\fa+ \lambda \f_1)-D^2
\gg(\fa+\lambda\f_2)](1-\lambda) \,d\lambda .
\end{align}
We first claim that, if $\lambda \in [0,1]$ and $z\in D_{\g,\r_1}$ with $\r_1$ big enough,
\begin{equation}\label{D2gR}
|D^{2}\gg(\fa(z)+ \lambda \f_1(z))|  \leq K  |z|^{\eH-4} \leq K |z|^{-2+r}
\end{equation}
where $\eH$ was defined in~\eqref{defeH}.
Indeed, we deal first with the polynomial case. In this case, by definition~\eqref{def:F} of $\gg$, there exists a constant
$K$ such that $|\gg(y)|\leq K |y|^{n}$. Moreover since $\gg$ is an analytic function, Cauchy's theorem
implies that, if $y_0 \in \mathbb{D}(\rF/2)$
\begin{equation}\label{Cauchyestimates}
|D^{2} \gg(y_0)| \leq K  |y_0|^{-2} \sup_{|y-y_0| \leq |y_0|/2} |\gg(y)| \leq K |y_0|^{n-2}.
\end{equation}
Also, since $\f_1 \in B(\rpf) \subset \Ein_{r+2}$, there exist
constants $0<K_1\leq  K_2$ and $\r_1$ big enough, $K_1 |z|^{-r} \leq
|\fa (z) + \lambda \f_1(z) | \leq  K_2 |z|^{-r} < \rF/2$ for any
$\lambda \in [0,1]$ and $z\in D_{\g,\r_1}$. Then, using $nr=r+2$ and
estimate~\eqref{Cauchyestimates}
\begin{equation*}
|D^{2}\gg(\fa(z)+ \lambda \f_1(z))|  \leq K|\phi_0(z) + \lambda \f_1(z) |^{n-2}
\leq  K  |z|^{2r} |z|^{-rn} = K  |z|^{-2+r}
\end{equation*}
which proves bound~\eqref{D2gR} in the polynomial case.  The trigonometric case is easier since
$|\gg(y)|\leq K |\ee^{y(n-1)}|$ and henceforth, a standard Cauchy estimate leads to bound~\eqref{D2gR}.
Hence, if $\f_1,\f_2 \in B(\rpf)$,
\begin{align*}
|D^2 \gg(\fa(z)+ \lambda (\f_1(z)))\cdot (\f_1^2(z) -\f_2^2(z))|\leq K |z|^{-4} |\f_1(z) -\f_2(z)|.
\end{align*}

Now we claim that, for $\lambda \in [0,1]$ and $\f_1,\f_2\in
B(\rpf)$,
$$
|[D^2 \gg(\phi_0(z)+ \lambda (\f_1(z)))- D^2
\gg(\phi_0(z)+\lambda(\f_2(z)))] \f_2^2(z)| \leq   K|z|^{-2r-4}
|\f_1(z) - \f_2(z)|.
$$
Indeed, since $\gg$ is an analytic function, $D^3 \gg$ is bounded in
$\mathbb{D}(\rF)$ and henceforth,  for any $y_1,y_2\in
\mathbb{D}(\rF)$,
$
|D^2 \gg(y_1) - D^2 \gg(y_2) |\leq K|y_1-y_2|
$
and the claim is proved provided that $\r_1$ is large enough to
ensure that for any $z\in D_{\g,\r_1}$, $\f_1(z),\f_2(z) \in
\mathbb{D}(\rF)$.

Finally by using the previous computations and
formula~\eqref{defR1}, one obtains that $\mathcal{R}(\f_1)
-\mathcal{R}(\f_2) \in \Ein_{r+6} \cup \Ein_{(2r+4) + r+2} =
\Ein_{r+6} \subset \Ein_{r+4}$ and moreover
\begin{equation}\label{boundLipR1}
\Vert \mathcal{R} (\f_1) -  \mathcal{R} (\f_2)\Vert_{r+4} \leq C |\r|^{-2} \Vert \f_1 - \f_2\Vert_{r+2}.
\end{equation}
Then, by Lemma~\ref{inloplemma}, $\inlop(\mathcal{R}(\f_1) -
\mathcal{R}(\f_2)) \in \Ein_{r+2}$ and moreover
\begin{equation*}
\Vert \inlop(\mathcal{R}(\f_1) - \mathcal{R}(\f_2))\Vert_{r+2} \leq  C |\r|^{-2} \Vert \f_1 - \f_2 \Vert_{r+2}.
\end{equation*}
Using this bound, \eqref{boundLipD} and
definition~\eqref{psifixedpointequation} of the
operator~$\mathcal{F}$, one has that, if $\r_1$ is large enough and
$\r\geq \r_1$
\begin{equation*}
\Vert \mathcal{F}(\f_1) - \mathcal{F}(\f_2) \Vert_{r+2} \leq C \r_1^{-r+\nu_r} \Vert \f_1 - \f_2\Vert_{r+2} \leq \frac{1}{2} \Vert \f_1 - \f_2\Vert_{r+2}
\end{equation*}
and hence $\mathcal{F}$ is contractive (we recall that $r-\nu_r>0$). Moreover, if $\f\in B(\rpf)$,
\begin{equation*}
\Vert \mathcal{F} (\f) \Vert_{r+2} \leq \Vert \mathcal{F}(0) \Vert_{r+2} + \Vert \mathcal{F}(0)-\mathcal{F}(\f)\Vert_{r+2} \leq
\Vert \R0 \Vert_{r+2} + \frac{1}{2} \Vert \f \Vert_{r+2} < \rpf
\end{equation*}
which ends the proof of the Proposition.
\end{proof}

\section{The difference $\phi^{\u}-\phi^{\s}$}
\label{sec:difference} By Proposition \ref{prop:existence} the
existence of two solutions $\phi^{\u,\s} = \fa + \f^{\u,\s}$ of the
inner equation
is proved.
Let us write $\difin = \phi^{\u}-\phi^{\s}$ and we also introduce the
function
\begin{equation}\label{defBinner}
\UR = -\int_{0}^{1} (1-\lambda) D^2 g(\phi^{\s} +
\lambda(\phi^{\u}- \phi^{\s}))\,d\lambda \cdot (\phi^{\u}-\phi^{\s} ).
\end{equation}

We recall that both
$\phi^{\u,\s}$ are solutions of the same nonlinear difference
equation:
\begin{equation}\label{psiequationinner}
\Delta^2 (\phi) = -\phi^{n} + \G(\phi)=-\gg(\phi).
\end{equation}
Consequently, the function $\difin$ satisfies the linear difference
equation
\begin{equation}\label{eqdif}
\Delta^{2} (\difin) = (-D\gg(\phi^{\s}) + \UR)\cdot \difin .
\end{equation}
Although we do not have a good representation of the difference
$\difin = \phi^{\u}-\phi^{\s}$, by means of Proposition
\ref{prop:existence} we already know it is well defined and some not
optimal bounds for~$\difin$ which allow us to define a new linear
equation from which $\difin$ is also a solution. In conclusion, we
will use $\difin = \phi^{u}-\phi^{s}$ both as a known function (to
define~$\UR(z)$) and as an unknown solution of the above linear
equation.

The goal of this section is to prove that any analytic solution of
equation~\eqref{eqdif} satisfying adequate boundary condition has to
be exponentially small, that is of~$\gen(\ee^{-2\pi \ic z})$. In
fact, as claimed in Theorem~\ref{differencetheorem}, we will provide
an exact formula for~$\difin$.
\subsection{Notation}
Given $\r,\g>0$, let us recall the complex domain
\begin{equation*}
E_{\g,\r}=D_{\g,\r}^{\u} \cap D_{\g,\r}^{\s} \cap \{ z\in \C:  \im z <0\}\backslash \{z\in \C : |\re z|\leq 1 , \; |\im z| \leq \r+\g\}
\end{equation*}
defined in~\eqref{defEdomain} (see Figure \ref{dominiiner}).

For $\nu,k\in \mathbb{R}$, we also introduce the norms
\begin{equation*}
\Vert \fB \Vert_{\nu,\g,\r} = \sup_{z \in E_{\g,\r} }|z^{\nu}  \fB(z)|,\;\;\;\;\;\;\;
\Vert \fB \Vert_{\nu,k,\g,\r}^{\log} = \sup_{z \in E_{\g,\r} }|z^{\nu} (\log z)^{-k} \fB(z)|,
\end{equation*}
and the Banach spaces
\begin{align*}
\EInt_{\nu,\g,\r} &= \{\fB : E_{\g,\r} \to \C \;\;\text{such that} \;\; \Vert \fB\Vert_{\nu,\g,\r} <+\infty\}, \\
\EInt_{\nu,k,\g,\r}^{\log} &= \{\fB : E_{\g,\r} \to \C \;\;\text{such that} \;\; \Vert \fB\Vert_{\nu,k,\g,\r}^{\log} <+\infty\},
\end{align*}
If there is no danger of confusion we will simply denote
\begin{equation*}
\EInt_{\nu} = \EInt_{\nu,\g,\r},\;\;\;\; \Vert \cdot \Vert_{\nu} = \Vert \cdot \Vert_{\nu,\g,\r},\;\;\;\;
\EInt_{\nu,k}^{\log} = \EInt_{\nu,k,\g,\r}^{\log},\;\;\;\; \Vert \cdot \Vert_{\nu,k}^{\log} = \Vert \cdot \Vert_{\nu,k,\g,\r}^{\log}.
\end{equation*}
\begin{lemma}\label{Banachspacelemmainner}
Let $0<\nu_1,\nu_2$. For any $f\in \EInt_{\nu_1}$ and $g\in
\EInt_{\nu_2}$, then $f\cdot g \in \EInt_{\nu_1+\nu_2}$ and
\begin{equation*}
\Vert f \cdot g \Vert_{\nu_1 + \nu_2} \leq \Vert f \Vert_{\nu_1} \cdot \Vert g \Vert_{\nu_2}.
\end{equation*}
Also, there exists a constant $C$  such that if $0<\nu_1 <\nu_2$ and
$f\in \EInt_{\nu_2}$
\begin{equation*}
f\in \EInt_{\nu_1} \;\;\;\;\; \text{and}\;\;\;\;\; \Vert f
\Vert_{\nu_1} \leq C\r^{-(\nu_2-\nu_1)} \Vert f \Vert_{\nu_2}.
\end{equation*}
\end{lemma}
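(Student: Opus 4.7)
The plan is to verify the two assertions directly from the definition of the norm, in complete analogy with Lemma~\ref{Banachspacelemma} which handled the spaces $\Ein_\nu$ on the unstable/stable domains $D_{\gamma,\rho}^{\u,\s}$. The only subtlety is to exhibit a lower bound for $|z|$ on $E_{\gamma,\rho}$ of the form $|z| \geq K\rho$, so that the decay factor $|z|^{-(\nu_2-\nu_1)}$ can be turned into $\rho^{-(\nu_2-\nu_1)}$.

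For the product estimate, I would simply write
\[
\bigl| z^{\nu_1+\nu_2} f(z) g(z) \bigr| = \bigl| z^{\nu_1} f(z) \bigr| \cdot \bigl| z^{\nu_2} g(z) \bigr| \leq \Vert f \Vert_{\nu_1} \Vert g \Vert_{\nu_2}
\]
for any $z \in E_{\gamma,\rho}$ and then take the supremum. Nothing beyond the definition of the norm is used.

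For the inclusion estimate, I would first establish that on $E_{\gamma,\rho}$ one has $|z| \geq \rho$. Indeed, if $z \in D_{\gamma,\rho}^{\s} \cap D_{\gamma,\rho}^{\u}$, the two defining inequalities $|\im z| > -\gamma \re z + \rho$ and $|\im z| > \gamma \re z + \rho$ together yield $|\im z| > \gamma |\re z| + \rho \geq \rho$, so in particular $|z| \geq \rho$. With this in hand,
\[
\bigl| z^{\nu_1} f(z) \bigr| = |z|^{-(\nu_2-\nu_1)} \bigl| z^{\nu_2} f(z) \bigr| \leq \rho^{-(\nu_2-\nu_1)} \Vert f \Vert_{\nu_2},
\]
since $\nu_2 - \nu_1 > 0$, and taking the supremum gives the claimed bound with $C=1$ (the constant $C$ in the statement leaves room for a possibly different geometric lower bound on $|z|$, e.g.\ $|z| \geq K(\gamma)\rho$, but a trivial choice suffices).

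There is no genuine obstacle here: the lemma is a verbatim transcription of Lemma~\ref{Banachspacelemma} to the inner domain $E_{\gamma,\rho}$, and the only point worth checking is the elementary geometric fact that $E_{\gamma,\rho}$ lies outside the disk of radius $\rho$ centered at the origin, which follows straight from the defining inequalities of $D_{\gamma,\rho}^{\u,\s}$. The proof can therefore reasonably be omitted, as the authors indicate, with at most a one-line remark about the bound $|z|\geq \rho$ on $E_{\gamma,\rho}$.
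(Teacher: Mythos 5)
Your proof is correct and is exactly the direct verification the authors have in mind: the paper states both this lemma and its counterpart Lemma~\ref{Banachspacelemma} without proof, treating them as immediate consequences of the definition of the weighted sup-norm. Your one nontrivial observation, that the defining inequalities of $D_{\g,\r}^{\u}\cap D_{\g,\r}^{\s}$ force $|\im z|>\g|\re z|+\r$ and hence $|z|>\r$ on $E_{\g,\r}$, is accurate and is precisely what justifies the factor $\r^{-(\nu_2-\nu_1)}$.
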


As in previous sections, we will denote by $\gen_{\nu}$ and $\gen_{\nu,k}$ a generic function belonging to $\EInt_{\nu,\g,\r}$ and $\EInt_{\nu,k,\g,\r}^{\log}$
respectively.

\subsection{A right inverse of the operator $\Delta (\phi)(z) = \phi(z+1)-\phi(z)$}

In this section we are going to construct a right inverse of the linear operator $\Delta$:
\begin{equation}\label{delta1in}
\Delta (\phi)(z) = \phi(z+1) - \phi(z),
\end{equation}
defined on functions belonging to $\EInt_{\nu,k}^{\log}$ with $\nu,k \in \mathbb{R}$.
We will follow the results introduced in~\cite{FS01} (which provide
an explicit formula for $\Delta^{-1}$) and we also give useful properties
of this operator when it acts on $\EInt_{\nu+1,k}^{\log}$.

We first notice that, since $E_{\g,\r}$ is an open set, for any
$z\in E_{\g,\r}$ there exists $\sigma(z)$ such that $\{w\in \C :
|z-w|<2\sigma(z)\} \subset E_{\g,\r}$. In consequence, the complex
path $\gamma_z = \gamma_z^1 \vee \gamma_z^2$ (see
Figure~\ref{camiinner})
\begin{align}\label{path}
\gamma_z^1(t) &= \{ -i(\r+\g) (1-t) + t (z-\sigma(z)) , \;\; t \in [0,1)\}
,\notag \\ \gamma_z^{2}(t) &= \{ z -\sigma(z)+ it,\;\;\; t \in
(-\infty,0]\}
\end{align}
is contained into the complex set $E_{\g,\r}$.

\begin{figure}[h]
\begin{center}
  \includegraphics[width=10cm]{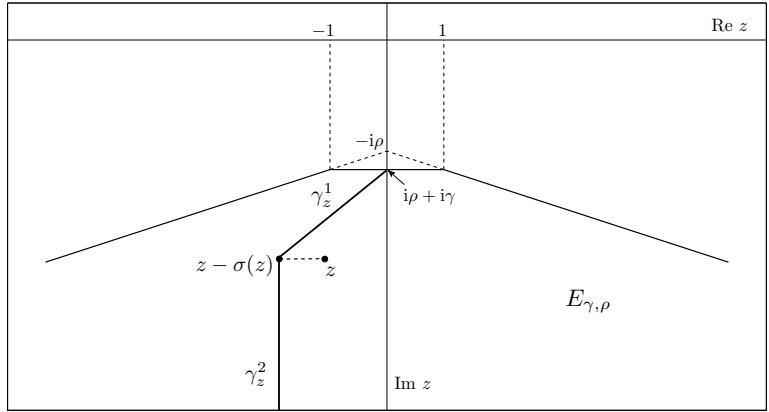}\\
  \caption{Path $\gamma_z$}\label{camiinner}
  \end{center}
\end{figure}

Given $\iff $ an analytic function and $z\in E_{\g,\r}$, we introduce the linear operators
\begin{equation}\label{deffinnunegpos}
\Delta^{-1}_{-}(h)(z) = \int_{\gamma_{z}} \frac{\iff(u)}{\ee^{2\pi
\ic (u-z)}-1} \,du \quad \text{ and } \quad \Delta^{-1}_{+}(h)(z) =
\int_{\gamma_{z}} \frac{\iff(u)}{1-\ee^{-2\pi \ic (u-z)}} \,du.
\end{equation}

\begin{proposition}\label{invDeltainner}
Let $\nu,k \in \mathbb{R}$ and $\gamma>0$. We define the linear operator
\begin{equation*}
\Delta^{-1} = \left \{ \begin{array}{ll} \Delta^{-1}_- & \text{if}\;\;\;\; \nu\leq 0 \\ \Delta^{-1}_+  & \text{if}\;\;\;\; \nu>0. \end{array}\right.
\end{equation*}
There exists $\r_0>0$ such that, for any $\r\geq \r_0$,
\begin{enumerate}
\item if $\nu\neq 0$, $\Delta^{-1}:\EInt^{\log}_{\nu+1,k,\g,\r} \to \EInt^{\log}_{\nu,k,\g,\r}$ is a right inverse of the
operator~$\Delta$.
\item if $\nu= 0$, $\Delta^{-1}:\EInt^{\log}_{1,k,\g,\r} \to \EInt^{\log}_{0,k+1,\g,\r}$ is a right inverse of the
operator~$\Delta$.
\end{enumerate}
Moreover in both cases, there exists a positive constant $C$ such that
$
\Vert \Delta^{-1} \Vert \leq C.
$
\end{proposition}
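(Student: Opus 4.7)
The plan has two essentially independent steps. First I verify the algebraic identity $\Delta \circ \Delta^{-1}_\pm = \mathrm{Id}$ by a residue computation exploiting the $1$-periodicity of the kernels. Second, I establish the norm bounds by splitting the path $\gamma_z$ into the slanted segment $\gamma_z^1$ and the vertical tail $\gamma_z^2$ and estimating each piece, with an extra $\log$ factor appearing in the borderline case $\nu = 0$.

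For the algebraic part, the observation $e^{2\pi \ic(u - (z+1))} - 1 = e^{2\pi \ic (u-z)} - 1$ gives
\[
\Delta^{-1}_-(h)(z+1) - \Delta^{-1}_-(h)(z) = \int_{\gamma_{z+1} - \gamma_z} \frac{h(u)}{e^{2\pi \ic (u-z)}-1}\, du.
\]
I close this contour by a horizontal segment at $\im u \to -\infty$, producing a closed loop that encircles only the single kernel pole at $u = z$ (the other poles $z + k$, $k \neq 0$, lie outside the relevant region provided $\r$ is large and $\sigma$ is chosen as in the definition of $\gamma_z$). The contribution from the closing segment vanishes because $|e^{2\pi \ic (u-z)}| \to \infty$ as $\im u \to -\infty$ while $h$ has only polynomial--logarithmic growth. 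The residue theorem then yields $h(z)$, establishing $\Delta \circ \Delta^{-1}_- = \mathrm{Id}$. The analogous computation for $\Delta^{-1}_+$ uses the decay $|e^{-2\pi \ic(u-z)}| \to 0$ to kill the closing segment.

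For the norm estimates, split $\gamma_z = \gamma_z^1 \cup \gamma_z^2$. On $\gamma_z^1$ the kernel is uniformly bounded (the path stays at distance at least $\sigma(z)$ from any pole $z + k$), the length is $O(|z|)$, and the weighted bound $|h(u)| \le C|u|^{-\nu-1}|\log u|^k$ together with elementary estimates yields a contribution of order $|z|^{-\nu}|\log z|^k$ when $\nu \neq 0$. On the vertical tail $\gamma_z^2(t) = z - \sigma(z) + \ic t$, $t \le 0$, the choice of sign is dictated by the kernel: for $\nu > 0$ take $\Delta^{-1}_+$, where $|1 - e^{-2\pi \ic (u-z)}|$ stays bounded away from zero; for $\nu < 0$ take $\Delta^{-1}_-$, where the factor $|e^{2\pi \ic (u-z)}|^{-1} \sim e^{2\pi t}$ provides exponential decay in $|t|$. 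In both sub-cases the tail integral is bounded by the desired $O(|z|^{-\nu}|\log z|^k)$ uniformly in $\g$ and $\r \ge \r_0$.

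The main obstacle, and the origin of the extra $\log$ in the statement, is the borderline case $\nu = 0$. Here the $\gamma_z^1$ contribution can be parametrized and bounded as $\int_0^1 |\gamma_z^1(t)|^{-1}|\log \gamma_z^1(t)|^k |z|\, dt \sim |\log z|^{k+1}$ by splitting the $t$-interval at $t \sim 1/|z|$, producing an extra $\log|z|$ factor beyond the $|\log z|^k$ inherited from $h$; the tail $\gamma_z^2$ is still absorbed by the exponential decay of the $\Delta^{-1}_-$ kernel. This is precisely the statement that $\Delta^{-1}_-$ maps $\EInt^{\log}_{1,k,\g,\r}$ into $\EInt^{\log}_{0,k+1,\g,\r}$ and accounts for the loss of one power of the decay weight in exchange for one extra power of the logarithmic weight claimed in the proposition.
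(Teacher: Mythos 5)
Your overall strategy (residue identity for $\Delta\circ\Delta^{-1}_{\pm}=\mathrm{Id}$, then path-splitting $\gamma_z=\gamma_z^1\cup\gamma_z^2$ for the bounds) is the same as the paper's, which follows \cite{FS01}; the paper actually omits the norm computations as ``tedious'', so your attempt to supply them is where the comparison matters — and there is a genuine gap there. Your estimate on $\gamma_z^1$ for $\nu>0$ is wrong as stated: boundedness of the kernel plus $|h(u)|\le C|u|^{-\nu-1}|\log u|^k$ plus length $O(|z|)$ gives
\begin{equation*}
\int_{\gamma_z^1}|h(u)|\,|du|\;\lesssim\;\int_{\r+\g}^{|z|}s^{-\nu-1}\log^k s\,ds\;=\;O(1),
\end{equation*}
not $O(|z|^{-\nu}\log^k|z|)$, because the path starts at the \emph{fixed} point $-\ic(\r+\g)$ and the portion of $\gamma_z^1$ near that point contributes a constant independent of $z$. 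So for $\nu>0$ the output would fail to decay. The correct mechanism is the exponential smallness of the $\Delta^{-1}_+$ kernel along $\gamma_z^1$ away from its endpoint: on $\gamma_z^1$ one has $\im(u-z)=(1-t)\,(-(\r+\g)-\im z)\ge 0$, so $|1-\ee^{-2\pi\ic(u-z)}|\ge \ee^{2\pi(1-t)|\r+\g+\im z|}-1$, and this exponential localizes the integral to $u\approx z$, where $|h(u)|\approx|z|^{-\nu-1}\log^k|z|$. This is not a cosmetic point: $\Delta^{-1}_-h-\Delta^{-1}_+h=-\int_{\gamma_z}h(u)\,du$, which for $\nu>0$ is a nonzero constant, so any argument (like yours) that treats both kernels as merely ``bounded on $\gamma_z^1$'' cannot possibly yield the decay $|z|^{-\nu}$, since it would apply equally to $\Delta^{-1}_-$, whose output tends to a constant. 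Your estimates for $\nu\le 0$ on $\gamma_z^1$ (including the $\log^{k+1}$ loss at $\nu=0$) and for the tails $\gamma_z^2$ are fine.

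Two smaller issues. First, in the residue step for $\Delta^{-1}_+$ you claim the closing segment at $\im u\to-\infty$ dies because $|\ee^{-2\pi\ic(u-z)}|\to 0$; but that makes the kernel tend to $1$, not to $0$ — the segment vanishes because $h$ itself decays (which needs $\nu+1>0$, guaranteed here since $\nu>0$). Second, your kernel bounds on both pieces of the path require $\sigma(z)$ bounded away from $0$ (and from $1$), which fails near the boundary of $E_{\g,\r}$; the paper handles this by proving the bounds on the smaller domain $\widetilde E_{\g,\r+\g}$, where one may take $\sigma(z)=1/2$, and then propagating to all of $E_{\g,\r}$ via $\fin(z)=\fin(z\pm1)\mp h(z)$. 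You should incorporate that reduction, or otherwise control $\sigma(z)$ uniformly.
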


\begin{proof}
Along this proof we will denote by $K$ a generic constant depending
only on~$\g$ and~$\nu$. We will skip $\g,\r_0$ and $\r$ from our
notation of the Banach spaces and norms.

We fix $\nu\in \mathbb{R}$, $\g>0$ fulfilling the hypotheses of Proposition \ref{invDeltainner} and $\r_0\leq \r$ big enough. Let $\iff\in \EInt_{\nu+1,k}^{\log}$,
and we introduce $\fin=\Delta^{-1}(h)$.
Our first observation is that $\fin$ is an analytic function defined in $E_{\g,\r}$. Indeed, for any $\sigma_0>0$ we define the set
\begin{equation*}
\Omega_{\sigma_0}=\{ u \in E_{\g,\r} : u-\sigma_0 \in E_{\g,\r}\}.
\end{equation*}
We emphasize that $E_{\g,\r} = \cup_{\sigma_0>0} \Omega_{\sigma_0}$.
Moreover we note that, if $z\in \Omega_{\sigma_0}$ we can take
$\sigma(z) = \sigma_0$ in the expression~\eqref{deffinnunegpos}
of~$\fin(z)$. Henceforth, in order to deduce that $\fin$ is an
analytic function in $\Omega_{\sigma_0}$, we only have to study the
convergence of
\begin{equation*}
\int_{\gamma_{z}^2} \frac{\iff(u)}{\ee^{\mp 2\pi \ic(u-z)}-1}\, du
\end{equation*}
To this end, we observe that $ |\ee^{\mp 2\pi \ic (\gamma^{2}_z -z)}
-1| \geq \left | \ee^{\pm t 2\pi |\r + \g+\im z|} -1|\right . $ and
that $ |h(\gamma_z^2(t))|\leq C(z)  |t|^{-\nu-1}\log^k (|t|) $ for
some function $C(z)$. Therefore, if $\nu \leq 0$ and $t\in
(-\infty,0]$,
\begin{equation*}
\frac{|h(\gamma_z^2(t))|}{|\ee^{2\pi \ic (\gamma^{2}_z -z)} -1|}  \leq 2 C(z)|t|^{-\nu-1}\log^k (|t|) \ee^{t 2\pi |\r + \im z|}
\end{equation*}
and we are done for the case $\nu \leq 0$. The case $\nu>0$ can be done analogously.

Now we are going to check that $\Delta^{-1}$ is a right inverse of the operator $\Delta$. We take into account that, if $z,z+1\in E_{\g,\r}$
\begin{equation*}
\Delta \fin (z) = \mp \int_{\g_{z+1} - \g_{z}} \frac{\iff(u)}{\ee^{\mp 2\pi \ic(u-z)}-1} \,du
\end{equation*}
and therefore, since the only singularity of $\mp \iff(u)/(\ee^{\mp 2\pi \ic(u-z)}-1)$ is $u=z$ and it is a simple pole with residue $\iff(z)/2\pi \ic$,
we have that both $\fin_{\pm}$ are solution of $\Delta(\fin)=h$ defined in the complex domain $E_{\g,\r}$. Here we have proceed exactly as in \cite{FS01}.

It only remains to prove that $\fin=\Delta^{-1}(h)\in  \EInt_{\nu,k}^{\log}$ provided $h\in  \EInt_{\nu+1,k}^{\log}$.
We restrict ourselves to the complex domain $\widetilde{E}_{\g,\r+\g}\subset E_{\g,\r}$ defined by:
\begin{equation*}
\widetilde{E}_{\g,\r'} = D_{\g,\r'}^{\u} \cap D_{\g,\r'}^{\s} \cap \{ z\in \C:  \im z <0\}.
\end{equation*}
We notice that, if the following bounds are proved,
\begin{align}
|z^{\nu} (\log z)^{-k} \fin(z)| &\leq K \Vert h \Vert^{\log}_{\nu+1,k}\;\;\;\; z\in \widetilde{E}_{\g,\r+\g} & \text{if}\;\;\;\; \nu\neq 0 \label{boundfindeltainvnu} \\
|(\log z)^{-k-1} \fin(z)| &\leq K \Vert h \Vert^{\log}_{1,k} \;\;\;\; z\in \widetilde{E}_{\g,\r+\g} & \text{if}\;\;\;\; \nu = 0\label{boundfindeltainvnu0}
\end{align}
the same statement holds for $z\in E_{\g,\r}$. Indeed, assume that bounds \eqref{boundfindeltainvnu} and \eqref{boundfindeltainvnu0} are satisfied and let
$z\in E_{\g,\r} \backslash \widetilde{E}_{\g,\r,\g}$. We have two cases, $\re z\leq 0$ and $\re z >0$.
On the one hand, if $\re \leq 0$, it is clear that $z+1\in \widetilde{E}_{\g,\r+\g}$ and that
$
\fin(z) = \fin(z+1)- h(z)
$. On the other hand, if $\re z>0$, $z-1\in \widetilde{E}_{\g,\r+\g}$ and consequently $\fin(z) = \fin(z-1) + h(z)$.
In any case, $|\fin(z)|\leq |\fin(z\pm 1)| + |h(z)|$. Here we have used that $\Delta(\fin)=h$.
Therefore, if $\nu \neq 0$, using bound \eqref{boundfindeltainvnu}, we obtain
\begin{equation*}
|\fin(z)| \leq K \Vert h \Vert_{\nu,k}^{\log} \big (|z\pm 1|^{-\nu} |\log (z\pm 1)|^{k}\big ) + |z|^{-\nu-1} |\log z|^{k} \big)
\leq K \Vert h \Vert_{\nu,k}^{\log}|z|^{-\nu} |\log z|^{k}
\end{equation*}
and the result is proved for $\nu \neq 0$.  Analogously we check the result for $\nu =0$.

The proof of bounds \eqref{boundfindeltainvnu} and \eqref{boundfindeltainvnu0} is easy but it requires tedious computations
which will be omit here. Nevertheless, we point out that for any fixed $z\in \widetilde{E}_{\g,\r+\g}$ we can take $\sigma(z)=1/2$
in the definition \eqref{path} of $\gamma_z$.
\end{proof}

\subsection{Two independent solutions of the linear equation \eqref{eqdif}}

We recall that $\difin = \phi^{\u} - \phi^{\s}$ satisfies equation \eqref{eqdif}:
\begin{equation}\label{eqdifinner}
\Delta^{2} \difin  = (-D\gg(\phi^{\s}) + \UR) \difin.
\end{equation}
The following lemma states the properties of $\UR$ we will need. Its proof is completely
analogous to the one of bound~\eqref{D2gR} provided $\phi^{\u} - \phi^{\s} \in \EInt_{r+2,\g,\r}$.
\begin{lemma} \label{Edif}Let $\g$ and $\r$ satisfying the conclusions of
Proposition~\ref{prop:existence} and $\UR$ be the function defined in \eqref{defBinner}.
We have that $\UR \in \EInt_{r+6-\eH,\g,\r}$.
\end{lemma}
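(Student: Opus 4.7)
The plan is to imitate the derivation of bound \eqref{D2gR} from the proof of Proposition~\ref{existenceproposition}, but now applied on the intersection domain $E_{\gamma,\rho}$ where both $\phi^{\u}$ and $\phi^{\s}$ are simultaneously defined. First I would invoke Proposition~\ref{prop:existence} to write $\phi^{\u,\s}=\phi_0+\psi^{\u,\s}$ with $\psi^{\u,\s}\in\Ein_{r+2,\g,\r}^{\u,\s}$, so that, by restriction to $E_{\g,\r}\subset D_{\g,\r}^{\u}\cap D_{\g,\r}^{\s}$, one has $\phi^{\u}-\phi^{\s}=\psi^{\u}-\psi^{\s}\in\EInt_{r+2,\g,\r}$. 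In particular, for every $\lambda\in[0,1]$ the convex combination satisfies $\phi^{\s}+\lambda(\phi^{\u}-\phi^{\s})=\phi_0+\mathcal O(|z|^{-(r+2)})$, so for $\r$ large enough it stays inside the disk where $g$ is analytic, and it obeys the same kind of pointwise estimate as $\phi_0$ itself (items (1)--(2) of Corollary~\ref{cor:firstaproximation}).

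Next I would estimate $D^2 g$ along this curve, separating the two cases. In the polynomial case, the Cauchy bound \eqref{Cauchyestimates} gives $|D^2 g(y)|\le K|y|^{n-2}$; evaluating at $y=\phi^{\s}+\lambda(\phi^{\u}-\phi^{\s})=\mathcal O(|z|^{-r})$ yields
\[
|D^2 g(\phi^{\s}+\lambda(\phi^{\u}-\phi^{\s}))|\le K|z|^{-r(n-2)}=K|z|^{-(2-r)},
\]
where the last equality uses $nr=r+2$. In the trigonometric case, the same Cauchy trick applied to $g(y)=-\ee^{(n-1)y}+\G(\ee^y)$ gives $|D^2 g(y)|\le K|\ee^{(n-1)y}|$, and plugging in $\phi_0=\tfrac{1}{n-1}\log(-\tfrac{2}{n-1}z^{-2})+\gen_r$ together with the small perturbation produces $|D^2 g|\le K|z|^{-2}$.

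Putting the two ingredients together in the definition \eqref{defBinner} of $\UR$ and using Lemma~\ref{Banachspacelemmainner},
\[
|\UR(z)|\le\Bigl(\sup_{\lambda\in[0,1]}|D^2 g(\phi^{\s}+\lambda(\phi^{\u}-\phi^{\s}))(z)|\Bigr)\,|\phi^{\u}(z)-\phi^{\s}(z)|,
\]
which in the polynomial case gives $|\UR(z)|\le K|z|^{r-2}\cdot|z|^{-(r+2)}=K|z|^{-4}$, and in the trigonometric case $|\UR(z)|\le K|z|^{-2}\cdot|z|^{-(r+2)}=K|z|^{-(r+4)}$. Both cases are summarized by the single bound $|\UR(z)|\le K|z|^{-(r+6-\eH)}$ with $\eH$ as defined in \eqref{defeH}, which is exactly the statement $\UR\in\EInt_{r+6-\eH,\g,\r}$.

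No genuine obstacle arises: the only delicate point is keeping track of the powers of $|z|$ in the two cases simultaneously and checking that the combination $r+6-\eH$ really matches $4$ in the polynomial setting and $r+4$ in the trigonometric one. Both identities follow from the definitions $\eH=r+2$ (polynomial) and $\eH=2$ (trigonometric), combined with $nr=r+2$, so the argument is entirely a bookkeeping exercise built on the Cauchy estimates already used in Section~\ref{subsec:fixedpointinner}.
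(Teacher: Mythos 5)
Your argument is correct and is essentially the paper's own: the paper dispatches this lemma by noting that the proof is "completely analogous to the one of bound~\eqref{D2gR}", i.e.\ exactly the combination of the Cauchy estimate $|D^2\gg(y)|\le K|y|^{n-2}$ (resp.\ $K|\ee^{(n-1)y}|$) along the segment joining $\phi^{\s}$ and $\phi^{\u}$ with the bound $\phi^{\u}-\phi^{\s}\in\EInt_{r+2,\g,\r}$ coming from Proposition~\ref{prop:existence}. Your bookkeeping ($4$ in the polynomial case, $r+4$ in the trigonometric case, both equal to $r+6-\eH$) matches the statement.
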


As we did in Section~\ref{subsec:linearizatedequation}, we split
\begin{equation*}
-D\gg(\phi^{s}) +\UR= H + \IL
\end{equation*}
where $H$ was defined in~\eqref{linealequationH}, $\IL \in
\EInt_{r+2}$, if $n\neq 3$, and $ \IL\in \EInt_{r+2,1}^{\log}$, if
$n=3$. We rewrite equation~\eqref{eqdifinner} as
\begin{equation*}
\Delta^2 (\difin) -H \cdot \difin = \IL\cdot \difin.
\end{equation*}

A solution of the homogeneous equation $ \Delta^2 (\fin)  =H\cdot
\fin$ is $\vars(z) = z^{\eH}$. The function
\begin{equation*}
\varp(z) = z^{\eH} \Delta^{-1}\left (\frac{1}{\vars(z+1) \vars(z)}\right ) \in \EInt_{\eH-1}
\end{equation*}
is another solution satisfying~$W(\varp,\vars)=1$.

By using these decomposition as well as
Proposition~\ref{invDeltainner} for the operator~$\Delta^{-1}$, we
can obtain solutions of the non homogeneous linear equation
$\Delta^2(\fin) -H\cdot \fin = h$.
\begin{lemma}\label{invlopinlemma} For any $\g>0$ there exists $\r_0>0$
large enough such that for any $\r\geq \r_0$, the operator
$\lopin(\fin) = \Delta^{2}(\fin) - H\cdot \fin$ has right inverse
defined  in $E_{\g,\r}$:
\begin{equation}
\inlopin(h)= \varp \cdot \Delta^{-1} (\vars \cdot h) - \vars \cdot \Delta^{-1} (\varp \cdot h).
\end{equation}
There exists $C>0$ such that for any $\alpha \in \mathbb{R}$ and
$h\in \EInt_{\alpha+2,\g,\r}$, we have:
\begin{enumerate}
\item If $\alpha \neq \eH -1$ and $\alpha \neq -\eH$ then $\inlopin(h) \in \EInt_{\alpha,\g,\r}$ and
$\Vert \inlopin(h) \Vert_{\alpha,\g,\r} \leq C \Vert h \Vert
_{\alpha+2,\g,\r}$.
\item If either $\alpha=\eH -1$ or $\alpha=-\eH$, $\inlopin(h) \in \EInt_{\alpha,1,\g,\r}^{\log}$ and
$\Vert \inlopin(h) \Vert_{\alpha,1,\g,\r}^{\log} \leq C \Vert h\Vert _{\alpha+2,\g,\r}$.
\end{enumerate}
\end{lemma}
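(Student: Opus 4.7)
The proof proposal is as follows.

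The strategy is the standard variation of constants for second order difference equations, combined with the Banach space calculus established in Proposition~\ref{invDeltainner} and Lemma~\ref{Banachspacelemmainner}. First I would verify that the given formula produces a solution of $\lopin(\fin)=h$. A direct computation, using that $\Delta^{-1}$ is a right inverse of $\Delta$, yields
\begin{align*}
\Delta(\inlopin(h))(z) &= \Delta(\varp)(z)\,\Delta^{-1}(\vars\cdot h)(z+1) + \varp(z)\,\vars(z)\,h(z) \\ &\quad - \Delta(\vars)(z)\,\Delta^{-1}(\varp\cdot h)(z+1) - \vars(z)\,\varp(z)\,h(z),
\end{align*}
and iterating once more and using $W(\varp,\vars)=\vars(z+1)\varp(z)-\vars(z)\varp(z+1)=1$ (note: $W(\varp,\vars)=1$ in the convention of the paper), one obtains $\Delta^2(\inlopin(h))-H\cdot\inlopin(h)=h$. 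This step is essentially algebraic and mirrors the analogous computation carried out for $\inlop$ in Lemma~\ref{inloplemma}, the only difference being that here the right inverse of $\Delta$ is the one constructed in Proposition~\ref{invDeltainner} rather than the one from Lemma~\ref{lemmaDelta1}.

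Next I would track the decay indices through the two terms. Since $\vars(z)=z^{\eH}$ and $h\in\EInt_{\alpha+2}$, Lemma~\ref{Banachspacelemmainner} gives $\vars\cdot h\in\EInt_{\alpha+2-\eH}$; applying Proposition~\ref{invDeltainner} with $\nu+1=\alpha+2-\eH$, i.e. $\nu=\alpha+1-\eH$, I obtain $\Delta^{-1}(\vars\cdot h)\in\EInt_{\alpha+1-\eH}$ whenever $\alpha\neq\eH-1$, and $\Delta^{-1}(\vars\cdot h)\in\EInt^{\log}_{0,1}$ when $\alpha=\eH-1$. Multiplying by $\varp\in\EInt_{\eH-1}$ and using the product estimate yields $\varp\cdot\Delta^{-1}(\vars\cdot h)\in\EInt_{\alpha}$ (resp.\ $\EInt^{\log}_{\alpha,1}$ in the exceptional case). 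Similarly, $\varp\cdot h\in\EInt_{\alpha+\eH+1}$, and applying Proposition~\ref{invDeltainner} with $\nu=\alpha+\eH$ gives $\Delta^{-1}(\varp\cdot h)\in\EInt_{\alpha+\eH}$ if $\alpha\neq-\eH$, and $\Delta^{-1}(\varp\cdot h)\in\EInt^{\log}_{0,1}$ if $\alpha=-\eH$. Multiplying by $\vars$ produces a second contribution in $\EInt_{\alpha}$ (resp.\ $\EInt^{\log}_{\alpha,1}$). Combining both terms gives the stated mapping properties, and the corresponding operator norm bounds follow immediately by the product inequality together with the constant $C$ provided by Proposition~\ref{invDeltainner}.

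The exceptional values $\alpha=\eH-1$ and $\alpha=-\eH$ correspond exactly to the borderline case $\nu=0$ of Proposition~\ref{invDeltainner} being triggered in one of the two summands, which is the reason a single $\log$ appears in those cases. Notice that these two values are distinct (since $\eH\geq 2$) so only one of the two summands generates a logarithm; the other summand stays in a pure polynomial decay class $\EInt_{\alpha}\subset\EInt^{\log}_{\alpha,1}$, so the sum still belongs to $\EInt^{\log}_{\alpha,1}$.

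The main obstacle, which is the only non-routine point, is making sure that the path-based definition of $\Delta^{-1}$ from Proposition~\ref{invDeltainner} is the \emph{same} $\Delta^{-1}$ for both summands (so that the algebraic cancellation in step 1 really takes place), and that the choice $\Delta^{-1}=\Delta^{-1}_{\pm}$ dictated by the sign of the index $\nu$ in Proposition~\ref{invDeltainner} is consistent in both terms; whenever the two indices force opposite signs one has to check that the solution obtained is the intended one, possibly up to an analytic $1$-periodic function which is absorbed in the classes $\EInt_{\alpha}$ or $\EInt^{\log}_{\alpha,1}$ thanks to the decay of $h$ on $E_{\g,\r}$. Everything else is a bookkeeping exercise on the indices of the Banach spaces.
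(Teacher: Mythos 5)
Your proposal is correct and follows exactly the route the paper intends: the paper omits the proof of this lemma, since it is the $E_{\g,\r}$-analogue of Lemma~\ref{inloplemma}, whose proof is the same variation-of-constants formula combined with the mapping properties of $\Delta^{-1}$ (here Proposition~\ref{invDeltainner} in place of Lemma~\ref{lemmaDelta1}) and the product estimates of Lemma~\ref{Banachspacelemmainner}. Your index bookkeeping, the identification of $\alpha=\eH-1$ and $\alpha=-\eH$ with the borderline case $\nu=0$ producing the single logarithm, and the observation that the two right inverses $\Delta^{-1}_{\pm}$ differ by a function annihilated by $\Delta$ (so either choice yields a right inverse of $\lopin$) are all accurate.
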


Next lemma provides a fundamental system of solutions of the linear
equation~\eqref{eqdifinner}.
\begin{lemma} \label{sollinearequationinner}
Let $\g>0$. There exists $\r_0$ large enough such that for any
$\r\geq \r_0$, the equation~\eqref{eqdifinner} has two independent
solutions, $\varpin$ and~$\varsin$, satisfying
\begin{align*}
\varpin(z)  &= \partial_z \phi^{\s}(z) + \varpin^1(z),\;\;\;\;\; \varpin^{1} \in \EInt_{r+3,\g,\r}, \\
\varsin(z)&= \frac{z^{\eH}}{r d_{\eH}(2\eH-1)} + \varsin^1(z),\;\;\;\;\; \varsin^1 \in \EInt^{\log}_{\nu,k,\g,\r}
\end{align*}
with $\nu=\min\{r-\eH,1-\eH\}$, $k=0$ if $n\neq 3$, $k=1$ if $n=3$ and  $d_{\eH}$ is defined in~\eqref{defeH}, .
\end{lemma}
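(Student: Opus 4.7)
The plan is to construct $\varpin$ and $\varsin$ as fixed points of operators built on the right-inverse $\inlopin$ of $\lopin = \Delta^2 - H$ provided by Lemma~\ref{invlopinlemma}.

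For $\varpin$, I would start from the observation that differentiating $\Delta^2 \phi^{\s} = -g(\phi^{\s})$ in $z$ gives $\Delta^2(\partial_z \phi^{\s}) = -Dg(\phi^{\s}) \partial_z \phi^{\s}$, so $\partial_z \phi^{\s}$ already solves \eqref{eqdifinner} up to the $\UR$-correction. Writing $\varpin = \partial_z \phi^{\s} + \varpin^{1}$ and using the splitting $-Dg(\phi^{\s}) + \UR = H + \IL$ yields
\[
\lopin \varpin^{1} = \IL \varpin^{1} + \UR \, \partial_z \phi^{\s},
\]
which via $\inlopin$ becomes the fixed-point equation $\varpin^{1} = \inlopin(\UR \partial_z \phi^{\s}) + \inlopin(\IL \varpin^{1})$. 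Corollary~\ref{cor:firstaproximation} combined with Proposition~\ref{prop:existence} puts $\partial_z \phi^{\s} \in \EInt_{\eH-1}$; together with Lemma~\ref{Edif} this gives $\UR \partial_z \phi^{\s} \in \EInt_{r+5}$, and Lemma~\ref{invlopinlemma} with $\alpha = r+3$ (which avoids the exceptional exponents $\eH-1$ and $-\eH$) places the non-homogeneous forcing in $\EInt_{r+3}$. The map $\fin \mapsto \inlopin(\IL \fin)$ is Lipschitz on $\EInt_{r+3}$ with constant $O(\rho^{-r})$ by Lemma~\ref{Banachspacelemmainner}, so contraction holds for $\rho$ large.

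For $\varsin$, the normalising constant $A = 1/(r d_\eH (2\eH-1))$ is forced by the requirement that the leading parts of $\partial_z \phi^{\s}$ (which starts as $-r d_\eH z^{-(\eH-1)}$ by Corollary~\ref{cor:firstaproximation}) and of $A z^{\eH}$ produce a discrete Wronskian equal to $-1$ at leading order; the expansion $z^{-(\eH-1)}(z+1)^{\eH} - (z+1)^{-(\eH-1)}z^{\eH} = 2\eH - 1 + O(z^{-1})$ pins down the factor $2\eH-1$. Writing $\varsin = A z^{\eH} + \varsin^{1}$ and using $\lopin(A z^{\eH}) = 0$ reduces the equation to $\lopin \varsin^{1} = \IL (A z^{\eH} + \varsin^{1})$, hence to the fixed-point problem $\varsin^{1} = \inlopin(\IL A z^{\eH}) + \inlopin(\IL \varsin^{1})$ to be solved in $\EInt^{\log}_{\nu, k}$. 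Because $\IL A z^{\eH} \in \EInt_{r+2-\eH}$ (with an extra $\log$ factor when $n=3$), Lemma~\ref{invlopinlemma} places $\inlopin(\IL A z^{\eH})$ in a suitable log-decorated space; the specific value $\nu = \min\{r - \eH,\, 1 - \eH\}$ is chosen so that the iterated operator $\inlopin \circ \IL$ maps $\EInt^{\log}_{\nu,k}$ into itself with Lipschitz constant $O(\rho^{-r})$ in all regimes, including the sub-cases where the intermediate exponent $\alpha = \nu + r$ lands on an exceptional value $\eH-1$ or $-\eH$ (as happens, e.g., for $n=2$ in the trigonometric case).

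The main obstacle is precisely this bookkeeping of logarithms: they may be introduced either through $\IL$ (when $n=3$) or through $\inlopin$ (at the exceptional exponents $\eH-1$ and $-\eH$), and one must check case by case that their cumulative effect is still absorbed by $\EInt^{\log}_{\nu,k}$ and does not spoil the contraction. Once this case analysis is carried out, both $\varpin^{1}$ and $\varsin^{1}$ follow from the Banach fixed-point theorem for $\rho$ sufficiently large.
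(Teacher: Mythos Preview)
Your treatment of $\varpin$ coincides with the paper's: both differentiate the invariance equation to see that $\partial_z\phi^{\s}$ solves the variational equation without the $\UR$-term, derive the equation $\lopin\varpin^{1}=\IL\,\varpin^{1}+\UR\,\partial_z\phi^{\s}$, and close a contraction on $\EInt_{r+3}$ via $\inlopin$.

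For $\varsin$ you take a genuinely different route. The paper does \emph{not} run a second fixed-point argument; having $\varpin$ in hand, it uses reduction of order, writing $\varsin=b\cdot\varpin$ with $\Delta b(z)=(\varpin(z+1)\varpin(z))^{-1}$. The leading part of the right-hand side is $z^{2\eH-2}/(r^2 d_{\eH}^2)$, which gives $b_0(z)=z^{2\eH-1}/(r^2 d_{\eH}^2(2\eH-1))$ and hence the leading term $z^{\eH}/(r d_{\eH}(2\eH-1))$ for $\varsin$; the remainder $b_1=b-b_0$ is obtained by a single application of the right inverse of $\Delta$ from Proposition~\ref{invDeltainner}, and the case split on $\nu,k$ drops out of the orders of the correction $\tilde S$. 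The trade-off is clear: the paper's approach automatically yields $W(\varpin,\varsin)=1$, avoids a second contraction, and reduces the bookkeeping to the first-order operator $\Delta$; your approach is more symmetric with the $\varpin$ construction but forces you to track when the intermediate exponent $\alpha=\nu+r$ hits the exceptional values $\eH-1$ or $-\eH$ of Lemma~\ref{invlopinlemma}, and to carry the $\log$ decorations through the iteration (and, strictly speaking, to invoke a $\log$-version of Lemma~\ref{invlopinlemma} for the $n=3$ case, which the paper also uses only implicitly). Both arguments are sound; the reduction-of-order route is shorter here.
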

\begin{proof}
First we look for $\varpin$. By construction, $\partial_z \phi^{\s}$
is a solution of the variational equation
$
\Delta^2 \fin = -D\gg(\phi^s) \fin,
$
therefore, the equation that $\varpin^1$ has to satisfy is
\begin{equation}\label{varinner1}
\Delta^2 (\fin)  - H\cdot \fin  =  \IL\cdot \fin + \UR\cdot
\partial_z \phi^{\s}.
\end{equation}
We look for $\varpin^1$ by means of the fixed point equation
\begin{equation}\label{varinnerfixedpoint}
\fin  = \inlopin (\UR \cdot \partial_z \phi^{\s}) + \inlopin (\IL \cdot \fin).
\end{equation}
We are interested in solutions belonging to $\EInt_{r+3}$. It is
enough to check that the norm of the linear operator $\mathcal{G}
:\EInt_{r+3} \to \EInt_{r+3}$ defined by $\mathcal{G}(\fin) =
\inlopin (\IL \cdot \fin)$ is less than one. This fact follows from
Lemma~\ref{invlopinlemma} together with the fact that $\UR\in
\EInt_{r+6-\eH}$, $\IL \in \EInt_{r+2}$ if $n\neq 3$ and $\IL \in
\EInt_{r+2,1}^{\log}$ if $n=3$.  One easily then deduces that
\begin{equation}\label{expresionvarpin1}
\varpin^1 = (\text{Id} - \mathcal{G})^{-1} \big (\inlopin(\UR \cdot \partial_z \phi^{\s})\big )\in \mathcal{Y}_{r+3}
\end{equation}
is a solution of equation \eqref{varinner1}.

Now we deal with the second solution of the
equation~\eqref{eqdifinner}. We observe that, since~$\varpin$ is a
solution, then the function~$\varsin = \bb \cdot \varpin$ is also a
solution of the linear equation~\eqref{eqdifinner} satisfying
$W(\varpin,\varsin)=1$, if and only if $\bb$ satisfies
\begin{equation*}
\Delta (\bb)(z) = \frac{1}{\varpin(z+1) \varpin(z)}.
\end{equation*}

By Proposition \ref{prop:existence}, $\varpin= \partial_z \phi^{\s}+
\varpin^1= \partial_z \fa+ \partial_z\f^{\s}+\varpin^1 $ where
$\partial_z \f^{\s}, \varpin^1 \in \EInt_{r+3}$. Moreover, using the
definitions of $d_{\eH},\eH$ in~\eqref{defeH} and
Corollary~\ref{cor:firstaproximation}, it is a direct computation to
check that $\bb$ has to satisfy the linear equation
\begin{equation*}
\Delta(\bb) (z) = \frac{z^{2\eH-2}}{r^2 d_{\eH}^2}  + S(z)
\end{equation*}
with $S\in \EInt_{-2\eH+3}$ if $n=2$, $S\in \EInt_{-2\eH+3,1}^{\log}$ if $n=3$ and $S\in \EInt_{-2\eH+2+r}$ if $n>3$.
We take
\begin{equation*}
\bb_0(z)=\frac{z^{2\eH-1}}{r^2 d_{\eH}^2 (2\eH-1)}
\end{equation*}
and we note that $r^2 d_{\eH}^2 \Delta (\bb_0)(z) = z^{2\eH-2} + \gen_{-2\eH+3}$. Henceforth, the difference $\bb_1 = \bb - \bb_0$ satisfies
an equation of the form
\begin{equation}\label{eqc1innern<3}
\Delta(\bb_1) = \tilde{S}(z)
\end{equation}
with $\tilde{S}\in \EInt_{-2\eH+3}$ if $n=2$, $\tilde{S}\in \EInt_{-2\eH+3,1}^{\log}$ if $n=3$ and $\tilde{S}\in \EInt_{-2\eH+2+r}$ if $n>3$.
Applying Proposition \eqref{invDeltainner} one has that equation \eqref{eqc1innern<3} has a solution $\bb_1$ belonging to
$\EInt_{-2\eH+2}$ if $n=2$, $\EInt_{-2\eH+2,1}^{\log}$ if $n=3$ and $\EInt_{-2\eH+1+r}$ if $n>3$ and the result follows.
\end{proof}

\subsection{A final formula for $\difin = \phi^{\u} - \phi^{\s}$}
Since $\difin$ is a solution of the linear homogenous difference equation \eqref{eqdifinner},
the general
theory allows us to write it as
\begin{equation}\label{formdifin1}
\difin (z) = p_1(z) \eta_1(z) + p_2(z) \eta_2(z)
\end{equation}
with $\eta_1,\eta_2$ two independent solutions of \eqref{eqdifinner}
and $p_1, p_2$ $1$-periodic, analytic functions in $E_{\g,\r}$.
Moreover, if $W(\eta_1,\eta_2)=1$, the functions $p_1$ and $p_2$ are
determined by
\begin{equation}\label{formulap}
p_1(z) = W(\difin,\eta_2)(z), \quad p_2(z) = -W(\difin,\eta_1)(z).
\end{equation}

\begin{lemma} \label{diflemma}
Let $\g,\r>0$ and $\eta_1,\eta_2$ two independent solutions of the linear difference equation \eqref{eqdifinner}
satisfying that $W(\eta_1,\eta_2)=1$ and that $\eta_1 \in \EInt_{r+1}$ and $\eta_2 \in \EInt_{-\eH}$.

Then there exist coefficients $p_1^k,p_2^k$ (depending on $\eta_{1,2}$) such that
\begin{equation}\label{expresiondifin}
\difin(z)= \eta_1(z) \sum_{k<0} p_1^{k} \ee^{2\pi \ic k z} + \eta_2(z)\sum_{k<0} p_2^{k} \ee^{2\pi \ic k z}.
\end{equation}
\end{lemma}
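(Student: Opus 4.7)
The plan is to prove the lemma in three steps: write $\difin$ as a linear combination of $\eta_1, \eta_2$ with $1$-periodic coefficients, show these coefficients vanish as $\im z \to -\infty$, and deduce via Fourier expansion that only negative modes survive. For the first step, since $\eta_1, \eta_2$ are independent solutions of~\eqref{eqdifinner} with $W(\eta_1, \eta_2) = 1$ and $\difin$ is also a solution, the general theory of second-order linear difference equations gives $\difin = p_1 \eta_1 + p_2 \eta_2$ with $p_1 = W(\difin, \eta_2)$ and $p_2 = -W(\difin, \eta_1)$, both analytic on $E_{\g, \r}$. Their $1$-periodicity is a one-line verification: $W(u,v)(z+1) = W(u,v)(z)$ whenever $u, v$ both satisfy $\Delta^2 \phi = F \phi$, as is the case for $\difin, \eta_1, \eta_2$.

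For the second step, I would rewrite the Wronskian as $W(u, v) = u\,\Delta v - v\,\Delta u$ to get
\[
p_1 = \difin\, \Delta \eta_2 - \Delta \difin \cdot \eta_2, \qquad p_2 = -\difin\, \Delta \eta_1 + \Delta \difin \cdot \eta_1,
\]
and bound each summand. From Proposition~\ref{prop:existence}, $\difin = \f^{\u} - \f^{\s} \in \EInt_{r+2}$, and from the hypotheses together with Lemma~\ref{sollinearequationinner}, $\eta_1 \in \EInt_{r+1}$, $\eta_2 \in \EInt_{-\eH}$, $\Delta \eta_1 \in \EInt_{r+2}$, and $\Delta \eta_2 \in \EInt_{1-\eH}$ (taking a first difference of an analytic function with power-law asymptotics gains one order of decay). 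The analogous improvement $\Delta \difin \in \EInt_{r+3}$ follows from a Cauchy estimate on $\partial_z \difin$ applied inside disks of radius comparable to $|z|$, which fit inside $E_{\g, \r}$ at points of bounded real part and very negative imaginary part. Collecting,
\[
|p_1(z)| = O(|z|^{\eH - r - 3}), \qquad |p_2(z)| = O(|z|^{-(2r+3)}),
\]
both of which tend to $0$ on $E_{\g, \r}$: $\eH - r - 3 = -1$ in the polynomial case ($\eH = r+2$) and $-r - 1$ in the trigonometric case ($\eH = 2$).

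For the third step, $E_{\g, \r}$ contains a full period at each height $y \leq -(\r + \g)$, so by $1$-periodicity $p_1, p_2$ extend to analytic $1$-periodic functions on the half-plane $\{\im z < -(\r + \g)\}$ and admit Fourier expansions $p_j(z) = \sum_{k \in \Z} p_j^k \ee^{2\pi \ic k z}$. For any sufficiently negative $y$,
\[
|p_j^k| \leq \ee^{2\pi k y}\, \sup_{x \in [0,1]} |p_j(x + \ic y)|.
\]
Letting $y \to -\infty$: for $k > 0$, the exponential prefactor kills $p_j^k$ against the bounded supremum; for $k = 0$, the supremum itself tends to $0$ by the second step. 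Hence $p_j^k = 0$ for all $k \geq 0$, leaving only the modes $k < 0$ and yielding~\eqref{expresiondifin}. The main obstacle is the one-order-better estimate $\Delta \difin \in \EInt_{r+3}$, needed to force $p_1^0 = 0$ in the polynomial case (where the naive bound on $\Delta \difin \cdot \eta_2$ gives only boundedness, not decay). Its resolution via Cauchy's estimate exploits the wedge geometry of $E_{\g, \r}$, where points with bounded real part and large negative imaginary part sit at distance of order $|z|$ from the boundary.
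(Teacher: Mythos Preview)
Your proposal is correct and follows essentially the same approach as the paper: write $\difin$ via the Wronskian formulas~\eqref{formulap} with $1$-periodic coefficients, use a Cauchy estimate to gain one order in $\Delta$ so that $p_1,p_2\to 0$ as $\im z\to -\infty$, and conclude by Fourier expansion that only the modes $k<0$ survive. The paper handles the Cauchy step by shrinking to $E_{2\g,2\r}$, whereas you (equivalently) restrict by periodicity to bounded real part; your exponent $2r+3$ for $p_2$ is slightly weaker than the $2r+4$ your own estimates give, but this is immaterial.
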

\begin{proof}
We first point out that, we already know that $\difin =\phi^{\u}-
\phi^{\s} = \f^{\u}- \f^{\s} \in \EInt_{r+2,\g,\r}$ provided that,
by Theorem~\ref{existencetheorem}, $\f^{\u,\s} \in
\Ein^{\u,\s}_{r+2}$. In addition, if $h \in \EInt_{\nu,\g,\r}$, then
$\Delta (h) \in \EInt_{\nu+1,2\g,2\r}$. Indeed, standard arguments
can be used to prove that, if  $h\in \EInt_{\nu,\g,\r}$ then
$\partial_z h \in \EInt_{\nu+1,2\g,2\r}$ (see for
instance~\cite{Bal06}). Therefore, if $z,z+1\in E_{2\g,2\r}$,
\begin{equation*}
|h(z+1) - h(z)| \leq \Vert \partial_z h\Vert_{\nu+1} \int_{0}^1
\frac{1}{|z+t|^{\nu+1}} \leq K \Vert \partial_z h \Vert_{\nu+1} \frac{1}{|z|^{\nu+1}}.
\end{equation*}
Using the above property, that $\difin \in \EInt_{r+2}$ and formula
\eqref{formulap} for $p_1,p_2$, one has that $p_1 \in \EInt_{1}$ and
$p_2 \in \EInt_{r+4}$. In particular, $p_1,p_2 \to 0$ as $\im z \to
-\infty$ and since they are $1$-periodic:
\begin{equation*}
p_1(z) = \sum_{k<0} p_1^{k} \ee^{2\pi \ic k z},\;\;\;\;\;p_2(z) =
\sum_{k<0} p_2^{k} \ee^{2\pi \ic k z}
\end{equation*}
ant the lemma is proved.
\end{proof}

We recall that the existence of independent solutions of the linear
difference equation~\eqref{eqdifinner} satisfying the hypotheses of
Lemma~\ref{diflemma} is guaranteed by
Lemma~\ref{sollinearequationinner}. Henceforth Lemma~\ref{diflemma}
applied to $\varpin, \varsin$ already gives an expression of
$\difin$ which is exponentially small. Among other things, we have
proved that there exist $\g,\r>0$ such that
\begin{equation}\label{bounddifexp1}
|\ee^{2\pi \ic z} z^{-\eH} (\phi^{\u}(z)-\phi^{\s}(z))| \leq K\;\;\;\;\;\;\; z\in E_{\g,\r}.
\end{equation}

Nevertheless we have not proved
Theorem \ref{differencetheorem} yet. We need to look for more suitable independent solutions of \eqref{eqdifinner}
to apply Lemma \ref{diflemma}.

\begin{corollary}\label{corollaryleinner}
Let $\g>0$. There exists $\r_0>0$ big enough such that for any $\r\geq \r_0$, equation \eqref{eqdifinner} has two fundamental solutions of the form
\begin{align*}
\varpine(z)  &= \partial_z \phi^{\s}(z) + \varpine^1(z), \\
\varsine(z) &= \frac{z^{\eH}}{r d_{\eH}(2\eH-1)}(z) + \varsine^1(z),
\end{align*}
with with $d_{\eH}$ defined in~\eqref{defeH} and $\varpine^1$ satisfying
\begin{equation*}
\sup_{z\in E_{\g,\r}} | z^{1-\eH} \ee^{2\pi \ic z} \varpine^1(z)|<+\infty.
\end{equation*}
In addition, $\varsine^{1} \in \EInt_{r-\eH,\g,\r}$ if $n>3$, $\varsine^1 \in \EInt_{1-\eH,\g,\r}$ if $n=2$
 and $\varsine^1 \in \EInt_{1-\eH,1,\g,\r}^{\log}$ if $n=3$.
\end{corollary}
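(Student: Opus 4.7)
The proof refines the construction of Lemma~\ref{sollinearequationinner}, the crucial new input being the exponential smallness of $\difin=\phi^{\u}-\phi^{\s}$ already at our disposal. Applying Lemma~\ref{diflemma} to the solutions $\varpin,\varsin$ of Lemma~\ref{sollinearequationinner} gives \eqref{bounddifexp1}: $|\difin(z)|\le K|z|^{\eH}|\ee^{2\pi\ic z}|$ on $E_{\g,\r}$. Combined with the estimate $|D^{2} g(\phi^{\s}+\lambda\difin)|\lesssim|z|^{-2+r}$ obtained exactly as in the proof of~\eqref{D2gR}, the definition of $\UR$ in~\eqref{defBinner} gives $|\UR(z)|\le K|z|^{r+\eH-2}|\ee^{2\pi\ic z}|$, so the nonhomogeneous term that will drive the construction of $\varpine$ is already exponentially small. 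To exploit this I will introduce the exponentially weighted Banach spaces
\[
\widetilde{\EInt}_{\alpha} = \Big\{f:E_{\g,\r}\to\C\ :\ \sup_{z\in E_{\g,\r}}|z|^{\alpha}|\ee^{2\pi\ic z}f(z)|<+\infty\Big\}
\]
and prove the analogue of Lemma~\ref{invlopinlemma} in these spaces. The key is the conjugation identity
\[
\Delta^{-1}_{+}(\ee^{-2\pi\ic u}\tilde h(u))(z) = \ee^{-2\pi\ic z}\,\Delta^{-1}_{-}(\tilde h)(z),
\]
verified by a short direct computation, which together with Proposition~\ref{invDeltainner} yields $\Delta^{-1}_{+}:\widetilde{\EInt}_{\alpha+1}\to\widetilde{\EInt}_{\alpha}$ for $\alpha\le 0$, uniformly in~$\r$. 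The same formula used in Lemma~\ref{invlopinlemma}, now with $\Delta^{-1}_{+}$ throughout, then gives a right inverse $\inlopin:\widetilde{\EInt}_{\alpha+2}\to\widetilde{\EInt}_{\alpha}$ for the relevant range of~$\alpha$.

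With these tools I rerun the fixed point argument of Lemma~\ref{sollinearequationinner} in the space $\widetilde{\EInt}_{1-\eH}$: writing $\varpine=\partial_z\phi^{\s}+\varpine^{1}$, the correction $\varpine^{1}$ solves
\[
\varpine^{1} = \inlopin(\UR\cdot\partial_z\phi^{\s}) + \inlopin(\IL\cdot\varpine^{1}).
\]
Combining $|\partial_z\phi^{\s}|\lesssim|z|^{-r-1}$ with the bound on $\UR$ places the nonhomogeneous term in $\widetilde{\EInt}_{1-\eH}$ (since $\UR\cdot\partial_z\phi^{\s}\in\widetilde{\EInt}_{3-\eH}$), while $\IL\in\EInt_{r+2}$ (respectively $\EInt^{\log}_{r+2,1}$ for $n=3$) makes $\fin\mapsto\inlopin(\IL\cdot\fin)$ a contraction of norm $O(\r^{-r+\nu_r})$ on this space, exactly as in the proof of Proposition~\ref{existenceproposition}. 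Taking $\r$ large produces the unique $\varpine^{1}\in\widetilde{\EInt}_{1-\eH}$, which is the claimed exponentially small correction.

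For $\varsine$ I set $\varsine=\bb\cdot\varpine$ with $\Delta\bb=1/(\varpine(z)\varpine(z+1))$, so that $W(\varpine,\varsine)=1$ automatically. Since $\varpine^{1}$ is exponentially small, the expansion of $1/(\varpine(z)\varpine(z+1))$ differs from that of $1/(\partial_z\phi^{\s}(z)\,\partial_z\phi^{\s}(z+1))$ only by exponentially small terms, and the calculation at the end of the proof of Lemma~\ref{sollinearequationinner} applies verbatim: $\bb_{0}(z)=z^{2\eH-1}/(r^{2}d_{\eH}^{2}(2\eH-1))$ absorbs the leading term $z^{2\eH-2}/(r^{2}d_{\eH}^{2})$, the remainder $\bb_{1}=\bb-\bb_{0}$ solves $\Delta\bb_{1}=\tilde S$ with $\tilde S$ in the polynomial class dictated by $n$, and Proposition~\ref{invDeltainner} places $\bb_{1}$ in $\EInt_{-2\eH+2}$, $\EInt^{\log}_{-2\eH+2,1}$ or $\EInt_{-2\eH+1+r}$ for $n=2$, $n=3$ and $n>3$ respectively. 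Multiplying by $\varpine$ and subtracting the leading term $z^{\eH}/(r d_{\eH}(2\eH-1))$ then yields $\varsine^{1}=\bb_{1}\varpine+\bb_{0}\varpine^{1}$ in the classes stated, the exponentially small contribution $\bb_{0}\varpine^{1}$ being dominated by the polynomial part.

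The main obstacle is precisely the transfer of the mapping properties of $\Delta^{-1}$ from the polynomial spaces $\EInt_{\alpha}$ of Proposition~\ref{invDeltainner} to the exponentially weighted $\widetilde{\EInt}_{\alpha}$: the conjugation identity reduces everything essentially to Proposition~\ref{invDeltainner}, but careful bookkeeping is required both for the polynomial shifts in $\alpha$ and, in the case $n=3$, for the logarithmic factor carried by~$\IL$, which must be absorbed into a slightly worse exponent $-r+\nu_r$ in the contraction estimate.
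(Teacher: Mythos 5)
Your overall strategy is the same as the paper's: both arguments feed the exponential smallness of $\difin$ (hence of $\UR$) obtained from Lemma~\ref{diflemma} back into the fixed point construction of the first fundamental solution, and both obtain $\varsine$ from $\varpine$ by the variation-of-constants computation already carried out in Lemma~\ref{sollinearequationinner}. The paper implements the exponential weight by changing the unknown to $\spli(z)=\ee^{2\pi \ic z}\varpine^1(z)$ --- which commutes with $\Delta^2$, $H$ and $\IL$ because $\ee^{2\pi\ic z}$ is $1$-periodic --- and then applies Lemma~\ref{invlopinlemma} verbatim in the unweighted space $\EInt_{1-\eH}$ to the equation $\Delta^2(\spli)-H\spli=\IL\spli+\ee^{2\pi\ic z}\UR\,\partial_z\phi^{\s}$; you keep the unknown and conjugate the operator instead. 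These are the same idea, and your treatment of $\varsine$ matches the paper's.

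There is, however, a concrete error in your operator bookkeeping. For $h=\UR\cdot\partial_z\phi^{\s}\in\widetilde{\EInt}_{3-\eH}$ the term $\varp\cdot h$ of $\inlopin(h)$ lies in $\widetilde{\EInt}_{2}$, i.e.\ its polynomial order $\alpha+\eH=1$ is \emph{positive} and falls outside the range $\alpha\le 0$ covered by your conjugation identity together with Proposition~\ref{invDeltainner}. Indeed, by that very identity, applying $\Delta^{-1}_+$ to $\ee^{-2\pi\ic u}\tilde g$ with $\tilde g\in\EInt_{2}$ amounts to applying $\Delta^{-1}_-$ to $\tilde g$, and $\Delta^{-1}_-$ does not in general map $\EInt_{2}$ into $\EInt_{1}$: along $\gamma_z^1$ the kernel of $\Delta^{-1}_-$ tends to $-1$, so $\Delta^{-1}_-(\tilde g)(z)$ tends to a nonzero constant as $\im z\to -\infty$ and only lands in $\EInt_0$ --- this is precisely why Proposition~\ref{invDeltainner} switches to $\Delta^{-1}_+$ when $\nu>0$. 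As written, your construction therefore only gives $\vars\cdot\Delta^{-1}_+(\varp\cdot h)\in\widetilde{\EInt}_{-\eH}$, i.e.\ $\sup_{z\in E_{\g,\r}}|z^{-\eH}\ee^{2\pi\ic z}\varpine^1(z)|<+\infty$, one power of $z$ short of the claimed weight $z^{1-\eH}$. The repair is to use, on weighted functions of positive polynomial order, the genuinely conjugated operator $\ee^{-2\pi\ic z}\Delta^{-1}_+(\ee^{2\pi\ic z}\,\cdot\,)$ (whose kernel is neither of the two in~\eqref{deffinnunegpos}), or equivalently to change the unknown to $\spli=\ee^{2\pi\ic z}\varpine^1$ first and then invoke Lemma~\ref{invlopinlemma} with $\alpha=1-\eH$, as the paper does. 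A minor point in the same direction: your bounds $|\difin(z)|\le K|z|^{\eH}|\ee^{2\pi\ic z}|$ and $|\UR(z)|\le K|z|^{r+\eH-2}|\ee^{2\pi\ic z}|$ should carry $|\ee^{2\pi\ic z}|^{-1}$, since $|\ee^{2\pi\ic z}|$ is exponentially large on $E_{\g,\r}$.
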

\begin{proof}
As in the proof of Lemma \ref{sollinearequationinner} we write $\varpine^1 = \varpine-\partial_z\phi^{\s}$. We note that $\varpine^1$ satisfies
equation \eqref{varinner1}:
\begin{equation*}
\Delta^2 (\varpine^1) (z) - H(z) \varpine^1 (z) =  \IL(z) \varpine^1(z) + \UR(z) \partial_z \phi^{\s}(z).
\end{equation*}
We write $\spli (z)= \ee^{2\pi \ic z} \varpine^1(z)$ and we notice that $\spli$ has to satisfy the equation
\begin{equation}\label{eqeta}
\Delta^2 (\spli)(z) -H(z) \spli(z) = \IL(z) \spli(z) +  \ee^{2\pi
\ic z}\UR(z) \partial_z \phi^{\s}(z).
\end{equation}

We introduce $\varphi_0(z) = \ee^{2\pi \ic z}\UR(z)  \partial_z
\phi^{\s}(z)$. We first claim that $\varphi_0 \in \EInt_{3-\eH}$.
Indeed, we note that $\partial_z \phi^{\s} \in \EInt_{\eH-1}$ and we
recall that
\begin{equation}
\UR(z) = -\int_{0}^{1} (1-\lambda) D^2 \gg(\phi^{\s} + \lambda(\phi^{\u}- \phi^{\s}))\,d\lambda \big (\phi^{\u}-\phi^{\s}\big ).
\end{equation}
The claim follows from the facts that, by \eqref{bounddifexp1}, $|z^{-\eH} \ee^{2\pi \ic z}(\phi^{\u} - \phi^{\s})|$ is bounded and moreover
$|D^{2}\gg(\phi^{\s} + \lambda(\phi^{\u}- \phi^{\s}))| \leq K |z|^{\eH-4}$ if $z\in E_{\g,\r}$ (which can be proved as in~\eqref{D2gR}).


It is clear that, a particular solution $\spli$ of \eqref{eqeta} is
given by a solution of
\begin{equation*}
\spli = \inlopin(\varphi_0) + \mathcal{G}(\spli)
\end{equation*}
being $\mathcal{G}(\spli) = \inlopin (\IL \cdot \spli)$.

First we observe that, by Lemma ~\ref{invlopinlemma}, the
independent term  $\inlopin(\varphi_0)\in \EInt_{1-\eH}$. Secondly
we check that $(\text{Id} - \mathcal{G})$ is invertible in
$\EInt_{1-\eH}$. Let $\psi \in \EInt_{1-\eH}$.  Since $\IL \in
\EInt_{\frac{r}{2}+2}$ for any $n\geq 2$, we have that $\IL \cdot
\psi \in \EInt_{3+\frac{r}{2}-\eH}$ and consequently, by
Lemma~\ref{invlopinlemma}, $\mathcal{G}(\psi)\in
\EInt_{1+\frac{r}{2} -\eH}$ and moreover
\begin{equation*}
\Vert \mathcal{G}(\psi)\Vert_{1-\eH} \leq \r^{-r/2} \Vert \mathcal{G}(\psi)\Vert_{1+\frac{r}{2}-\eH} \leq \r_0^{-r/2} C \Vert \psi \Vert_{1-\eH} \leq \frac{1}{2} \Vert \psi \Vert_{1-\eH}.
\end{equation*}
This implies that the norm of the linear operator $\mathcal{G}: \EInt_{1-\eH} \to \EInt_{1-\eH}$ is less than one and therefore
$\text{Id} - \mathcal{G}$ is invertible.
To this end, we can write $\spli$ as
\begin{equation*}
\spli = \big(\text{Id}-\mathcal{G}\big)^{-1}(\inlopin(\varphi_0))
\end{equation*}
and we deduce that $\spli \in \EInt_{1-\eH}$ and $\Vert \spli \Vert_{1-\eH} \leq 2 \Vert \inlopin(\varphi_0) \Vert_{1-\eH}$ which implies the result for $\varpine$.

The existence and properties of $\varsine$ follow from the ones for
$\varsin$ in Lemma~\ref{sollinearequationinner}.
\end{proof}

\section*{Acknowledgments}
P. Mart\'{\i}n was supported in part by the Generalitat de Catalunya grant 2009SGR859 and by the Spanish
MCyT/FEDER grant MTM2009-06973. I. Baldom\'a was supported by
 the Spanish Grant MEC-FEDER MTM2006-05849/Consolider, the Spanish Grant MTM2010-16425 and the Catalan SGR grant 2009SGR859.

\bibliography{biblio_inner}
\end{document}